\documentclass[11pt]{amsart}

\usepackage{amsmath,amssymb}
\usepackage{url}
\usepackage{booktabs}
\usepackage{epsfig}
\usepackage{fullpage}

\theoremstyle{plain}
\newtheorem{theorem}{Theorem}[section]
\newtheorem{lemma}[theorem]{Lemma}

\newtheorem{prop}[theorem]{Proposition}
\newtheorem{conj}[theorem]{Conjecture}

\theoremstyle{definition}

\newtheorem{example}[theorem]{Example}
\newtheorem{remark}[theorem]{Remark}

\newcommand{\dqbin}[3]{\displaystyle\genfrac{[}{]}{0pt}{}{#1}{#2}_{#3}}
\newcommand{\tqbin}[3]{\textstyle\genfrac{[}{]}{0pt}{}{#1}{#2}_{#3}}

\DeclareMathOperator{\sw}{sw}
\DeclareMathOperator{\wt}{wt}

\newcommand{\ZZ}{\mathbb{Z}}

\newcommand{\Z}{\mathbb{Z}}

\newcommand{\FCHL}{\mathrm{FCHL}}

\newcommand{\N}{\mathrm{N}}
\newcommand{\D}{\mathrm{D}}
\newcommand{\E}{\mathrm{E}}
\newcommand{\HL}{\mathrm{HL}}
\newcommand{\EHKK}{\mathrm{EHKK}}
\newcommand{\LW}{\mathrm{LW}}
\newcommand{\NE}[1]{\mathrm{#1}}

\newcommand{\bounce}{\mathsf{bounce}}

\newcommand{\Area}{\mathsf{Area}}
\newcommand{\area}{\mathsf{area}}

\newcommand{\dinv}{\mathsf{dinv}} 

\newcommand{\ml}{\mathsf{ml}}

\newcommand{\zetamap}{\mathsf{zeta}}

\newcommand{\wdR}{\mathcal{R}^{\mathrm{word}}}   
\newcommand{\wdD}{\mathcal{D}^{\mathrm{word}}}   
\newcommand{\patR}{\mathcal{R}^{\mathrm{path}}} 
\newcommand{\patD}{\mathcal{D}^{\mathrm{path}}} 
\newcommand{\ptnR}{\mathcal{R}^{\mathrm{ptn}}} 
\newcommand{\ptnD}{\mathcal{D}^{\mathrm{ptn}}} 

\newcommand{\drewnu}{f}
\newcommand{\drewrho}{g} 
\newcommand{\newnu}{\tilde{f}}
\newcommand{\newrho}{\tilde{g}} 
\newcommand{\gmmap}{\mathrm{G}}

\DeclareMathOperator{\mkptn}{\textsc{ptn}}
\DeclareMathOperator{\mkwd}{\textsc{word}}
\DeclareMathOperator{\mkpath}{\textsc{path}} 
\DeclareMathOperator{\flip}{\mathsf{flip}}
\DeclareMathOperator{\rev}{\mathsf{rev}}
\DeclareMathOperator{\id}{\mathsf{id}}

\makeatletter
\def\imod#1{\allowbreak\mkern10mu({\operator@font mod}\,\,#1)}
\makeatother


\begin{document}

\title{Sweep maps: A continuous family of sorting algorithms}

\date{\today}

\author{Drew Armstrong}
\address{Dept. of Mathematics\\
  University of Miami \\
  Coral Gables, FL 33146}
\email{armstrong@math.miami.edu}

\author{Nicholas A. Loehr}
\address{Dept. of Mathematics\\
  Virginia Tech \\
  Blacksburg, VA 24061-0123 \\
and Mathematics Dept. \\
United States Naval Academy \\
Annapolis, MD 21402-5002}
\email{nloehr@vt.edu}

\thanks{This work was partially supported by a grant from the Simons
 Foundation (\#244398 to Nicholas Loehr).}

\author{Gregory S. Warrington}
\address{Dept. of Mathematics and Statistics\\
  University of Vermont \\
  Burlington, VT 05401}
\email{gregory.warrington@uvm.edu}

\thanks{Third author supported in part by National Science Foundation
  grant DMS-1201312.}

\vspace*{.3in}

\begin{abstract}
 We define a family of maps on lattice paths, called \emph{sweep maps},
 that assign levels to each step in the path and sort steps according to
 their level. Surprisingly, although sweep maps act by sorting, they
 appear to be bijective in general. The sweep maps give
 concise combinatorial formulas for the $q,t$-Catalan numbers, the
 higher $q,t$-Catalan numbers, the $q,t$-square numbers, and many
 more general polynomials connected to the nabla operator and rational
 Catalan combinatorics. We prove that many algorithms that have appeared
 in the literature (including maps studied by 
 Andrews, Egge, Gorsky, Haglund, Hanusa, Jones, Killpatrick, 
 Krattenthaler, Kremer, Orsina, Mazin, Papi, Vaill{\'e}, and the
 present authors) are all special cases of the sweep maps or their 
 inverses. The sweep maps provide a very simple unifying framework
 for understanding all of these algorithms.  
 We explain how inversion of the sweep map (which is an open
 problem in general) can be solved in known special cases by finding a 
 ``bounce path'' for the lattice paths under consideration.
 We also define a generalized sweep map acting on words over arbitrary 
 alphabets with arbitrary weights, which is also conjectured to be bijective.  
\end{abstract}

\maketitle

\section{Introduction}
\label{sec:intro}

This paper introduces a family of sorting maps on words that we call
\emph{sweep maps}.  In its simplest form, a sweep map $\sw_{r,s}$ uses
coprime parameters $r$ and $s$ to associate a \emph{level} $l_i$ to
each letter $w_i$ in a word $w=w_1w_2\cdots w_n$ consisting of $|s|$
copies of the letter $\N$ and $|r|$ copies of the letter $\E$.  (Note
that $r$ or $s$ may be negative.)  This assignment is done
recursively: use the convention that $l_0=0$; for $i\geq 1$ we set
$l_i = l_{i-1}+r$ if $w_i=\N$ and $l_i=l_{i-1}+s$ if $w_i=\E$.  The
word $\sw_{r,s}(w)$ is then obtained by sorting the letters in $w$
according to level, starting with $-1,-2,-3,\ldots$, then continuing
with $\ldots,2,1,0$.  Figure~\ref{fig:sweepmap1} provides an
example of $\sw_{5,-3}$ acting on the word $w=\NE{ENEENNEE}$.  (Here
we have identified $w$ with a lattice path in the plane: each $\N$
corresponds to a unit-length north step, while each $\E$ corresponds to
a unit-length east step.)

\begin{figure}[htbp]
  \centering 
      {\scalebox{.4}{\includegraphics{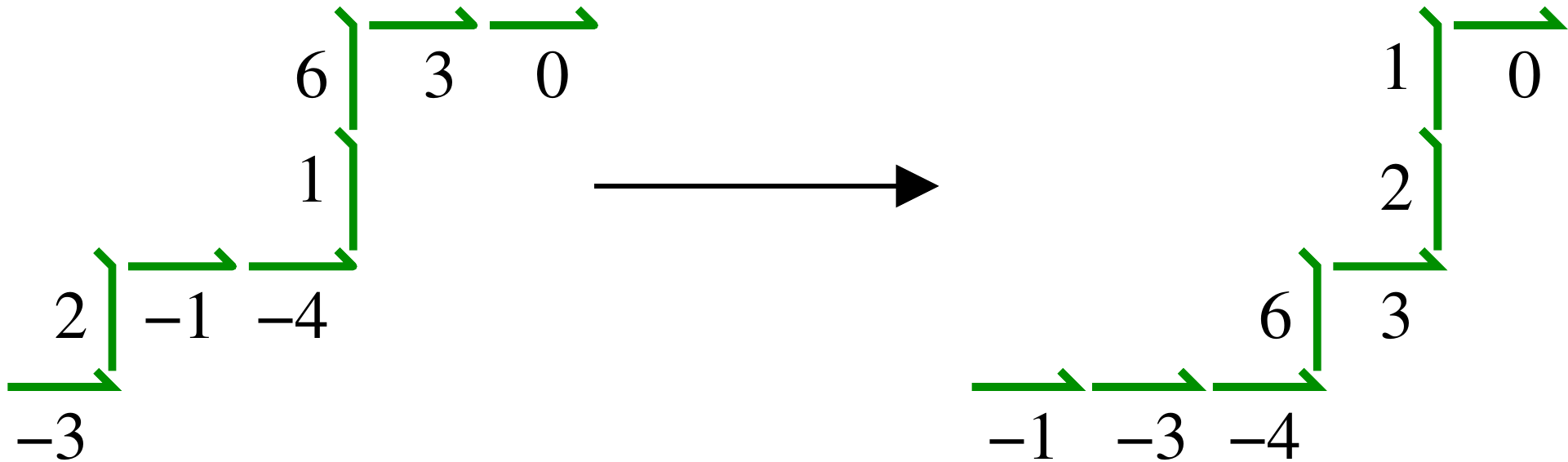}}}
      \caption{The action of the sweep map $\sw_{5,-3}$ on the word
        $w=\NE{ENEENNEE}$.  Next to each step in $w$ is written
        its level, $l_i$.  Each step in $\sw_{5,-3}(w)$ has been
        labeled by the level of the corresponding step in $w$.}
  \label{fig:sweepmap1}
\end{figure}

Surprisingly, even though sweep maps act by sorting, they are
(apparently) bijective.  The reader may find it useful to check this
bijectivity by hand for $\sw_{3,-2}$ acting on the set of all lattice
paths from $(0,0)$ to $(3,2)$.  
As detailed in Conjecture~\ref{conj:gen-sweep}, bijectivity seems to
hold even for the general sweep maps over arbitrary alphabets with
arbitrary weights, described in Section~\ref{subsec:gen}.  
The bijectivity of the general sweep maps appears
to be a very subtle and difficult fact.

\begin{remark}
  The order in which the levels are traversed is a key ingredient to
  bijectivity.  For example, in the case of $r=3$, $s=-2$,
  if we scan levels in the order $k=\ldots,2,1,0,-1,-2,\ldots$, both
  of the paths $\NE{NENEE}$ and $\NE{NEENE}$ map to $\NE{NNEEE}$.
\end{remark}

The sweep maps encode complex combinatorial information related to
$q,t$-Catalan numbers, the Bergeron-Garsia nabla operator, and other
constructions arising in representation theory, algebraic geometry,
and symmetric functions. Researchers have discovered special cases of
the sweep map in many different guises over the last fifteen years or
so.  One of the goals of this paper is to present a unifying framework
for all of this work.  In~\cite{loehr-mcat,loehr-thesis,loehr-trapz},
Loehr introduced bijections on $m$-Dyck paths, as well as
generalizations to lattice paths contained in certain trapezoids, that
turn out to be special cases of the sweep map.  The bijection in the
case $m=1$ also appears in a paper of Haglund and Loehr~\cite{HL-park}
and is foreshadowed by a counting argument in Haglund's seminal paper
on $q,t$-Catalan numbers~\cite[proof of Thm. 3.1]{Hag-bounce}.  The
inverse bijection in the case $m=1$ appears even earlier, where it was
used by Andrews et al.~\cite{AKOP-lie} in their study of
$ad$-nilpotent $\mathfrak{b}$-ideals in the Lie algebra
$\mathfrak{sl}(n)$.  (See also~\cite{vaille}.)  More recently, special
cases of the sweep map have arisen while studying lattice paths in
squares~\cite{LW-square}; partition statistics~\cite{LW-ptnid};
simultaneous core partitions~\cite{AHJ}; and compactified
Jacobians~\cite{GM-jacI,GM-jacII}.  We discuss a number of these
connections in more detail in Section~\ref{sec:alg-sweep}.

We suspect that to the typical mathematician, the most
interesting question regarding the sweep maps is whether they are
bijective (as conjectured in Conjecture~\ref{conj:gen-sweep}).  For a
researcher interested in the $q,t$-Catalan numbers, however, of
comparable interest is the connection between the sweep maps and
statistics on lattice paths such as $\area$, $\bounce$ and $\dinv$.
Since shortly after Haiman's introduction of $\dinv$, it has been known
that a ``zeta map'' takes $\dinv$ to $\area$ to $\bounce$.  One point
of view, then, is that rather than having three statistics on Dyck
paths, we have one statistic --- $\area$ --- and a sweep map.  Many
polynomials related to the $q,t$-Catalan numbers can be defined using only an
``area'' and an appropriate sweep map.  That these polynomials are
jointly symmetric (conjecturally) supports the utility of this view (see
Section~\ref{sec:area-qtcat}).

The structure of this paper is as follows.  Section~\ref{sec:basic}
introduces the necessary background on lattice paths.  We then define
sweep maps and present Conjecture~\ref{conj:gen-sweep} on their
bijectivity in Section~\ref{sec:intro-sweep}.
Section~\ref{sec:alg-sweep} reviews various algorithms that have
appeared in the literature that are equivalent to special cases of 
the sweep map, while
Section~\ref{sec:invert-sweep} describes how to invert these maps
(when known).  Finally, Section~\ref{sec:area-qtcat} shows how the
sweep maps may be used to give concise combinatorial formulas for the
higher $q,t$-Catalan numbers and related polynomials formed
by applying the nabla operator to appropriate symmetric functions and
then extracting the coefficient of $s_{(1^n)}$. 

An extended abstract of this paper appears as~\cite{ALW-sweep-fpsac}.


\section{Partitions, Words, and Lattice Paths}
\label{sec:basic}

This section introduces our basic conventions regarding 
partitions, words, and lattice paths.
Integer parameters $a$ and $b$ will serve to restrict our attention to such
objects fitting within a rectangle of height $a$ and width $b$.
Integer parameters $r$ and $s$ will be used to assign a ``level'' to the 
components of the various objects considered.  Of particular interest is the 
case of $r=b$ and $s=-a$.  The constraint of $\gcd(a,b)=1$ arises naturally 
in some particular sweep maps such as the map due to Armstrong, Hanusa
and Jones~\cite{AHJ} and the map due to Gorksy and
Mazin~\cite{GM-jacI,GM-jacII} (see Sections~\ref{subsec:zetamap}
and~\ref{subsec:gorsky-mazin}, respectively).

Let $a,b \in \mathbb{Z}_{\geq 0}$.  Integer partition diagrams with at most
$a$ parts and largest part at most $b$ (drawn according to the English 
convention) fit in the rectangle with vertices $(0,0)$, $(b,0)$,
$(0,a)$ and $(b,a)$.  We denote the set of such partitions 
(identified with their diagrams, which are collections
of unit squares in the first quadrant) by $\ptnR = \ptnR(a,b)$.  Let
$\{\N,\E\}^*$ denote the set of all words $w = w_1w_2\cdots w_n$, $n\geq
0$, for which each $w_j \in \{\N,\E\}$, and let $\wdR = \wdR(\N^a\E^b)$
denote the subset of words consisting of $a$ copies of $N$ and $b$
copies of $E$.  Finally, let $\patR = \patR(\N^a\E^b)$ denote the set of
lattice paths from $(0,0)$ to $(b,a)$ consisting of $a$ unit-length
north steps and $b$ unit-length east steps.

There are natural bijections among the three sets $\ptnR$, $\wdR$ and
$\patR$.  Each word $w\in \wdR$ encodes a lattice path in $\patR$ by
letting each $\E$ correspond to an east step and each $\N$ correspond to
a north step.  The frontier of a partition $\pi\in \ptnR$ also
naturally encodes a path in $\patR$.  We write $\mkwd(P)$ or
$\mkwd(\pi)$ for the word associated with a path $P$ or a partition $\pi$,
respectively.  Operators $\mkpath$ and $\mkptn$ are defined analogously.
For example, taking $a=3$ and $b=5$, the path $P$ shown on the left
in Figure~\ref{fig:sweepmap1} has $\mkwd(P)=\NE{ENEENNEE}$ and
$\mkptn(P)=(3,3,1)$. For the word $w=\NE{EEENENNE}$, $\mkpath(w)$
is the path shown on the right in Figure~\ref{fig:sweepmap1}, whereas
$\mkptn(w)=(4,4,3)$.

Let $r,s \in \Z$.  We assign a \textbf{level} to each square of a
partition and each step of a path in the following manner.  First,
assign to each lattice point $(x,y)$ the
\textbf{$\boldsymbol{(r,s)}$-level} $ry+sx$.  Assign to each lattice
square the level of its southeast corner.  We will have occasion to
consider two different conventions for associating levels to north and
east steps of paths.  For the \textbf{east-north (E-N)} convention, each
east (resp. north) step inherits the level of its eastern (resp. northern)
endpoint.  For the \textbf{west-south (W-S)} convention, each east (resp. north)
step inherits the level of its western (resp. southern) endpoint.
Figure~\ref{fig:labeling} illustrates the various levels relevant to
the word $\NE{NNENE}$ for $r=8$ and $s=-5$.

\begin{figure}[htbp]
  \centering 
      {\scalebox{.6}{\includegraphics{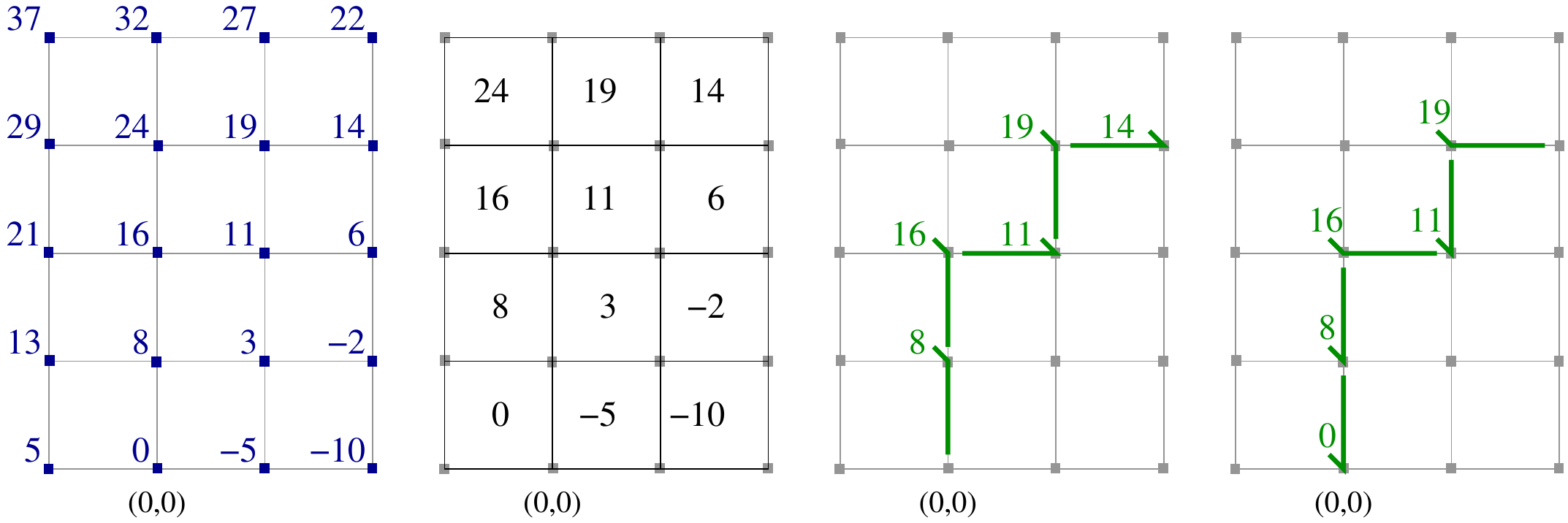}}}
      \caption{Illustration of level-assignment conventions for lattice
        points, squares, path steps with the east-north convention, and
        path steps with the west-south convention for the case of $r=8$
        and $s=-5$.}
  \label{fig:labeling}
\end{figure}

Let $\patD_{r,s}(\N^a\E^b)$ denote the set of lattice paths 
in $\patR(\N^a\E^b)$ whose 
steps all have nonnegative $(r,s)$-levels under the E-N convention.
We call these paths \textbf{$\boldsymbol{(r,s)}$-Dyck paths}.
A word $w\in\wdR(\N^a\E^b)$ is an \textbf{$\boldsymbol{(r,s)}$-Dyck word}
iff $\mkpath(w)$ is an $(r,s)$-Dyck path;
 let $\wdD_{r,s}(\N^a\E^b)$ denote the set of such words.
A partition $\pi\in\ptnR(a,b)$ is an \textbf{$\boldsymbol{(r,s)}$-Dyck 
partition} iff $\mkpath(\pi)$ is an $(r,s)$-Dyck path;
 let $\ptnD_{r,s}(a,b)$ denote the set of such partitions.

\section{The Sweep Map}
\label{sec:intro-sweep}

We begin in Section~\ref{subsec:def-sweep} by giving an algorithmic
description of the basic sweep maps for words over the alphabet $\{N,E\}$.
Some minor variations are presented in Sections~\ref{subsec:irrat-sweep}
and~\ref{subsec:minor-var}.
We then present a general sweep map in Section~\ref{subsec:gen} 
that acts on words over any alphabet with arbitrary weights
assigned to each letter.

\subsection{The Basic Sweep Map}
\label{subsec:def-sweep}

Let $r,s\in \Z$.  We first describe the
\textbf{$\boldsymbol{(r,s)^{-}}$-sweep map},
$\sw^{-}_{r,s}:\{\N,\E\}^*\rightarrow\{\N,\E\}^*$.  Given $w \in
\{\N,\E\}^*$, assign levels using the east-north convention applied to
$\mkpath(w)$.  Define a word $y=\sw^{-}_{r,s}(w)$ by the following
algorithm.  Initially, $y$ is the empty word. For $k=-1,-2,-3,\ldots$
and then for $k=\ldots,3,2,1,0$, scan $w$ from right to left.
Whenever a letter $w_i$ is encountered with level equal to $k$, append
$w_i$ to $y$.  The \textbf{$\boldsymbol{(r,s)^{+}}$-sweep map}
$\sw^{+}_{r,s}$ is defined the same way as $\sw^{-}_{r,s}$, except
that: the value $0$ is the first value of $k$ used rather than the
last; and for each value of $k$, $w$ is scanned from left to right.
Figure~\ref{fig:negsweep-ex} illustrates the action of both
$\sw^{+}_{3,-2}$ and $\sw^{-}_{3,-2}$ on a path in
$\patR(\N^8\E^{10})$.  
We define the action of $\sw^{-}_{r,s}$ on a partition $\pi$ as
$\mkptn(\sw_{r,s}^{-}(\mkwd(\pi)))$ and the action of $\sw_{r,s}^{-}$ on a
path as $\mkpath(\sw_{r,s}^{-}(\mkwd(P)))$; similarly for $\sw^{+}_{r,s}$.

\begin{figure}[htbp]
  \centering 
      {\scalebox{.3}{\includegraphics{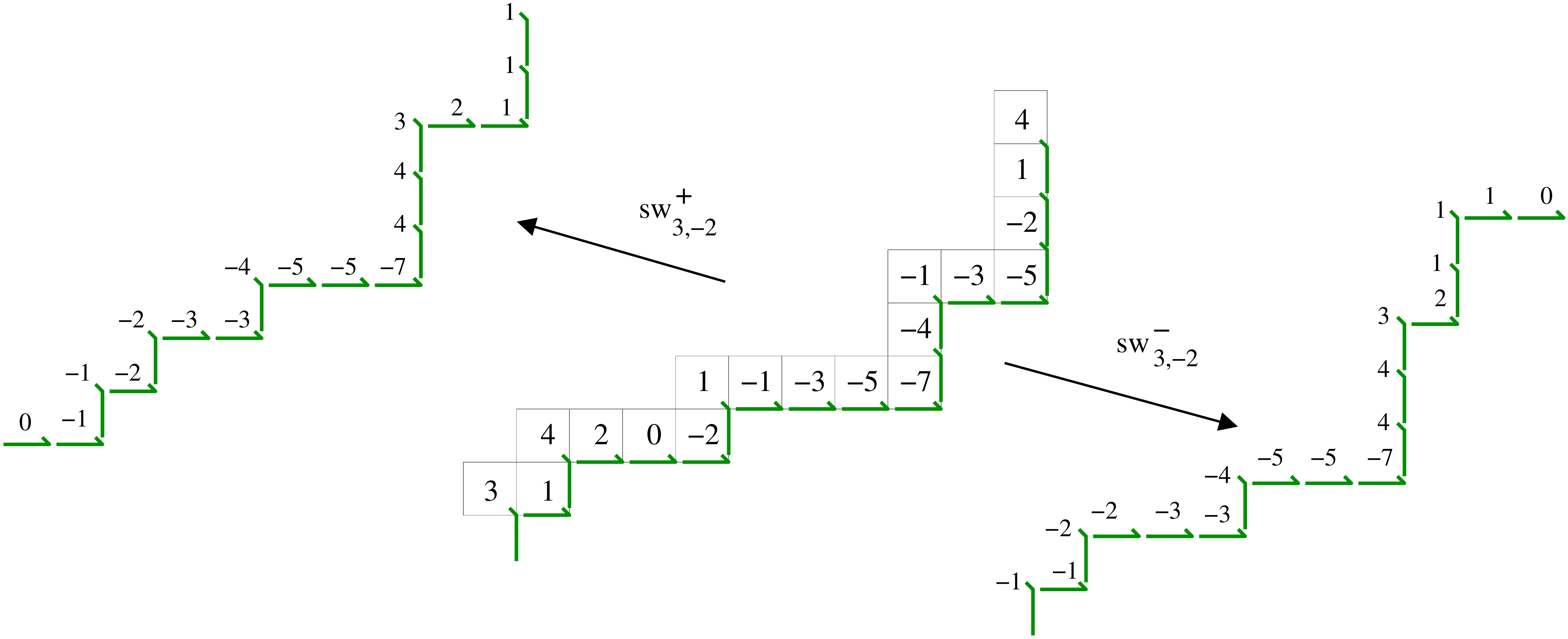}}}
      \caption{Images of $\sw^{-}_{3,-2}$ and $\sw^{+}_{3,-2}$ acting on
        a path in $\patR(\N^8\E^{10})$.}
  \label{fig:negsweep-ex}
\end{figure}

Geometrically, we think of each step in $\mkpath(w)$ as a \emph{wand} whose
\emph{tip} is located at the lattice point at the end of each step.  
This visual description reminds us that the maps $\sw^{\pm}_{r,s}$
are assigning levels to steps in the path using the east-north convention.
For $r>0$ and $s<0$, $\sw^-_{r,s}$ acts 
by scanning southwest along each diagonal line
$ry+sx=k$ (taking $k$'s in the appropriate order) 
and ``sweeping up'' the wands whose
tips lie on each of these diagonals. The wands are laid out in the
order in which they were swept up to produce the output lattice path.
The labels on the wand tips are \emph{not} part of the final output.  
It is clear from this description, or from the original definition, that the
sweep map depends only on the ``slope'' $-s/r$; i.e., 
$\sw_{r,s}^{\pm}=\sw_{rm,sm}^{\pm}$ for every positive integer $m$.

\begin{remark}
 The $\area^*_{r,s}$ statistic introduced in
 Section~\ref{subsec:area-stat} is computed using the $(r,s)$-levels
 of the steps in $w$, which depend not just on the ratio $-s/r$ but 
 on the specific values of $r$ and $s$.
 So it is not always safe to assume that $r$ and $s$ are relatively prime.
\end{remark}

Since the sweep maps rearrange the letters in the input word 
$w$, it is immediate that both $\sw^{-}_{r,s}$ and $\sw^{+}_{r,s}$ map
each set $\wdR(\N^a\E^b)$ into itself.  We will see later
that $\sw^{-}_{r,s}$ maps each set $\wdD_{r,s}(\N^a\E^b)$ into itself.

\subsection{The Irrational-Slope Sweep Map}
\label{subsec:irrat-sweep}

So far, for each rational $-s/r$, we have defined the basic sweep map
$\sw^-_{r,s}$ (which we also call the \textbf{negative-type} sweep
map) and the \textbf{positive-type} sweep map $\sw^+_{r,s}$.  We can
extend this setup to define sweep maps $\sw_{\beta}$ indexed by
\emph{irrational} numbers $\beta$. We regard inputs to $\sw_{\beta}$
as lattice paths $\mkpath(w)\in\patR(\N^a\E^b)$ consisting of
``wands'' with tips at their north ends and east ends.  There is a
``sweep line'' $y-\beta x=k$ that sweeps through the plane as $k$
decreases from just below zero to negative infinity, then from
positive infinity to zero. Because $\beta$ is irrational, every sweep
line intersects at most one wand tip in $\mkpath(w)$.  We obtain
$\sw_{\beta}(w)$ by writing the wands in the order in which the sweep
line hits the wand tips.

For fixed $a,b\geq 0$ and fixed $r>0,s<0$, one readily checks that
$\sw^{-}_{r,s}(w)=\sw_{\beta}(w)$ for all irrationals $\beta$ with
$\beta< -s/r$ and $\beta$ sufficiently close to $-s/r$.  On the other
hand, $\sw_{r,s}^+(w)=\sw_{\beta}(w)$ for all irrationals $\beta$ with
$\beta> -s/r$ and $\beta$ sufficiently close to $-s/r$.  This explains
the terminology ``positive-type'' and ``negative-type'' sweep map.
One approach to studying the sweep map is to understand the
``jump discontinuities'' between $\sw^{+}_{r,s}$ and $\sw^{-}_{r,s}$
that occur at certain critical rationals $-s/r$. 

For irrational $\beta$, we say $w$ is a \textbf{$\beta$-Dyck word}
iff for all lattice points $(x,y)$ visited by $\mkpath(w)$,
$y-\beta x\geq 0$. 

\begin{prop}
(a) For all irrational $\beta$, if $w\in\{\N,\E\}^*$ is a $\beta$-Dyck word
then $v=\sw_{\beta}(w)$ is a $\beta$-Dyck word.
(b) For all $r,s\in\Z$, if $w\in\{\N,\E\}^*$ is an
$(r,s)$-Dyck word, then $v=\sw_{r,s}^-(w)$ is an $(r,s)$-Dyck word.  
\end{prop}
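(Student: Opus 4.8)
The plan is to prove both parts through a single telescoping estimate: I handle the irrational case (a) directly, and deduce the integer case (b) by an infinitesimal perturbation of the parameters.

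First I fix notation common to both parts. Write $w=w_1\cdots w_n$ and let $P_0=(0,0),P_1,\dots,P_n$ be the lattice points visited by $\mkpath(w)$, so $P_i$ is the tip of the $i$-th wand. Under the E-N convention the level of the $i$-th step is the level $\ell_i$ of $P_i$, with $\ell_0=0$, and $\ell_i=\ell_{i-1}+\delta_i$ where the increment $\delta_i$ depends only on whether $w_i=\N$ or $\E$ (namely $\delta_i=+1$ or $-\beta$ in part (a), and $\delta_i=r$ or $s$ in part (b)). The Dyck hypothesis says exactly that $\ell_i\ge 0$ for all $i$ (the origin gives $\ell_0=0$ for free). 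Let $v=v_1\cdots v_n$ be the output, $Q_0,\dots,Q_n$ its visited points, and $B_j$ the set of original indices of the first $j$ letters of $v$. Because the level increment depends only on the letter, the level $L_j$ of $Q_j$ equals $\sum_{i\in B_j}\delta_i=\sum_{i\in B_j}(\ell_i-\ell_{i-1})$. In each part the goal is to show $L_j\ge 0$ for all $j$, since $Q_0,\dots,Q_n$ are precisely the points visited by $\mkpath(v)$.

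The core estimate is this: suppose the tip-levels $\ell_1,\dots,\ell_n$ are pairwise distinct and strictly positive (so $P_0$ is the unique point of level $0$), and $v$ is obtained by listing the steps in order of decreasing $\ell_i$. Then for any generic $t$ strictly between the $j$-th and $(j+1)$-th largest of the $\ell_i$, the prefix $B_j$ equals the threshold set $A(t)=\{\,i:\ell_i>t\,\}$. Now $A(t)$ is a disjoint union of maximal intervals $[i_0,i_1]$ of consecutive indices, and $\sum_{i\in A(t)}(\ell_i-\ell_{i-1})$ telescopes over each interval to $\ell_{i_1}-\ell_{i_0-1}$; here $\ell_{i_1}>t$ while $\ell_{i_0-1}<t$ (the latter because $i_0-1\notin A(t)$, or else $i_0-1=0$ and $\ell_0=0<t$), so each interval contributes a positive amount and $L_j\ge 0$. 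This already gives part (a): for irrational $\beta$ the levels $\ell_i=y_i-\beta x_i$ are pairwise distinct (an equality would force $\beta\in\QQ$) and, for a $\beta$-Dyck word, strictly positive for $i\ge 1$; since $\sw_{\beta}$ processes the nonnegative levels in decreasing order, the estimate gives $L_j=y(Q_j)-\beta x(Q_j)\ge 0$, i.e.\ $v$ is a $\beta$-Dyck word.

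For part (b) the obstacle is that the integer tip-levels $\ell_i=ry_i+sx_i$ may coincide, and $\sw^{-}_{r,s}$ breaks such ties by scanning right to left. I would sidestep this by running the core estimate on the perturbed level function $\tilde\ell_i=\ell_i+\epsilon i$ with $\epsilon>0$ small, which is the tip-level for the parameters $(r+\epsilon,s+\epsilon)$ because $x_i+y_i=i$. For all but finitely many $\epsilon$ the $\tilde\ell_i$ are pairwise distinct, and for $\epsilon$ small their decreasing order is exactly ``$\ell_i$ decreasing, ties broken by larger index first,'' i.e.\ the right-to-left rule; hence ordering $w$ by decreasing $\tilde\ell_i$ reproduces $v=\sw^{-}_{r,s}(w)$ (all $\ell_i\ge 0$, so only the nonnegative phase of the sweep is active). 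Since $w$ is $(r,s)$-Dyck we have $\tilde\ell_i=\ell_i+\epsilon i\ge 0$ with $P_0$ the unique zero, so the core estimate applies to $\tilde\ell$ and gives $\tilde L_j=L_j+\epsilon j\ge 0$ for every $j$. Letting $\epsilon\to 0^{+}$ yields $L_j\ge 0$, so $v$ is an $(r,s)$-Dyck word; note this handles all $(r,s)\in\ZZ^2$ at once, with no hypothesis on the signs of $r,s$.

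The main obstacle, as indicated, is the tie-breaking in (b): the perturbation must be chosen so that the order it induces reproduces the right-to-left scan exactly, and checking this correspondence is the delicate point. I would also verify the two boundary contributions in the telescoping (the interval with $i_0-1=0$, and an interval reaching $i_1=n$) and confirm the identity $L_j=\sum_{i\in B_j}\delta_i$ together with the fact that the points $Q_0,\dots,Q_n$ carry exactly the levels $L_j$; these are routine.
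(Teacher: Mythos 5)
Your proposal is correct. For part (a) it is essentially the paper's argument in algebraic dress: your ``core estimate'' decomposes the threshold set $A(t)$ into maximal intervals of consecutive indices and telescopes $\sum(\ell_i-\ell_{i-1})$ over each one, which is exactly the paper's decomposition of $w$ into maximal substrings $w^{(1)},\ldots,w^{(t)}$ whose steps end in the region $R_k=\{y>\beta x+k\}$, each contributing $a_i-\beta b_i=\ell_{i_1}-\ell_{i_0-1}>0$; your version is the cleaner and more precise rendering of the paper's geometric phrasing. For part (b) the two proofs diverge: the paper reduces to $r>0$, $s<0$, replaces the slope $-s/r$ by a nearby irrational $\beta$ chosen so that the strip between the two boundary lines contains no lattice points, and then quotes part (a) together with the (only ``readily checked'') identity $\sw^-_{r,s}=\sw_{\beta}$ for $\beta$ just below $-s/r$; you instead perturb the weights to $(r+\epsilon,s+\epsilon)$, i.e.\ $\tilde\ell_i=\ell_i+\epsilon i$, verify explicitly that for small generic $\epsilon>0$ the decreasing $\tilde\ell$-order reproduces the right-to-left tie-breaking of $\sw^-_{r,s}$, re-run the telescoping estimate to get $L_j+\epsilon j>0$, and let $\epsilon\to 0^+$ (or use integrality of $L_j$). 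Your route buys uniformity -- it treats all sign patterns of $(r,s)$ at once, whereas the paper's restriction to $r>0$, $s<0$ tacitly defers the other cases to the degenerate-case discussion -- and it replaces the lattice-point-free-strip argument by an explicit limit; the paper's route buys economy, since (b) becomes a two-line corollary of (a) once the irrational-approximation facts from Section 3.2 are granted. Both are sound.
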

\begin{proof} 
(a) If $\beta<0$ then all words are $\beta$-Dyck words, 
so the proposition certainly holds.  
Therefore, in the following we assume $\beta > 0$.  
Fix $j > 0$; it suffices to show that the point $(x,y)$ reached by
the first $j$ steps of $\mkpath(v)$ satisfies $y>\beta x$. Since
$\beta$ is irrational and $w$ is a $\beta$-Dyck word, there exists a
real $k>0$ such that $v_1\cdots v_j$ consists of all the symbols in
$w$ with wand tips at levels higher than $k$.  In other words,
$v_1\cdots v_j$ is a rearrangement of all the steps of $\mkpath(w)$
that end at points $(x,y)$ with $y>\beta x+k$.  Now, $\mkpath(w)$
begins at the origin, which has level zero.  In general, this path
enters and leaves the region $R_k=\{(x,y): y>\beta x+k\}$ several
times.  Let $w^{(1)},\ldots,w^{(t)}$ be the maximal substrings of
consecutive letters in $w$ such that every step of $w^{(i)}$ ends in
$R_k$.  Every $w^{(i)}$ begins with a north step that enters $R_k$
from below, and every $w^{(i)}$ except possibly $w^{(t)}$ is followed
by an east step that exits $R_k$ to the right. Suppose $w^{(i)}$
consists of $a_i$ north steps and $b_i$ east steps; since the boundary
of $R_k$ is a line of slope $\beta$, the geometric fact in the
previous sentence implies that $a_i/b_i>\beta$ for $1\leq i\leq t$.  By
definition, $v_1\cdots v_j$ is some rearrangement of
$a=a_1+\cdots+a_t$ north steps and $b=b_1+\cdots+b_t$ east steps.  Now
$a_i>\beta b_i$ for all $i$ implies $a>\beta b$. Thus, $v_1\cdots v_j$
is a path from $(0,0)$ to $(b,a)$, where $a>\beta b$.  Since $j$ was
arbitrary, $\mkpath(v)$ is a $\beta$-Dyck path.

(b) It suffices to treat the case $r>0$, $s<0$.  
Fix $a,b\geq 0$ and $w\in\wdD_{r,s}(\N^a\E^b)$.
We can choose an irrational $\beta<-s/r$ so close to $-s/r$
that the region $S=\{(x,y)\in\mathbb{R}^2: \beta x\leq y<(-s/r)x,y\leq a\}$
contains no lattice points, and $v=\sw_{\beta}(w)=\sw_{r,s}^{-}(w)$.
Since $w$ is an $(r,s)$-Dyck word and $\beta<-s/r$, $w$ is also
a $\beta$-Dyck word. By part (a), $v=\sw_{\beta}(w)$ is a
$\beta$-Dyck word. Since $S$ contains no lattice points, $v$
is an $(r,s)$-Dyck word, as needed.

\end{proof}

\subsection{Reversed and Transposed Sweeps}
\label{subsec:minor-var}

For a fixed choice of $r$ and $s$, there are four 
parameters that can be used to define a potential sweep map:
\begin{itemize}
\item the level to start sweeping at,
\item the direction of sweep for a given level 
(i.e., right-to-left or left-to-right),
\item the relative order in which to visit levels (i.e., $k+1$ after level
  $k$ versus $k-1$ after level $k$),
\item the convention for levels assigned to steps (i.e., using the
  west-south convention or the east-north convention).
\end{itemize}
Empirical evidence suggests that for each of the $8=2^3$ possible
choices for the second through fourth parameters, there is a unique
choice of starting level that will lead to a bijective sweep map for
general $\patR(\N^a\E^b)$.  In fact, each of these maps is closely
related to the others through the following two natural involutions on words.
Let $\rev:\{\N,\E\}^*\rightarrow\{\N,\E\}^*$ be the \textbf{reversal map} given
by $\rev(w_1w_2\cdots w_n)=w_n\cdots w_2 w_1$.  Let
$\flip:\{\N,\E\}^*\rightarrow\{\N,\E\}^*$ be the \textbf{transposition map}
that acts by interchanging $\N$'s and $\E$'s.  Evidently, both
$\rev$ and $\flip$ are involutions; $\rev$ maps $\patR(\N^a\E^b)$
bijectively onto itself, whereas $\flip$ maps $\patR(\N^a\E^b)$
bijectively onto $\patR(\N^b\E^a)$.  We can modify the sweep maps by
composing on the left or right with $\rev$ and/or $\flip$. The new
maps are bijections (between appropriate domains and codomains)
iff the original sweep maps are bijections.
Table~\ref{tab:sweeps} displays the eight maps along with their 
relationships to $\sw^{-}_{r,s}$ and $\sw^{+}_{r,s}$.
One can also check that
$$\rev\circ\sw^+_{-r,-s}=\sw^-_{r,s}\mbox{ and }
 \flip\circ\sw^{-}_{s,r}\circ\> \flip=\sw^-_{r,s}.$$

The following degenerate cases of the sweep map are readily verified.  
\begin{itemize}
\item If $r<0$ and $s<0$, then $\sw_{r,s}^{\pm}=\id$, the
 identity map on $\{\N,\E\}^*$.
\item If $r>0$ and $s>0$, then $\sw_{r,s}^{\pm}=\rev$, the reversal map.
\item For $r=s=0$, $\sw_{0,0}^-=\rev$ and $\sw_{0,0}^+=\id$.
\item If $r=0$ and $s<0$, then $\sw_{r,s}^+=\id$,
 whereas $\sw_{r,s}^-$ maps $\N^{a_0}\E\N^{a_1}\E\N^{a_2}\cdots\E\N^{a_k}$
(where $a_j\geq 0$) to $\N^{a_1}\E\N^{a_2}\E\cdots\N^{a_k}\E\N^{a_0}$.
Similar statements hold in the cases: $r=0$ and $s>0$; 
 $s=0$ and $r<0$; $s=0$ and $r>0$.  
\end{itemize}
\begin{table}[ht]
  \centering
  \caption{Symmetries of sweep maps.}
  \begin{tabular}{@{}cccccc@{}} \toprule
    Map & Step-labeling &  Order to & Sweep direction & Start level\\
        & convention  &  scan levels& on each level & \\\midrule
    $\sw^{-}_{r,s}$ & E-N & decreasing & $\leftarrow$ & $-1$\\
    $\sw^{+}_{r,s}$ & E-N & decreasing & $\rightarrow$  & $0$ \\
    $\rev \circ \sw^{-}_{r,s}$ & E-N & increasing & $\rightarrow$  & $0$ \\
    $\rev \circ \sw^{+}_{r,s}$ & E-N & increasing & $\leftarrow$  & $1$ \\
  $\sw^{-}_{r,s}\circ\> \rev$ & W-S & increasing & $\rightarrow$  & $ra+sb+1$ \\
    $\sw^{+}_{r,s}\circ\> \rev$ & W-S & increasing & $\leftarrow$  & $ra+sb$ \\
    $\rev\circ \sw^{-}_{r,s}\circ\> \rev$ & W-S & decreasing & $\leftarrow$ 
    & $ra+sb$ \\
    $\rev\circ \sw^{+}_{r,s}\circ\> \rev$ & W-S & decreasing & $\rightarrow$ 
    & $ra+sb-1$
\\\bottomrule
  \end{tabular}
  \label{tab:sweeps}
\end{table}


\subsection{The General Sweep Map}
\label{subsec:gen}

Suppose $A=\{x_1,\ldots,x_k\}$ is a given alphabet and
$\wt:A\rightarrow\Z$ is a function assigning an integer
\textbf{weight} to each letter in $A$.  Given a word $w=w_1w_2\cdots
w_n\in A^*$, define the \emph{levels $l_0,\ldots,l_n$ relative to
  the weight function $\wt$} by setting $l_0=0$ and, for $1\leq i\leq
n$, letting $l_{i}=l_{i-1}+\wt(w_i)$.  (These levels are
essentially computed according to the east-north convention, though
the west-south convention works equally well.)  Define
$\sw_{\wt}:A^*\rightarrow A^*$ as follows: For each $k$ from $-1$ down
to $-\infty$ and then from $\infty$ down to $0$, scan $w$ from right
to left, writing down each $w_i$ with $l_i=k$ and $i>0$.  Let
$\wdR(x_1^{n_1}\cdots x_k^{n_k})$ be the set of words $w\in A^*$
consisting of $n_j$ copies of $j$ for $1\leq j\leq k$.  Let
$\wdD_{\wt}(x_1^{n_1}\cdots x_k^{n_k})$ be the set of such words for
which all levels $l_i$ are nonnegative.
\begin{conj}\label{conj:gen-sweep}
  Let $A=\{x_1,\ldots,x_k\}$ be an alphabet and $\wt:A\rightarrow\Z$
  a weight function.  For any nonnegative integers $n_1,n_2,\ldots,
  n_k$,
\begin{itemize}
\item[(a)] 
 $\sw_{\wt}$ maps $\wdR(x_1^{n_1}\cdots x_k^{n_k})$ bijectively to itself.
\item[(b)]
 $\sw_{\wt}$ maps $\wdD_{\wt}(x_1^{n_1}\cdots x_k^{n_k})$ bijectively to itself.
\end{itemize}
\end{conj}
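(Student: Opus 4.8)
The plan is to reduce both parts to a single injectivity statement and then to attack injectivity by reconstructing the input word from its image.

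First I would record that $\sw_{\wt}$ only rearranges the letters of its argument, so it maps each \emph{finite} set $\wdR(x_1^{n_1}\cdots x_k^{n_k})$ into itself; hence for part (a) it is enough to prove that $\sw_{\wt}$ is injective on this set, after which bijectivity is automatic. For part (b) I would first prove the analogue of the Proposition in Section~\ref{subsec:irrat-sweep} for general weights, namely that $\sw_{\wt}$ carries $\wdD_{\wt}(x_1^{n_1}\cdots x_k^{n_k})$ into itself. Since the Dyck condition here is the one-dimensional requirement that every level $l_i$ be nonnegative, this should follow by an argument analogous to part (a) of that Proposition: after a generic perturbation of the weights that makes all levels distinct, one bounds the partial sums of the output by tracking how the level walk enters and leaves each threshold $l\ge c$. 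Granting this, part (b) is a consequence of part (a), because the restriction of an injection to the finite invariant subset $\wdD_{\wt}$ is an injection of a finite set to itself, hence a bijection. Everything therefore comes down to injectivity of $\sw_{\wt}$ on $\wdR(x_1^{n_1}\cdots x_k^{n_k})$.

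To prove injectivity I would build an explicit inverse. The key structural observation is that $\sw_{\wt}$ emits its output in blocks indexed by the input levels, the blocks occurring in the fixed order in which the sweep visits levels (all negative levels $-1,-2,\dots$ first, then the nonnegative levels from largest down to $0$); thus, writing $\mu_j$ for the level in $w$ of the letter at position $j$ of $v=\sw_{\wt}(w)$, the sequence $(\mu_1,\dots,\mu_n)$ is automatically sorted into this order. Consequently the only numerical data about $w$ that is not already displayed by $v$ is the multiset of input levels $\{l_1,\dots,l_n\}$, equivalently the input level walk $0=l_0,l_1,\dots,l_n$. Recovering this walk is an Eulerian-path problem: each occurrence of a letter $x$ ending at level $\lambda$ is a directed edge from $\lambda-\wt(x)$ to $\lambda$, and the input walk is an Eulerian path from $0$ to $l_n$ in the resulting directed multigraph on $\ZZ$, with the right-to-left scan within each level meant to single out which path occurs. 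Concretely this reconstruction takes the form of a \emph{bounce path}: one reads $v$ while maintaining a current level and uses the letters to bounce between levels, peeling off the blocks in the correct order. I would aim to show that this procedure is always well defined and returns the unique preimage.

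The hard part --- and the reason the conjecture is open --- is precisely the step of pinning down the level walk. The obstacle is that the multiset of input levels is \emph{not} determined by the letter content alone: for example $\sw^-_{3,-2}(\NE{NENEE})=\NE{NNEEE}$, where the input levels are $\{0,1,2,3,4\}$ but the levels of the output word are $\{0,2,3,4,6\}$. Hence the block decomposition of $v$ cannot simply be read off but must be found self-consistently, and a priori two different level walks --- and so two different inputs --- might produce the same sequence of blocks; ruling this out is exactly the content of injectivity. Equivalently, one must show that the relevant Eulerian path, together with the scanning convention, is unique and realizable for \emph{every} weight function, a uniform assertion whose only known proofs proceed case by case, by exhibiting an explicit bounce path tailored to a particular slope or alphabet (as in the special cases treated in Section~\ref{sec:invert-sweep}). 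I expect that a general argument will require either a monovariant controlling the bounce reconstruction simultaneously across all weights, or an inductive ``peeling'' of a distinguished letter whose deletion commutes with the sweep; producing such a uniform device is, in my view, the crux of the problem.
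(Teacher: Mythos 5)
This statement is a \emph{conjecture}: the paper offers no proof of it, and indeed states (Sections~\ref{sec:intro-sweep} and~\ref{subsec:invert-intro}) that bijectivity of the general sweep map is the main open problem. Your proposal does not claim otherwise, and what it does assert is sound and matches the paper's own framework: the reduction of bijectivity to injectivity via finiteness, the reduction of (b) to (a) once one knows $\sw_{\wt}$ preserves $\wdD_{\wt}(x_1^{n_1}\cdots x_k^{n_k})$ (a genuine generalization of the Proposition in Section~\ref{subsec:irrat-sweep}, whose perturbation argument does extend to arbitrary weights, though you only sketch this), and the identification of the crux --- recreating the input levels of the output letters, i.e.\ constructing a ``bounce path'' --- is exactly the strategy the paper describes in Section~\ref{subsec:strategy-invert} and carries out only in special cases. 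Your example $\sw^-_{3,-2}(\NE{NENEE})=\NE{NNEEE}$ correctly illustrates why the level multiset cannot be read off from the output. In short: the proposal is an accurate account of the state of the problem and of the known approach, not a proof, and no proof exists in the paper to compare it against.
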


\section{Algorithms Equivalent to a Sweep Map}
\label{sec:alg-sweep}

This section reviews some algorithms that have appeared in the literature 
that are equivalent to special cases of the sweep map and its variations.
We describe each algorithm and indicate its exact relation to the general
sweep map. The algorithms reviewed here fall into three main classes:
algorithms that operate on area vectors, algorithms that operate on 
hook-lengths of cells in a partition, and algorithms involving generators
of certain semi-modules. The algorithms based on area vectors arose
in the study of the $q,t$-Catalan polynomials and their generalizations;
these polynomials will be discussed at greater length later in this paper.
The algorithms involving hook-lengths and semi-modules were introduced
to study the special case of Dyck objects where the dimensions $a$ and $b$ are 
coprime. The sweep map provides a single unifying framework that simultaneously
generalizes all these previously studied algorithms. We find it remarkable
that this map, which has such a simple definition, encodes such a rich
array of mathematical structures.

\subsection{Algorithms Based on Area Vectors}
\label{subsec:alg-area-vector}

\subsubsection{Introduction}

This subsection studies several algorithms that operate on lattice
paths by manipulating an \emph{area vector} that records how many area
cells in each row lie between the path and a diagonal boundary. The
simplest version of these algorithms is a bijection on Dyck paths
described in a paper by Haglund and Loehr~\cite[\S3,
Bijections]{HL-park}.
In~\cite{loehr-mcat,loehr-thesis,loehr-trapz}, Loehr generalized
this bijection to define a family of maps $\phi$
acting on $m$-Dyck paths and on lattice paths contained in certain trapezoids.
We begin our discussion with the maps for trapezoidal lattice paths,
which contain the earlier maps as special cases. 
We then look at a generalization of $\phi$ acting on
lattice paths inside squares, followed by a different variation
that acts on Schr\"oder paths containing diagonal steps.

\subsubsection{Trapezoidal Lattice Paths}
\label{subsubsec:trapz}

Fix integers $k\geq 0$ and $n,m>0$.  Let $T_{n,k,m}$ denote
the set of \emph{trapezoidal lattice paths} from $(0,0)$ to $(k+mn,n)$ that
never go strictly to the right of the line $x=k+my$.  
The paper~\cite{loehr-trapz} introduces a bijection
$\phi=\phi_{n,k,m}:T_{n,k,m}\rightarrow T_{n,k,m}$ 
and its inverse. 
That paper (last paragraph of Section 3.1)
accidentally switches the roles of $\phi$ and $\phi^{-1}$
compared to~\cite{loehr-mcat} and other literature. The map $\phi_{n,k,m}$
discussed below is the composite $\alpha^{-1}\circ\beta\circ\gamma$
from~\cite{loehr-trapz} (which is erroneously 
denoted $\phi^{-1}$ in that paper). After recalling the
definition of this map, we show that 
a variant of $\phi_{n,k,m}$ is a sweep map.

Given a path $P\in T_{n,k,m}$, we first construct the
\textbf{
area vector} $g(P)=(g_0,g_1,\ldots,g_{n-1})$,
where $g_i$ is the number of complete lattice squares in the
horizontal strip $\{(x,y): x\geq 0, i\leq y\leq i+1\}$ that lie to the
right of $P$ and to the left of the line $x=k+my$.  The area vector
$g(P)$ has the following properties: $0\leq g_0\leq k$; $g_i$ is a
nonnegative integer for $0\leq i<n$; and $g_i\leq g_{i-1}+m$ for
$1\leq i<n$. One readily checks that $P\mapsto g(P)$ is a bijection
from $T_{n,k,m}$ to the set of vectors of length $n$ with the
properties just stated.

For $P\in T_{n,k,m}$, we compute $\phi(P)$ by concatenating
lattice paths (regarded as words in $\{\N,\E\}^*$) that are built up
from subwords of $g(P)$ as follows. For $i=0,1,2,\ldots$, let
$z^{(i)}$ be the subword of $g(P)$ consisting of symbols in the
set $\{i,i-1,i-2,\ldots,i-m\}$; let $M$ be the largest $i$ such that $z^{(i)}$
is nonempty.  Create a word $\sigma^{(i)}\in\{\N,\E\}^*$
from $z^{(i)}$ by replacing each symbol $i$ in $z^{(i)}$ by $\N$ and
replacing all other symbols in $z^{(i)}$ by $\E$.  
Let $\sigma$ be the concatenation of words
\[ \sigma=\sigma^{(0)}\,\,\E\sigma^{(1)}\,\,\E\sigma^{(2)}\cdots\,\,
\E\sigma^{(k)}\, \sigma^{(k+1)}\,\cdots\,\sigma^{(M)}, \]
in which an extra east step is added after the first $k$ words.
Define $\phi(P)=\mkpath(\sigma)$.
It is proved in~\cite[Sec. 3]{loehr-trapz} that $\phi(P)$
always lies in $T_{n,k,m}$, and that $\phi_{n,k,m}$ is a bijection.

To relate $\phi$ to the sweep map, we need to introduce
a modified map $\phi'$ that incorporates the
bijection described in~\cite[Sec. 4]{loehr-trapz}.
Keep the notation of the previous paragraph.  
For all $i$ with $k<i\leq M$, note that $\sigma^{(i)}$ must begin
with an $\E$, so we can write $\sigma^{(i)}=\E\tilde{\sigma}^{(i)}$. 
Let $\tau^{(i)}=\rev(\sigma^{(i)})$ for $0\leq i\leq k$,
and let $\tau^{(i)}=\E\,\rev(\tilde{\sigma}^{(i)})$ for $k<i\leq M$.
Define $\phi'(P)=\mkpath(\tau)$, where
\[ \tau=\tau^{(0)}\,\,\E\tau^{(1)}\,\,\E\tau^{(2)}\cdots\,\,\E\tau^{(k)}
\,\tau^{(k+1)}\,\cdots\,\tau^{(M)}. \]
\begin{example}\label{ex:phi}
Let $n=8$, $k=2$, $m=2$, and $\mkwd(P)=\NE{ENNEENEEEEENNEEENNEEENEEEE}$.
Then $g(P)=(1,3,3,0,2,1,3,2)$, so 
\[ \begin{array}{llllll}
z^{(0)}=0, & z^{(1)}=101, & z^{(2)}=10212, & z^{(3)}=1332132,
& z^{(4)}=33232,& z^{(5)}=333, \\
\sigma^{(0)}=\NE{N}, & \sigma^{(1)}=\NE{NEN}, & \sigma^{(2)}=\NE{EENEN}, &
\sigma^{(3)}=\NE{ENNEENE}, & \sigma^{(4)}=\NE{EEEEE}, & \sigma^{(5)}=\NE{EEE},\\
\tau^{(0)}=\NE{N}, & \tau^{(1)}=\NE{NEN}, & \tau^{(2)}=\NE{NENEE}, &
\tau^{(3)}=\NE{EENEENN}, & \tau^{(4)}=\NE{EEEEE}, & \tau^{(5)}=\NE{EEE},
\end{array}\] 
\begin{align*}
 \mkwd(\phi(P)) &= \sigma = 
   \NE{N\,\,ENEN\,\,EEENEN\,\,ENNEENE\,\,EEEEE\,\,EEE}, \\
 \mkwd(\phi'(P)) &= \tau = 
   \NE{N\,\,ENEN\,\,ENENEE\,\,EENEENN\,\,EEEEE\,\,EEE}.
\end{align*}
\end{example}

\begin{theorem}\label{thm:trapz-vs-sweep}
 For all $k\geq 0$, $n,m>0$, and $P\in T_{n,k,m}$, 
$$\mkwd(\phi'_{n,k,m}(P))=
\flip\circ\rev\circ\sw^{-}_{1,-m}\circ\>\rev\circ\flip(\mkwd(P)).$$
\end{theorem}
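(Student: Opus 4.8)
The plan is to peel off the outer and inner involutions, reduce the claim to a single application of $\sw^{-}_{1,-m}$, and then compare the level-classes of that sweep with the blocks $\tau^{(\ell)}$ one level at a time. Write $w=\mkwd(P)$ and $u=\rev(\flip(w))$, so that the right-hand side is $\flip\bigl(\rev(\sw^{-}_{1,-m}(u))\bigr)$. The sweep sorts the steps of $u$ into consecutive blocks, one for each level; the outer $\rev$ reverses the order of these blocks and reverses each block internally, while the outer $\flip$ restores the letters. So once I understand the level of every step of $u$ in terms of the area vector $g(P)=(g_0,\ldots,g_{n-1})$, the two involutions become pure bookkeeping.

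First I would compute levels. Under the east-north convention, the $(1,-m)$-level of the step of $u$ in position $i$ equals the number of $\E$'s minus $m$ times the number of $\N$'s among the last $i$ letters of $w$, since $\flip$ swaps $\N\leftrightarrow\E$ and $\rev$ converts a prefix of $u$ into a suffix of $w$. Writing $c_i:=k+mi-g_i$ for the $x$-coordinate of the north step in row $i$ (equivalently, $c_i$ is the number of east steps of $P$ preceding that north step), evaluating this suffix count at the two kinds of steps of $P$ should give two clean formulas: the step of $u$ arising from the row-$i$ north step of $P$ has level exactly $g_i$, and the step of $u$ arising from an east step $(x,j)\to(x+1,j)$ of $P$ has level $k+mj-x$. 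Because $P\in T_{n,k,m}$ stays weakly left of the line $x=k+my$, every level is nonnegative and every east step has level at least $1$; hence the sweep visits only the levels from the maximum down to $0$.

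Next I would translate the sorted output back onto $P$. Scanning $u$ from right to left is the same as traversing $P$ from beginning to end, and a step of $u$ is an $\N$ exactly when the corresponding step of $P$ is an east step. Applying the outer $\rev$ and $\flip$ therefore turns the level-$\ell$ block into the list of all steps of $P$ at level $\ell$, read in reverse along $P$ and recorded with their true labels ($\N$ for a north step, $\E$ for an east step); ordering these blocks by increasing $\ell$ reproduces the block structure of $\tau$. Within block $\ell$, the $\N$'s are the north steps of the rows $j$ with $g_j=\ell$, listed in decreasing $j$; by the level formula $g_i$ these are exactly the $\N$'s of $\rev(\sigma^{(\ell)})$, and the $\E$'s are the level-$\ell$ east steps of $P$.

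The hard part will be matching the interleaving of the $\E$'s and $\N$'s inside each block, and accounting for the extra east steps. In reverse path order the level-$\ell$ east step at height $j'$ (there is at most one, present exactly when $g_{j'}<\ell\le g_{j'-1}+m$) occurs immediately before the north step of row $j'-1$ (present exactly when $g_{j'-1}=\ell$), so the $\E$/$\N$ pattern is governed entirely by these two adjacency conditions. The subtlety is that the east steps are indexed by \emph{heights} $j'$ whereas the symbols of $\sigma^{(\ell)}$ are indexed by \emph{rows} $j$ through the window $g_j\in\{\ell-m,\ldots,\ell\}$, so the identification is not term-by-term. I would compare the two words by locating each $\N$: for the $t$-th largest row $j$ with $g_j=\ell$, I would count the level-$\ell$ east steps lying above its north step and check that this equals the number of $\E$'s preceding the $t$-th $\N$ of $\tau^{(\ell)}$. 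Since both $\phi'(P)$ and the sweep image lie in $T_{n,k,m}$ they carry $k+mn$ east steps, whereas the window symbols supply only $mn$, so a total of $k$ extra east steps must appear, concentrated at the small levels $\ell\le k$; showing that these land in precisely the positions filled by the $k$ separator steps and by the relocation of the initial east step (the passage from $\rev(\sigma^{(\ell)})$ to $\E\,\rev(\tilde\sigma^{(\ell)})$ for $\ell>k$) — and verifying the boundary heights $j'=0$ and $j'=n$ — is where I expect the argument to require the most care.
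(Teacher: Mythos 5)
Your setup is sound and matches the paper's own first step: the paper likewise begins by rewriting $\flip\circ\rev\circ\sw^{-}_{1,-m}\circ\rev\circ\flip$ as a direct algorithm on $w=\mkwd(P)$ that assigns the level $k+my-x$ to each step (a shifted west--south convention) and sweeps levels $0,1,2,\dots$ right-to-left; your level formulas ($g_i$ for the row-$i$ north step, $k+mj-x$ for an east step at height $j$) agree with that, and your reduction to a level-by-level comparison of $\rho^{(\ell)}$ with $\tau^{(\ell)}$ is exactly the paper's target. But the proposal stops precisely at the theorem's actual content. The entire difficulty is the claim you defer to ``where I expect the argument to require the most care'': why the $\E$'s coming from level-$\ell$ east steps of $P$ (indexed by \emph{pairs} of consecutive rows via $g_{j'}<\ell\le g_{j'-1}+m$) interleave with the $\N$'s in exactly the same pattern as the $\E$'s of $\sigma^{(\ell)}$ (indexed by \emph{single} rows via $g_j\in\{\ell-m,\dots,\ell-1\}$). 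The paper proves this in Steps 2--6 by encoding each level-$\ell$ scan as a word in events A, B, C and showing, from the fact that the only way to cross level $\ell$ downward is an east step at level $\ell$ (event B) and the only way to cross back up is a north step at level in $\{\ell-1,\dots,\ell-m\}$ (event C), that B's and C's strictly alternate; this forced alternation is what makes the two interleavings coincide and simultaneously locates the unmatched B (the extra trailing $\E$) and, for $\ell>k$, the unmatched leading C. Nothing in your write-up supplies a substitute for this mechanism.

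Moreover, the specific check you propose is off by one as stated: for the $t$-th largest row $j$ with $g_j=\ell$, the number of level-$\ell$ east steps of $P$ above its north step exceeds by exactly $1$ the number of $\E$'s preceding the $t$-th $\N$ of $\tau^{(\ell)}$ (the discrepancy is the unmatched east step at the top of the level class, e.g.\ in the paper's example $\rho^{(2)}=\NE{ENENEE}=\E\,\tau^{(2)}$). Your counting identity can in fact be proved: with $A_{j'}=[g_{j'}\ge\ell]$ and $B_{j'}=[g_{j'}\ge\ell-m]$, the trapezoid inequality $g_{j'}\le g_{j'-1}+m$ gives $A_{j'}\le B_{j'-1}$, whence $(1-A_{j'})(B_{j'-1}-B_{j'})=B_{j'-1}-B_{j'}$ and the difference of the two counts telescopes to $B_j=1$. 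That inequality is the same geometric fact underlying the paper's ``every B is followed by a C''; until you prove some version of it and also pin down where the $k$ extra east steps and the relocated leading $\E$ (for $\ell>k$) land, the argument is a plan rather than a proof.
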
 
\begin{proof}
\noindent\textbf{Step 1.} Write $w=\mkwd(P)=w_1w_2\cdots w_{k+n+nm}$ and
$\sw'=\flip\circ\rev\circ\sw^{-}_{1,-m}\circ\>\rev\circ\flip$.  One
may routinely check that $\sw'(w)$ may be computed by the following
algorithm.  Let $l_1=k$, $l_{j+1}=l_{j}+m$ if $w_j=\N$, and
$l_{j+1}=l_{j}-1$ if $w_j=\E$. (Thus in this variation, a north step
$w_j$ from $(x,y)$ to $(x,y+1)$ has associated level $l_j=k+my-x\geq
0$, whereas an east step $w_j$ from $(x,y)$ to $(x+1,y)$ has
associated level $l_j=k+my-x>0$.  
Up to the shift by $k$, this is the
west-south convention for assigning levels.) 
Generate an output word
$y$ from left to right as follows.  For each level $L=0,1,2,\ldots$,
scan $w$ from right to left, and append the letter $w_j$ to the right
end of $y$ whenever an index $j\leq k+n+nm$ is scanned for which
$l_{j}=L$. For each $i\geq 0$, let $\rho^{(i)}$ be the subword of $y$
generated in the $L=i$ iteration of the algorithm.

  In the preceding example, the sequence of steps and levels is

\begin{center}
{\setlength{\tabcolsep}{1pt}
\begin{tabular}{ccccccccccccccccccccccccccc} 
E&N&N&E&E&N&E&E&E&E&E&N&N&E&E&E&N&N&E&E&E&N&E&E&E&E  &
\\2&1&3&5&4&3&5&4&3&2&1&0&2&4&3&2&1&3&5&4&3&2&4&3&2&1 &(0),
\end{tabular}}
\end{center}

\noindent
where the zero in parentheses is the level following the final east step.  
The subwords $\rho^{(i)}$ are
  \[ \rho^{(0)}=\NE{N},\ \rho^{(1)}=\NE{ENEN},\ \rho^{(2)}=\NE{ENENEE},\ 
  \rho^{(3)}=\NE{EENEENN},\ \rho^{(4)}=\NE{EEEEE},\ \rho^{(5)}=\NE{EEE}. \] 
By definition of the levels, one sees that the maximum level of any letter
in $w$ is the same value $M$ appearing in the definition of $\phi'(P)$.
Since $y=\sw'(w)=\rho^{(0)}\rho^{(1)}\cdots\rho^{(M)}$,
it will suffice to prove that $\rho^{(0)}=\tau^{(0)}$,
$\rho^{(i)}=\E\tau^{(i)}$ for $1\leq i\leq k$, and $\rho^{(i)}=\tau^{(i)}$ 
for all $i$ with $k<i\leq M$ (as illustrated by Example~\ref{ex:phi}). 
Define $\tilde{\rho}^{(i)}=\rev(\rho^{(i)})$ for all $i$; 
this is the word obtained 
by scanning $w$ from left to right and taking all letters at level $i$.
By reversing everything, it is enough to prove that
$\tilde{\rho}^{(0)}=\sigma^{(0)}$, $\tilde{\rho}^{(i)}=\sigma^{(i)}\E$ 
for $1\leq i\leq k$,
and $\tilde{\rho}^{(i)}=\tilde{\sigma}^{(i)}\E$ for all $i$ with $k<i\leq M$.

\medskip\noindent\textbf{Step 2.} 
  Fix a level $L=i$.  We define an \textbf{event
  sequence} in $\{\mathrm{A},\mathrm{B},\mathrm{C}\}^*$ associated
  with a left-to-right scan of level $i$.  It follows from Step 1 that
  the levels of the north steps of $P$ will be
  $g_0,g_1,\ldots,g_{n-1}$ in this order.  As we scan $w$ during this
  iteration, the following \textbf{events} may occur:
  \begin{itemize}
  \item[A.] We scan an $\N$ of $w$ at level $i$,
    which appends an $\N$ onto both $\sigma^{(i)}$ and $\tilde{\rho}^{(i)}$.
  \item[B.] We scan an $\E$ of $w$ at level $i$, which appends an $\E$
    onto $\tilde{\rho}^{(i)}$.
  \item[C.] We scan an $\N$ of $w$ with level in $\{i-1,i-2,\ldots,i-m\}$,
    which appends an $\E$ onto $\sigma^{(i)}$.
  \end{itemize}
  Consider the sequence of events A, B, C that occur during the $L=i$
  scan.  In our example, the $L=2$ scan has event sequence BCBCABCAB,
  whereas the $L=3$ scan has event sequence CAABCBCABCB.

\medskip\noindent\textbf{Step 3.} 
  We prove that $\tilde{\rho}^{(0)}=\sigma^{(0)}$.  
  For the $L=0$ scan,
  events B and C are impossible, since the path stays within the
  trapezoid.  So the event sequence consists of $j$ A's for some $j$, 
  and $\tilde{\rho}^{(0)}$ and $\sigma^{(0)}$ both consist of $j$ $N$'s.

\medskip\noindent\textbf{Step 4.} 
  For $0<i\leq M$, we analyze the possible transitions
  between events A, B, and C that may occur during the $L=i$
  scan. Note that events A and B can only occur when the level of the
  current character in $w$ is $\geq i$, whereas event C only occurs
  when this level is $<i$. Moreover, the only way to transition from a
  level $\geq i$ to a level $<i$ is via event B, and the only way to
  transition from a level $<i$ to a level $\geq i$ is via event C.
  Consequently, in the event sequence for $L=i$, every A (not at the
  end) can only be followed by A or B; every B (not at the end) can
  only be followed by C; and every C (not at the end) can only be
  followed by A or B.  The path $P$ ends at level $0<i$, so the event
  sequence must end in a B.

\medskip\noindent\textbf{Step 5.} 
  We prove that $\tilde{\rho}^{(i)}=\sigma^{(i)} \E$ for $1\leq i\leq k$. 
  Since $i\leq k$ and the origin has level $k$, the first letter
  in the event sequence must be A or B.  By Step 4, the event sequence
  is some rearrangement of A's and BC's, except there is an unmatched
  B at the end.  By definition of the events in Step 2, this means
  that $\tilde{\rho}^{(i)}$ and $\sigma^{(i)}$ agree, except for an 
  extra $\E$ at the end of $\tilde{\rho}^{(i)}$.

\medskip\noindent\textbf{Step 6.} 
 We prove that $\tilde{\rho}^{(i)}=\tilde{\sigma}^{(i)} \E$ 
  for $k<i\leq M$. Since $i>k$ and the origin has level $k$, the first letter 
  in the event sequence must be an unmatched C.  Thereafter, the event
  sequence consists of A's and matched BC pairs, with one unmatched B
  at the end. The initial C gives the initial $\E$ in $\sigma^{(i)}$ that is
  deleted to form $\tilde{\sigma}^{(i)}$. As in Step 5, we see that 
  $\tilde{\rho}^{(i)}$ and $\tilde{\sigma}^{(i)}$ agree, 
  except for an extra $\E$ at the end of $\tilde{\rho}^{(i)}$
  caused by the unmatched B.  
\end{proof}

The proof structure above can be readily adapted to show that other algorithms
based on area vectors are equivalent to suitable sweep maps. For this reason,
we will omit the details of these proofs in the remainder of this subsection.  
For instance, one can modify the preceding proof to show that the map 
$\phi_{n,0,1}$ 
(which acts on Dyck paths of order $n$) is also a sweep map.
\begin{theorem}
For all $n>0$ and $P\in T_{n,0,1}$, 
$\mkwd(\phi_{n,0,1}(P))= \flip\circ\rev\circ\sw^{-}_{1,-1}(\mkwd(P))$. 
\end{theorem}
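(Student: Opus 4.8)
The plan is to adapt the proof of Theorem~\ref{thm:trapz-vs-sweep}, simplified by the specialization $k=0$, $m=1$ and modified to account both for the absence of the inner $\rev\circ\flip$ and for the use of $\phi$ rather than $\phi'$. Write $w=\mkwd(P)$ and $\sw''=\flip\circ\rev\circ\sw^{-}_{1,-1}$. First I would record an algorithmic description of $\sw''$ analogous to Step~1 above. Assign each step of $\mkpath(w)$ its $(1,-1)$-level under the east-north convention, so a north step ending at $(x,y)$ has level $y-x$ and an east step ending at $(x,y)$ has level $y-x-1$. Because $P\in T_{n,0,1}$ is a Dyck path, every such level is nonnegative, so the negative-$k$ pass of $\sw^{-}_{1,-1}$ is vacuous; unwinding the definition gives $\sw^{-}_{1,-1}(w)$ as the concatenation, over $k=M,M-1,\ldots,1,0$, of the level-$k$ letters read right to left, where $M$ is the maximum level. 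Applying $\rev$ reverses both the order of the levels and the scan direction, and $\flip$ interchanges $\N$'s and $\E$'s, so $\sw''(w)=\flip(\eta^{(0)})\flip(\eta^{(1)})\cdots\flip(\eta^{(M)})$, where $\eta^{(i)}$ is the subword of $w$ consisting of the letters at level $i$, read left to right.

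Since $\phi(P)=\mkpath(\sigma^{(0)}\sigma^{(1)}\cdots\sigma^{(M)})$ when $k=0$ (no extra east steps are inserted, and $M$ coincides with the largest index with $\sigma^{(i)}$ nonempty, exactly as in the previous proof), it suffices to prove $\flip(\eta^{(i)})=\sigma^{(i)}$, equivalently $\eta^{(i)}=\flip(\sigma^{(i)})$, for every $i$. This is the analogue of the reduction to $\rho^{(i)}=\tau^{(i)}$ above, but now with no reversal of the pieces, precisely as one expects after dropping both the inner $\rev\circ\flip$ and the passage from $\sigma$ to $\tau$. Next I would compute levels in terms of the area vector $g=g(P)$: the north step in row $r$ sits at $x$-coordinate $r-g_r$, hence has east-north level $g_r+1$. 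Thus the $\N$'s of $\eta^{(i)}$ are exactly the north steps from rows with $g_r=i-1$, which match the $\N$'s of $\flip(\sigma^{(i)})$ coming from the entries $i-1$ of $z^{(i)}$. For the $\E$'s, I would observe that the sequence of levels $y-x$ of the lattice points visited by $\mkpath(w)$ is a Dyck walk from $0$ to $0$ with steps $\pm1$, in which north steps are up-steps and east steps are down-steps; in this language a north step at level $i$ is an up-crossing $i-1\to i$ and an east step at level $i$ is a down-crossing $i+1\to i$.

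The heart of the argument is the order/interleaving claim, which I expect to be the main obstacle. Here I would use alternation of crossings: since the walk begins and ends at $0$, the up-crossings $i\to i+1$ (north steps at level $i+1$, i.e.\ the rows with $g_r=i$) and the down-crossings $i+1\to i$ (east steps at level $i$) strictly alternate in path order, $r_1<e_1<r_2<e_2<\cdots$, giving a bijection between the east steps at level $i$ and the rows $r_1<r_2<\cdots$ with $g_r=i$; in particular the $\E$'s of $\eta^{(i)}$ are equinumerous with the $\E$'s of $\flip(\sigma^{(i)})$. To match the interleaving with the $\N$'s, I would note that a north step at level $i$, arising from a row $s$ with $g_s=i-1$, crosses between levels $i-1$ and $i$ and so can never lie strictly inside an excursion above level $i+1$; hence it precedes the down-crossing $e_j$ if and only if it precedes the up-crossing $r_j$, i.e.\ if and only if $s<r_j$ as rows. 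This is exactly the condition under which the corresponding $\N$ precedes the corresponding $\E$ in $\flip(\sigma^{(i)})$, both being read in row order off $z^{(i)}$, so $\eta^{(i)}=\flip(\sigma^{(i)})$. Alternatively, the same bookkeeping can be phrased through an event sequence in $\{\mathrm{A},\mathrm{B},\mathrm{C}\}^*$ with a transition analysis mirroring Steps~2--6, which is the adaptation the text alludes to.

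Finally, I would remark that the result could in principle be deduced from Theorem~\ref{thm:trapz-vs-sweep} directly, since for $m=1$ one has $\sw''=\sw'\circ\rev\circ\flip$ while $\phi$ and $\phi'$ differ only by reversing the pieces $\sigma^{(i)}$, so the two reversals ought to cancel. I expect, however, that executing this cleanly requires tracking how $\rev\circ\flip$ permutes the area vector and re-deriving the piecewise structure anyway, so the direct adaptation above is the more transparent route.
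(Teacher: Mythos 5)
Your proposal is correct, and it follows the route the paper intends: the paper gives no separate proof of this statement, saying only that the proof of Theorem~\ref{thm:trapz-vs-sweep} ``can be readily adapted,'' and your write-up supplies exactly that adaptation, with the same overall skeleton (algorithmic description of the composed map, reduction to a block-by-block identity $\flip(\eta^{(i)})=\sigma^{(i)})$, then an interleaving analysis within each level). Where you differ is in how the interleaving is verified: the paper's Steps 2--6 use an event sequence in $\{\mathrm{A},\mathrm{B},\mathrm{C}\}^*$ with transition rules, while you use the alternation of up-crossings $r_1<e_1<r_2<e_2<\cdots$ of the gap between levels $i$ and $i+1$ in the associated $\pm 1$ walk, plus the observation that a north step at level $i$ cannot lie strictly inside an excursion above level $i$. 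These are equivalent facts (the transition rules ``B is followed by C'' etc.\ encode the same alternation), but your version is arguably cleaner for $m=1$ since it produces the explicit order-preserving bijection $e_j\leftrightarrow r_j$ needed to compare $\eta^{(i)}$ with $\flip(\sigma^{(i)})$. One local slip: under the east-north convention an east step \emph{ending} at $(x,y)$ has level $y-x$, not $y-x-1$ (you presumably meant an east step starting at $(x,y)$); this contradicts your own later, correct statement that an east step at level $i$ is a down-crossing $i+1\to i$, but nothing downstream depends on the erroneous formula, so the argument stands. Your closing remark is also apt: deducing the result formally from Theorem~\ref{thm:trapz-vs-sweep} by cancelling the two reversals is not immediate, since $\rev\circ\flip$ scrambles the area vector, which is why the direct adaptation is the sensible path.
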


Similarly, let $\phi_{\HL}$ denote the map
described in~\cite[\S3, Bijections]{HL-park} that sends
unlabeled Dyck paths to unlabeled Dyck paths.

\begin{theorem}
For all $n>0$ and $P\in T_{n,0,1}$, 
$\mkwd(\phi_{\HL}(P))= \sw^{-}_{1,-1}(\mkwd(P))$. 
\end{theorem}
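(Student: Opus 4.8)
The plan is to deduce this statement from the immediately preceding theorem rather than rerun the full event-sequence argument. Recall from Section~\ref{subsec:minor-var} that $\rev$ and $\flip$ are commuting involutions, so $\flip\circ\rev$ is an involution as well. The preceding theorem gives $\mkwd(\phi_{n,0,1}(P))=\flip\circ\rev\circ\sw^{-}_{1,-1}(\mkwd(P))$; applying $\flip\circ\rev$ to both sides and using $(\flip\circ\rev)\circ(\flip\circ\rev)=\id$ rewrites this as $\sw^{-}_{1,-1}(\mkwd(P))=\flip\circ\rev(\mkwd(\phi_{n,0,1}(P)))$. Hence the theorem is equivalent to the purely combinatorial identity
\[
\mkwd(\phi_{\HL}(P))=\flip\circ\rev(\mkwd(\phi_{n,0,1}(P))),
\]
i.e.\ that the Haglund--Loehr map and Loehr's $\phi_{n,0,1}$ differ exactly by the transpose-and-reverse symmetry, which is reflection of the Dyck path across the antidiagonal.

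To prove this identity I would recall the definition of $\phi_{\HL}$ from \cite{HL-park} and compare it, block by block, with the area-vector construction of $\phi_{n,0,1}$ specialized to $k=0$, $m=1$. Both maps read the input path by diagonals: the contribution of diagonal $i$ is built from the entries of the area vector $g(P)$ equal to $i$ or to $i-1$, with an entry $i$ producing a north step and an entry $i-1$ producing an east step. The two maps differ only in how these diagonal blocks are oriented and concatenated, and the content of the identity is that reversing the entire output word of $\phi_{n,0,1}$ and interchanging $\N\leftrightarrow\E$ reproduces $\phi_{\HL}$. Equivalently, one checks that $\phi_{\HL}$ emits the antidiagonal reflection, diagonal-block for diagonal-block, of what $\phi_{n,0,1}$ emits.

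As the remark following Theorem~\ref{thm:trapz-vs-sweep} indicates, one can instead prove the theorem directly by adapting Steps 1--6 of that proof to the present map. Specializing to $k=0$, $m=1$ makes $\sw^{-}_{1,-1}$ simply sort the steps of $\mkpath(w)$ by their diagonal level $y-x$ under the E-N convention, sweeping each diagonal once; with $m=1$ the event alphabet $\{\mathrm{A},\mathrm{B},\mathrm{C}\}$ records transitions between the two adjacent diagonals $i$ and $i-1$, and the $\mathrm{A}$-versus-$\mathrm{BC}$ pairing matches the $i$-th swept block with the $i$-th block of $\phi_{\HL}$. The outer conjugation $\flip\circ\rev\circ(-)\circ\rev\circ\flip$ present for $\phi'_{n,k,m}$ disappears here precisely because $\phi_{\HL}$ already uses the output orientation that $\sw^{-}_{1,-1}$ prescribes. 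Either way, the combinatorial core is the $\mathrm{A}/\mathrm{BC}$ block-matching already carried out in full above, so no new idea is needed; the main obstacle is purely one of bookkeeping --- confirming which endpoint supplies each step's level, the scan direction within each diagonal, and the order in which diagonals are visited, and then verifying that $\phi_{\HL}$ as originally defined reads diagonals in exactly the orientation $\sw^{-}_{1,-1}$ demands (equivalently, the transpose-reverse of the orientation used by $\phi_{n,0,1}$).
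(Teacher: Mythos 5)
Your proposal is correct and follows essentially the same route as the paper: the paper likewise derives this theorem from the one for $\phi_{n,0,1}$ by noting that $\mkptn(\phi_{n,0,1}(P))$ is the transpose of $\mkptn(\phi_{\HL}(P))$ and that $\flip\circ\rev(\mkwd(\pi))=\mkwd(\pi')$, which is exactly your identity $\mkwd(\phi_{\HL}(P))=\flip\circ\rev(\mkwd(\phi_{n,0,1}(P)))$. The only difference is that the paper asserts this transpose relationship without proof, whereas you sketch a block-by-block verification (and also note the alternative of rerunning the event-sequence argument directly), both of which are sound.
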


We note that the partition $\mkptn(\phi_{n,0,1}(P))$ is
the transpose of the partition $\mkptn(\phi_{\HL}(P))$. Since
\[ \flip\circ\rev(\mkwd(\pi))=\mkwd(\pi') \]
for all partitions $\pi$ (where $\pi'$ denotes the transpose of $\pi$), 
the theorem for $\phi_{\HL}$
follows from the theorem for $\phi_{n,0,1}$ and vice versa.

\subsubsection{Square Lattice Paths}

In~\cite{LW-square}, Loehr and Warrington modified the map
$\phi_{\HL}$ to obtain a bijection $\phi_{\LW}$ on $\patR(\N^n\E^n)$,  
the set of lattice paths in an $n\times n$ square.
Given $P\in\patR(\N^n\E^n)$, we define its \textbf{area vector}
$g(P)=(g_0,g_1,\ldots,g_{n-1})$ by letting $g_i+n-i$ be the number
of complete squares in the strip $\{(x,y): x\geq 0, i\leq y\leq i+1\}$
that lie to the right of $P$ and to the left of $x=n$.  (This
reduces to the previous area vector if $P$ is a Dyck path.) 
The area vectors of paths in $\patR(\N^n\E^n)$ are
characterized by the following properties: $g_0\leq 0$; $g_i+n-i\geq
0$ for $0\leq i<n$; and $g_i\leq g_{i-1}+1$ for $1\leq i<n$.

Given $P\in\patR(\N^n\E^n)$, we define a new path $\phi_{\LW}(P)$
as follows. For all $i\in\Z$, let $z^{(i)}$ be the subword
of $g(P)$ consisting of all occurrences of $i$ and $i-1$.
Create words $\sigma^{(i)}$ from $z^{(i)}$ by replacing each $i$
by $\E$ and each $i-1$ by $\N$.
For all $i\geq 0$, let $\tau^{(i)}$ be the reversal of $\sigma^{(i)}$.  
For all $i<0$, $\sigma^{(i)}$ must end in $\E$,
so we can write $\sigma^{(i)}=\tilde{\sigma}^{(i)}\E$; 
let $\tau^{(i)}=\rev(\tilde{\sigma}^{(i)})\E$. Finally, define 
\[ \tau=\tau^{(-1)}\tau^{(-2)}\cdots \tau^{(-n)}
   \tau^{(n)}\cdots \tau^{(2)}\tau^{(1)}\tau^{(0)}, \]
and set $\phi_{\LW}(P)=\mkpath(\tau)$.

\begin{example}
Let $P\in\patR(\N^{16}\E^{16})$ be such that 
\[\mkwd(P)=w=\NE{ENEENENNNNEENEEEENNNENEENNNNENEE}.\]
Then $g(P)=(-1,-2,-2,-1,0,1,0,-3,-2,-1,-1,-2,-1,0,1,1)$, so (for
instance) $z^{(1)}=010011$, $\sigma^{(1)}=\NE{NENNEE}$, 
$\tau^{(1)}=\NE{EENNEN}$, 
$z^{(-2)}=\text{$-2$ $-2$ $-3$ $-2$ $-2$}$, 
$\sigma^{(-2)}=\NE{EENEE}$, $\tau^{(-2)}=\NE{ENEEE}$,
and so on.  Next, $\tau$ is the concatenation of
the words $\tau^{(-1)}=\NE{NEENENNEE}$, $\tau^{(-2)}=\NE{ENEEE}$,
$\tau^{(-3)}=\NE{E}$, $\tau^{(2)}=\NE{NNN}$, $\tau^{(1)}=\NE{EENNEN}$,
and $\tau^{(0)}=\NE{ENNNEENN}$, and $\phi_{\LW}(P)=\mkpath(\tau)$.
On the other hand, the reader can
check that $\tau=\sw^{-}_{1,-1}(w)$.  In fact, for all $i\in\Z$,
the subword $\tau^{(i)}$ is precisely the subword $\rho^{(i)}$
of letters in $\sw^{-}_{1,-1}(w)$ 
coming from letters in $w$ with $(1,-1)$-level (using the E-N
convention) equal to $i$.  One can prove this always happens, by
adapting the ideas in the proof of Theorem~\ref{thm:trapz-vs-sweep},
to obtain the following theorem.
\end{example}

\begin{theorem} 
For all $n>0$ and all $P\in\patR(\N^n\E^n)$, 
$\mkwd(\phi_{\LW}(P))=\sw^{-}_{1,-1}(\mkwd(P))$.  
\end{theorem}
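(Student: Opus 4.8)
The plan is to mirror the proof of Theorem~\ref{thm:trapz-vs-sweep}, reducing the claim to a level-by-level comparison. Write $w=\mkwd(P)$ and, for each $i\in\Z$, let $\rho^{(i)}$ be the subword of $\sw^{-}_{1,-1}(w)$ consisting of the letters of $w$ whose E-N $(1,-1)$-level equals $i$. Since $\sw^{-}_{1,-1}$ emits these blocks in the order $i=-1,-2,\ldots$ and then $i=\ldots,1,0$, and since every vertex of a path in $\patR(\N^n\E^n)$ has level in $[-n,n]$, the output is the concatenation $\rho^{(-1)}\cdots\rho^{(-n)}\rho^{(n)}\cdots\rho^{(1)}\rho^{(0)}$, which matches the block order defining $\tau$. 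So it suffices to prove $\tau^{(i)}=\rho^{(i)}$ for every $i$. The first preliminary step is to record the dictionary between the area vector and step-levels: if the north step in strip $j$ runs from $(x_j,j)$ to $(x_j,j+1)$, then the count defining $g_j$ gives $g_j=j-x_j$, so this north step has E-N level $g_j+1$. Consequently the entries of $g(P)$ equal to $i-1$ (which become $\N$ in $\sigma^{(i)}$) are exactly the north steps at level $i$, and the entries equal to $i$ (which become $\E$) are exactly the north steps at level $i+1$; moreover the north steps occur in $w$, read left to right, in strip order $j=0,1,\ldots,n-1$, i.e., in the reading order of $g(P)$.

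Next I would set up the event bookkeeping. Writing $\tilde\rho^{(i)}=\rev(\rho^{(i)})$ for the left-to-right level-$i$ subword of $w$, I scan $w$ once from left to right and record three events: A (a north step at level $i$, contributing $\N$ to both $\sigma^{(i)}$ and $\tilde\rho^{(i)}$), B (an east step at level $i$, contributing $\E$ to $\tilde\rho^{(i)}$), and C (a north step at level $i+1$, contributing $\E$ to $\sigma^{(i)}$); all other letters are ignored. Because north steps appear in strip order, the A- and C-events reproduce $\sigma^{(i)}$ exactly, while the A- and B-events reproduce $\tilde\rho^{(i)}$, so everything hinges on how these events interleave. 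To control this, I track the \emph{level walk} $\ell_0=0,\ell_1,\ldots,\ell_{2n}=0$ recording the $(1,-1)$-level of each successive vertex (a north step raises the level by $1$, an east step lowers it by $1$). In these terms A is an up-step landing at $i$, B a down-step landing at $i$, and C an up-step landing at $i+1$.

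The heart of the argument is the transition analysis. Examining the walk immediately after each event — level $i$ after A or B, level $i+1$ after C — and noting that the only relevant re-entries are $i-1\to i$ (A), $i\to i+1$ (C), and $i+1\to i$ (B), I would show that each A or B is followed (if not last) by an A or a C, while each C is followed by a B. Hence in the interior every C is immediately adjacent to a following B, so the B's and C's pair into contiguous CB-blocks with the A's free; and because the $\E$ contributed by a C (to $\sigma^{(i)}$) and the $\E$ contributed by its partner B (to $\tilde\rho^{(i)}$) sit in the same position relative to the surrounding A's, the two words agree except possibly for unmatched boundary events. Pinning these down using $\ell_0=\ell_{2n}=0$: for $i\geq 0$ the walk begins and ends at level $0\leq i$, so any B is preceded by a C and any C is followed by a B (no unmatched boundary events), whence $\sigma^{(i)}=\tilde\rho^{(i)}$ and $\tau^{(i)}=\rev(\sigma^{(i)})=\rho^{(i)}$. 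For $i<0$ the walk begins and ends at level $0\geq i+1$, so the first relevant event is an unmatched B and the last is an unmatched C; this gives $\sigma^{(i)}=\tilde\sigma^{(i)}\E$ (confirming in passing that $\sigma^{(i)}$ ends in $\E$, as the construction requires) and $\tilde\rho^{(i)}=\E\,\tilde\sigma^{(i)}$, so that $\rho^{(i)}=\rev(\tilde\rho^{(i)})=\rev(\tilde\sigma^{(i)})\E=\tau^{(i)}$.

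I expect the main obstacle to be the boundary casework that splits on the sign of $i$: the transition rules themselves are a quick finite check, but correctly identifying which boundary events are unmatched — and verifying that this is exactly what turns $\sigma^{(i)}$ into $\tau^{(i)}$ in each regime — is where the two different definitions of $\tau^{(i)}$ (for $i\geq 0$ versus $i<0$) get reconciled. A secondary point requiring care is the claim that a matched C and B contribute their $\E$'s in corresponding positions; this is immediate once one observes that a matched pair is literally adjacent in the event sequence, but it is the crux of why sorting by level reproduces the area-vector construction.
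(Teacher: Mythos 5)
Your proof is correct, and it is precisely the adaptation of the event-sequence argument from Theorem~\ref{thm:trapz-vs-sweep} that the paper itself invokes here (the paper offers no independent proof of this theorem, only the remark that one can adapt that argument). Your dictionary $g_j=j-x_j$ (so a north step in strip $j$ has E-N level $g_j+1$), the transition rules $\mathrm{A},\mathrm{B}\to\{\mathrm{A},\mathrm{C}\}$ and $\mathrm{C}\to\mathrm{B}$, and the boundary analysis splitting on the sign of $i$ (no unmatched events for $i\geq 0$; an unmatched leading B and trailing C for $i<0$) correctly reproduce the intended bookkeeping, so nothing essential is missing.
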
 

\subsubsection{Schr\"oder Lattice Paths}

A \textbf{Schr\"oder path} of order $n$ is a path from the
origin $(0,0)$ to $(n,n)$, never going below $y=x$, with the
allowed steps being north steps of length $1$, east steps
of length $1$, and a northeast step of length $\sqrt{2}$.
In~\cite[Theorem 6]{EHKK}, Egge, Haglund, Killpatrick, and Kremer
extend $\phi_{\HL}$ to a bijection $\phi_{\EHKK}$ acting
on Schr\"oder paths.

\begin{theorem} 
 After converting from paths to words, $\phi_{\EHKK}$ is
 the sweep map $\sw_{\wt}$ associated to the alphabet $A=\{\N,\D,\E\}$
 with weight function $\wt(\N) = 1$, $\wt(\D) = 0$, and $\wt(\E) = -1$.
\end{theorem}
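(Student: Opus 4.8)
The plan is to follow the event-sequence strategy of the proof of Theorem~\ref{thm:trapz-vs-sweep}, extended from two letters to the three-letter alphabet $\{\N,\D,\E\}$. First I would unpack the definition of $\phi_{\EHKK}$ from \cite[Theorem 6]{EHKK} and recast it, exactly as in the preceding subsections, as an area-vector algorithm: from a Schr\"oder path $P$ one extracts a vector recording the number of area cells in each row between $P$ and the main diagonal together with the positions of the diagonal steps (encoded by a third symbol), and the output word is assembled by grouping entries into level-subwords $\sigma^{(i)}$, reversing inside each level, and concatenating the $\sigma^{(i)}$ in decreasing order of $i$. The second preliminary step is to identify the level statistic used by $\phi_{\EHKK}$ with the $\wt$-level of the sweep map. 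The key observation is that, with $\wt(\N)=1$, $\wt(\D)=0$, $\wt(\E)=-1$, the level $l_i$ of the $i$-th letter of $w=\mkwd(P)$ equals $y_i-x_i$, the diagonal index of the endpoint of the $i$-th step: a north step ends one diagonal higher, an east step one diagonal lower, and a diagonal step stays put. Thus a Schr\"oder path is exactly a word all of whose $\wt$-levels are nonnegative, and sweeping at level $i$ collects precisely the steps whose tip lies on the diagonal $y-x=i$.

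Because all levels are nonnegative, only the phase $k=\infty,\ldots,0$ of $\sw_{\wt}$ is active, so $\sw_{\wt}(w)=\rho^{(M)}\cdots\rho^{(1)}\rho^{(0)}$, where $\rho^{(i)}$ is the right-to-left subword of letters of $w$ at level $i$ and $M$ is the maximal level. Writing $\tilde{\rho}^{(i)}=\rev(\rho^{(i)})$ for the corresponding left-to-right subword, it then suffices (after checking that the concatenation order and the per-level reversals in the definition of $\phi_{\EHKK}$ agree with those of the sweep, just as in the $\phi_{\HL}$ case, which carries no $\rev$ or $\flip$ wrapping) to prove $\tilde{\rho}^{(i)}=\sigma^{(i)}$ level by level.

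The heart of the matter is the transition analysis of Steps 4--6, now carried out with a fourth event type. Fixing a level $i$ and scanning $w$ from left to right, I would define an $\mathrm{A}$-event when an $\N$ lands on diagonal $i$ (an up-crossing from $i-1$), a $\mathrm{B}$-event when an $\E$ lands on diagonal $i$ (a down-crossing from $i+1$), a $\mathrm{C}$-event for the crossings that mark when the scan leaves or re-enters the set of steps contributing to $\sigma^{(i)}$, and a new $\mathrm{D}$-event for a diagonal step sitting on diagonal $i$. Since $\wt(\D)=0$, a $\mathrm{D}$-event neither raises nor lowers the current diagonal, so the transition rules of Step 4 gain one extra case: a $\mathrm{D}$-event may be both preceded and followed by letters at level $\geq i$. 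I would then show, as in Steps 5--6, that the $\mathrm{A}$-, $\mathrm{B}$-, and $\mathrm{D}$-events at level $i$ occur in exactly the order in which $\phi_{\EHKK}$ writes the corresponding letters of $\sigma^{(i)}$, with the $\mathrm{C}$-events accounting for the level boundaries; this yields $\tilde{\rho}^{(i)}=\sigma^{(i)}$ for every $i$, and hence the theorem, with any residual off-by-one at a level boundary (the analogue of the trailing $\E$ in Steps 5--6) absorbed exactly as there.

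The main obstacle is precisely the bookkeeping of the diagonal steps, since inserting weight-$0$ letters destroys the strict alternation (every $\mathrm{B}$ followed by a $\mathrm{C}$, and so on) that pinned down the subwords in the two-letter proof. The cleanest way to isolate this difficulty is to observe that deleting every $\D$ from both $w$ and $\sw_{\wt}(w)$ recovers the two-letter sweep, hence $\phi_{\HL}$ on the underlying Dyck skeleton: removing a $\D$ changes no subsequent level because $\wt(\D)=0$, and the $\D$'s swept at level $i$ are interleaved only within $\rho^{(i)}$. I would therefore reduce the argument to (i) the already-known two-letter statement and (ii) the claim that $\phi_{\EHKK}$ is compatible with the same $\D$-deletion and records each diagonal step at the diagonal it occupies, in the same right-to-left order the sweep uses. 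Establishing that compatibility from the definition in \cite{EHKK} is the one genuinely new verification; granting it, the placement of the diagonal steps inside each level-subword is forced and the identification with $\sigma^{(i)}$ follows.
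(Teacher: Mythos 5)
Your proposal matches the paper's intent exactly: the paper gives no separate proof of this theorem, stating only that the event-sequence argument of Theorem~\ref{thm:trapz-vs-sweep} ``can be readily adapted,'' and your extension of that argument with a fourth event type for the weight-zero letter $\D$ is precisely that adaptation. Your further observation that deleting the $\D$'s commutes with $\sw_{\wt}$ (since $\wt(\D)=0$ leaves all subsequent levels unchanged), which reduces the claim to the known two-letter case plus a compatibility check on the definition of $\phi_{\EHKK}$ from \cite{EHKK}, is a sound and arguably cleaner way to organize the bookkeeping than redoing the full transition analysis.
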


\subsection{An Algorithm Based on Hook-Lengths}
\label{subsec:zetamap}

Throughout this subsection, fix positive integers $a$ and $b$ with 
$\gcd(a,b)=1$.  In~\cite{AHJ}, D.~Armstrong, C.~Hanusa and B.~Jones 
investigate the combinatorics of $(a,b)$-cores.  In the process, they define 
a map $\zetamap: \ptnD_{b,-a}(a,b) \rightarrow \ptnD_{b,-a}(a,b)$. 
For $\pi\in\ptnD_{b,-a}(a,b)$, the partition
$\zetamap(\pi)$ is defined in two stages.  First, create a partition
$\nu = \drewnu(\pi)$ as follows.  Consider the levels of all lattice
squares lying above $by=ax$ and below the path $\mkpath(\pi)$. Since
$\gcd(a,b)=1$, these levels must all be distinct.
Sort these levels into increasing order, and write 
them in a column from bottom to top. Let $\nu = \drewnu(\pi)$ 
be the unique partition such that these levels are the hook-lengths of
the cells in the first column. (Recall that the \textbf{hook} of a cell $c$ in
a partition diagram consists of $c$ itself, all cells below $c$ 
in its column, and all cells right of $c$ in its row. The \textbf{hook-length}
of $c$, denoted $h(c)$, is the number of cells in the hook of $c$.)

The second stage maps $\nu$ to a new partition $\rho = \drewrho(\nu)$
as follows.  There will be one nonzero row of $\rho$ for each row of $\nu$
whose first-column hook-length is the level of a square directly east of
a north step of $\mkpath(\pi)$.  
To determine the length of each row in $\rho$, count the number of cells
of $\nu$ in the corresponding row whose hook-length is less than or
equal to $b$.  The $\zetamap$ map is then defined by $\zetamap(\pi) =
\drewrho \circ \drewnu(\pi)$. See Figure~\ref{fig:drewmap-ex} for
an example.

\begin{figure}[htbp]
  \centering 
      {\scalebox{.3}{\includegraphics{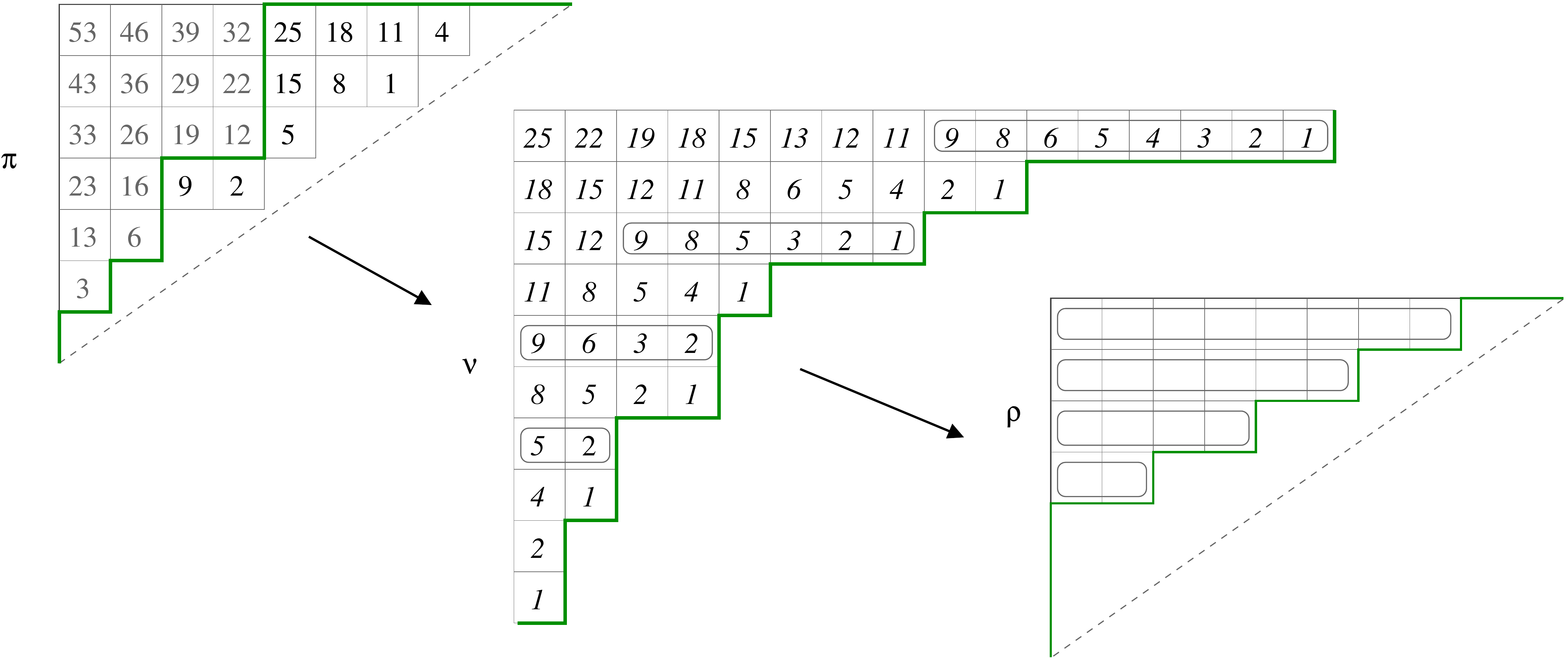}}}
      \caption{Example of the $\zetamap$ map applied to $\pi = (4,4,4,2,2,1)$
        for $a=7$ and $b=10$.  The partitions $\nu=\drewnu(\pi)$
        and $\rho=\drewrho(\nu)=\zetamap(\pi)=(8,6,4,2)$ are shown.  
        Each cell of $\nu$ is labeled with its hook-length.} 
  \label{fig:drewmap-ex}
\end{figure}

\begin{theorem}\label{thm:drew-sweep} 
  For all $\pi\in \ptnD_{b,-a}(a,b)$, 
  $\mkwd(\zetamap(\pi)) = \sw^{+}_{b,-a}\circ\rev(\mkwd(\pi))$.
\end{theorem}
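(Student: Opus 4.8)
The plan is to rewrite the right-hand map in a form that exposes its sorting behavior, and then match the resulting partition against $\drewrho\circ\drewnu(\pi)$ row by row. First I would invoke Table~\ref{tab:sweeps}: the composite $\sw^{+}_{b,-a}\circ\rev$ is computed by assigning to each step of $\mkpath(\pi)$ its \emph{west-south} $(b,-a)$-level — the step from $P_{i-1}$ to $P_i$ gets the level $by-ax$ of its southwest endpoint $P_{i-1}=(x,y)$ — and then listing the steps by increasing level, starting from level $ba+(-a)b=0$ and sweeping right-to-left within each level. The next observation is that these levels are pairwise distinct: if $P_{i-1}$ and $P_{j-1}$ had equal level, then the difference of the two points would be an integer multiple of $(b,a)$, which (since $\gcd(a,b)=1$ and both points lie in the box weakly above $by=ax$) forces $\{P_{i-1},P_{j-1}\}=\{(0,0),(b,a)\}$; but $(b,a)=P_n$ is not among $P_0,\dots,P_{n-1}$. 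Hence the sweep direction is irrelevant and $\sw^{+}_{b,-a}\circ\rev(\mkwd(\pi))$ is exactly the list of steps of $\mkpath(\pi)$ sorted by increasing west-south level. Reading off $\mkptn$ of this word, the $k$-th north step in increasing level order, say at level $L$, contributes a part equal to the number of east steps whose level is below $L$.

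The heart of the argument is a structural description of levels. Recording the point-levels $0=\mathrm{level}(P_0),\dots,\mathrm{level}(P_n)=0$ as a walk that rises by $b$ on each $\N$, falls by $a$ on each $\E$, and stays $\geq 0$, I would track three kinds of positive integers: north-levels (west-south levels of $\N$-steps), east-levels, and the levels of the area squares used to build $\nu=\drewnu(\pi)$. Because $b$ is invertible modulo $a$, each residue class modulo $a$ contains exactly one north-level $N_r$, and I claim that within that class the integers read, in increasing order, first all the area-square levels (these are precisely the positive integers of the class smaller than $N_r$), then $N_r$ itself, then a contiguous block of east-levels, and finally only ``unused'' values. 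Two facts drive this: every $\E$-step has west-south level $\geq a$ (equivalently its east endpoint has nonnegative $(b,-a)$-level, which is the Dyck condition), with equality exactly for the final $\E$-step; and the square directly east of an $\N$-step at level $L$ has level $L-a$, so it lies above $by=ax$ precisely when $L>a$, and it is the largest area-square level in its residue class. In particular the $\N$-steps with $L<a$ precede every $\E$-step and give the zero parts, matching the fact that $\drewrho$ creates a nonzero row only for area squares lying east of a north step.

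It remains to match a single positive part. Fix an $\N$-step at level $L>a$; its east square has level $\beta:=L-a$, which is the first-column hook-length of the corresponding row of $\nu$. Using the classical description of hook-lengths — in a row whose first-column hook-length is $\beta$, the hook-lengths are exactly $\{1,\dots,\beta\}$ with the values $\beta-\mu$ removed as $\mu$ runs over the smaller first-column hook-lengths, i.e.\ over the area-square levels $\mu<\beta$ — the number of cells $c$ in that row with $h(c)\leq b$ equals $\min(\beta,b)-\#\{\mu\in\mathrm{AL}:\beta-b\leq\mu\leq\beta-1\}$, where $\mathrm{AL}$ is the set of area-square levels. Thus the theorem reduces to the level identity
\[
\#\{\text{east-levels}<L\}=\#\bigl\{m\in[\max(0,\,L-a-b),\,L-a-1]:m\notin\mathrm{AL}\bigr\}.
\]
I would prove this by induction on $L$, comparing increments: as $L$ increases by $1$, the left side grows by $[\,L\text{ is an east-level}\,]$, while the right side grows by $[\,L-a\notin\mathrm{AL}\,]-[\,L-a-b\notin\mathrm{AL}\,]$ (the second indicator suppressed once the lower endpoint is clamped at $0$). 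The structural description translates each indicator into a statement about where $L$, $L-a$, and $L-a-b$ sit relative to the north-level of their residue classes, and a short case check shows the increments agree; the base case $L\leq a$ is trivial, since both sides vanish.

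The main obstacle I anticipate is exactly this last bookkeeping: reconciling the two increments when one of $L-a$, $L-a-b$ is nonpositive or sits at a boundary value near $L=a+b$, and verifying that the putative ``$-1$'' increment on the right can never appear opposite a ``$0$'' on the left (this uses that $N_r\geq b$ whenever the class of $r$ contributes no east-step). All of this is governed by the residue-class layout — area levels forming an initial segment below $N_r$, east-levels a block just above it — so the difficulty is organizational rather than conceptual; once that per-class layout is established cleanly, the identity, and hence the theorem, follows.
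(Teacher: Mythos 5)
Your reduction is correct, and the key counting identity you arrive at,
\[
\#\{\text{east-levels}<L\}=\#\bigl\{m\in[\max(0,L-a-b),\,L-a-1]:m\notin\mathrm{AL}\bigr\},
\]
is true (it checks out on the paper's running example $a=7$, $b=10$, $\pi=(4,4,4,2,2,1)$), so modulo the bookkeeping you defer, this is a valid proof. It is, however, a genuinely different route from the paper's. The paper factors $\zetamap$ through an intermediate infinite word $z_0z_1z_2\cdots$ that records, for \emph{every} level $i\geq 0$, whether the square at level $i$ is an area square; it then proves three lemmas: that this word encodes $\drewnu(\pi)$ (the same hook-length/first-column fact you invoke), that the sweep output is exactly the subword of $z$ at indices $i$ with $i+a$ a step level, and that the discarding done by $\drewrho$ (cells of hook-length $>b$) matches the discarding of non-frontier squares via an explicit bijection $j\mapsto j-b$. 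Your version skips the intermediate word and computes both partitions part-by-part, pushing all the content into one telescoping identity. Two remarks on closing your remaining gap. First, your per-residue-class-mod-$a$ layout needs a small patch for the class of $0$: the east steps in that class live at height $a$ (the top row), not at the height of the corresponding north step, though the conclusion (a contiguous block $N_0+a,N_0+2a,\dots$ just above $N_0=0$) still holds. Second, the increment identity you need is exactly $[L\text{ is an east-level}]=[L-a\in\Delta]-[L-a-b\in\Delta]$, where $\Delta=\ZZ_{\geq0}\setminus\mathrm{AL}$; this follows immediately from the facts that $L$ is an east-level iff the square at level $L-a$ sits immediately above the path, that such a square is characterized by $L-a\in\Delta$ and $L-a-b\notin\Delta$, and that $\Delta+b\subseteq\Delta$ (moving a square one row north preserves being north/west of the path). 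In particular the feared ``$-1$'' increment never occurs for any $L$, not just opposite a ``$0$'' on the left. Organizing the argument around this shift-by-$b$ stability — which is precisely the paper's $j\mapsto j-b$ bijection — is much cleaner than the mod-$a$ case analysis you propose, and I would recommend rewriting Step 3 of your plan that way.
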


To prove this theorem, we will introduce alternate formulations
of the maps $\drewnu$ and $\drewrho$, denoted $\newnu$ and $\newrho$,
that focus attention on the lattice paths making up the frontiers
of $\pi$, $\nu$, and $\rho$. This will enable us to compare the
action of the $\zetamap$ map on partitions to the action of the sweep map
on lattice paths.  
To start, define $\newnu: \ptnD \rightarrow \ptnR$ by setting $\newnu(\pi) =
\mkptn(z_0z_1z_2\cdots)$, where $z_0=E$, and for all $i > 0$, 
\begin{equation*}
  z_i = 
  \begin{cases}
    \N, & \text{if the square with level $i$ lies between $\mkpath(\pi)$ 
      and the line $by=ax$},\\
    \E, & \text{otherwise.}
  \end{cases}
\end{equation*}
Since $\mkpath(\pi)$ begins at $(0,0)$ and ends at $(b,a)$, we must
have $z_i = \E$ for all $i > ab-a-b$.

\begin{lemma}\label{lem:nu}
  For all $\pi \in \ptnD$, $\newnu(\pi) = \drewnu(\pi)$.
\end{lemma}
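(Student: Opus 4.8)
The plan is to reduce the lemma to a single standard fact about the bijection $\mkptn$ and first-column hook-lengths. A partition is determined by the set of hook-lengths appearing in its first column, since these form a set of distinct positive integers and, conversely, any finite set of distinct positive integers is the first-column hook-length set of a unique partition. Hence it suffices to verify that $\newnu(\pi)$ and $\drewnu(\pi)$ have the same first-column hook-lengths. By construction, the first-column hook-lengths of $\drewnu(\pi)$ are exactly the levels in the set $S$ of all squares lying between $\mkpath(\pi)$ and the line $by=ax$; these are distinct because $\gcd(a,b)=1$. So everything comes down to computing the first-column hook-lengths of $\newnu(\pi)=\mkptn(z)$.

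The fact I would isolate and prove is the following: for any word $w=w_0w_1w_2\cdots$ over $\{\N,\E\}$ having finitely many $\N$'s, the partition $\mkptn(w)$ has first-column hook-lengths exactly equal to the set $\{i\geq 0: w_i=\N\}$ of $0$-indexed positions of its $\N$-steps. Granting this, the lemma is immediate: by the definition of $z$ we have $z_i=\N$ precisely when there is a square of level $i$ lying between $\mkpath(\pi)$ and $by=ax$, and distinctness of the relevant levels makes this unambiguous, so $\{i: z_i=\N\}=S$. Thus $\mkptn(z)$ has first-column hook-length set $S$, which matches $\drewnu(\pi)$, and the two partitions coincide. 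Observe that $z_0=\E$ is consistent here, since $0$ is never a hook-length.

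To establish the fact, I would trace $\mkpath(w)$ and look at the $\N$-step at an arbitrary $0$-indexed position $i$, running from $(x,y)$ to $(x,y+1)$, so that $i=x+y$. Let $\ell$ denote the total number of $\N$-steps, equivalently the number of rows of $\lambda:=\mkptn(w)$. This step forms the right edge of the row of $\lambda$ at height $[y,y+1]$, which is row $\ell-y$ counted from the top and has exactly $y$ rows strictly below it. Because the path is monotone and keeps the partition to its upper-left, the number $x$ of $\E$-steps preceding position $i$ equals the length $\lambda_{\ell-y}$ of that row. The first-column hook-length of this cell is its arm plus its leg plus one, namely $(\lambda_{\ell-y}-1)+y+1=\lambda_{\ell-y}+y=x+y=i$. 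Ranging over all $\N$-steps then shows that the set of $\N$-positions equals the set of first-column hook-lengths, as claimed.

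The routine-but-delicate part, and the only place I expect to spend care, is pinning down the conventions underlying this fact: the $0$-indexing, the direction of reading (increasing level corresponds to moving from $(0,0)$ up and to the right), and in particular the identification ``number of preceding $\E$-steps $=$ length of the bounded row,'' which rests on monotonicity of the path and on the partition lying to its upper-left. I anticipate no genuine difficulty beyond this bookkeeping. The one conceptual point worth stating explicitly is that the fact concerns only $\mkptn(z)$ and its own boundary; it is logically independent of the separate region of squares beneath $\mkpath(\pi)$ that was used to define the level set $S$.
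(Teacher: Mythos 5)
Your proof is correct and is essentially the paper's argument repackaged: the paper compares the two partitions row by row via the identity (length of the $i$-th row from the bottom) $=$ (first-column hook-length) $-\, i + 1$ $=$ (number of $\E$'s preceding the $i$-th $\N$ of $z$), which is exactly your computation $(\lambda_{\ell-y}-1)+y+1=x+y=i$ read in the other direction, with the reduction to first-column hook-length sets playing the role of the paper's opening observation that the two partitions have the same number of rows indexed by the same levels. The one point to make explicit is that your standalone fact about $\mkptn(w)$ as literally stated fails for words beginning with $\N$ (a zero-length bottom row makes the leg smaller than $y$), so it needs the hypothesis that every row is nonempty; this is guaranteed in the application by $z_0=\E$, as you note.
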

\begin{proof}
  We first observe that the partitions $\newnu(\pi)$ and
  $\drewnu(\pi)$ will have the same number of (positive-length) rows.
 For, on one hand,
  the first-column hook-lengths in $\drewnu(\pi)$ will be the levels
  of the squares between $\mkpath(\pi)$ and $by=ax$.  
 On the other hand, these same levels
  will be the indices of the north steps of $\mkpath(\newnu(\pi))$.

  Now we show that the row lengths will be the same.  Consider the
  $i$-th row from the bottom (starting with $i=1$ for the bottom row)
  in each partition.  Suppose the first-column hook-length in this row
  in $\drewnu(\pi)$ is $k$.  By definition, the length of this row in
  $\drewnu(\pi)$ will be $k-i+1$.  Additionally, since we are in the
  $i$-th row from the bottom, it follows that of the values
  $\{0,1,2,\ldots,k-1\}$, exactly $i-1$ are levels of lattice squares
  below $\mkpath(\pi)$, while the remaining $k-i+1$ values are levels of lattice
  squares above $\mkpath(\pi)$.  But the levels corresponding to squares above
  $\mkpath(\pi)$ map to $\E$'s under $\newnu$.  So 
  $|\{j:\,0\leq j\leq k\mbox{ and } z_j = \E\}| = k-i+1$, 
  which implies that the number of cells in the $i$-th
  row from the bottom of $\newnu(\pi)$ will also be $k-i+1$.
\end{proof}

We now define an analog of the map $\drewrho$ that maps $\nu =
\newnu(\pi)=\drewnu(\pi)$ to a new partition $\rho$.  With the word
$z=z_0z_1z_2\cdots$ defined as above, let $y=y_0y_1y_2\cdots$ be the
subword of $z$ formed by retaining only those $z_i$ for which $i+a$ is
the level (using the W-S convention) of a step of $\mkpath(\pi)$; then
set $\newrho(\nu)=\mkptn(y)$.  
(Technically, $\newrho$ depends not only on the partition $\nu$, but
on $a$, $b$ and $\pi = \newnu^{-1}(\nu)$ as well.  However, $a$ and $b$
are fixed and we consider $\newrho$ only as part of the composition
$\newrho\circ \newnu$.)
  
\begin{lemma}\label{lem:drew-sweep}
For all $\pi\in\ptnD_{b,-a}(a,b)$,
$\mkwd(\newrho\circ\newnu(\pi)) = \sw^{+}_{b,-a}\circ \rev(\mkwd(\pi))$.
\end{lemma}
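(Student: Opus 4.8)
The plan is to verify that each side of the identity equals the single word obtained by listing the steps of $\mkpath(\pi)$ in increasing order of their west--south level, and then to reconcile this with the subword extracted by $\newrho\circ\newnu$. To handle the right-hand side, write $w=\mkwd(\pi)$ and let $(0,0)=P_0,P_1,\dots,P_n=(b,a)$ be the points visited by $\mkpath(\pi)$, where $n=a+b$. Reversal sends the $i$-th step of $\rev(w)$ to the $(n+1-i)$-th step of $w$ (preserving its letter) and sends the endpoint reached after $i$ steps of $\rev(w)$ to $(b,a)-P_{n-i}$; since $(b,a)$ has level $0$, the E--N level of a step of $\rev(w)$ is the negative of the W--S level $by-ax$ of the corresponding step of $\mkpath(\pi)$. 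Thus $\sw^{+}_{b,-a}\circ\rev$, which sweeps $\rev(w)$ from level $0$ downward, is computed on $\mkpath(\pi)$ by scanning steps from level $0$ upward --- precisely the W--S entry for this composite in Table~\ref{tab:sweeps}, and exactly the kind of routine reformulation carried out in Step~1 of the proof of Theorem~\ref{thm:trapz-vs-sweep}. Because $\gcd(a,b)=1$, the points $P_0,\dots,P_{n-1}$ all have distinct levels, so each level hosts at most one step, the scan direction is immaterial, and the output is simply the list of step-letters in increasing order of W--S level (the Dyck condition keeps all these levels nonnegative, so no wraparound to negative levels occurs).

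The heart of the proof is a step--square correspondence matching these step-letters to the entries $z_i$ defining $\newnu(\pi)=\mkptn(z)$. To a step of $\mkpath(\pi)$ starting at $(x,y)$ with W--S level $v=by-ax$, I would associate the square $S=[x,x+1]\times[y,y+1]$, whose southeast corner $(x+1,y)$ has level $v-a$. I then classify each step: if the step is east, the Dyck condition forces $v\ge a$ and $S$ lies just above the step, hence above $\mkpath(\pi)$, so $S$ is not between the path and $by=ax$ and $z_{v-a}=\E$; if the step is north with $v>a$, then $S$ lies just right of the step, hence below $\mkpath(\pi)$, and its level $v-a>0$ places it above $by=ax$, so $S$ is between path and line and $z_{v-a}=\N$; a north step with $v=a$ cannot occur, since $v=a$ would force $(x+1,y)$ onto $by=ax$, which by coprimality happens only at $(b,a)$. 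In every case the step-letter equals $z_{v-a}$. Since $\newrho$ retains exactly those $z_i$ with $i+a$ a W--S level, the letters $\newrho$ produces are precisely the step-letters at W--S levels $v\ge a$, read in increasing order of $v$; this identifies the tail of the sweep output (the part coming from levels $\ge a$) with the unpadded word $y=\mkptn^{-1}(\newrho\circ\newnu(\pi))$.

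Finally I would account for the small levels and the $\mkwd$ normalization. Let $c$ be the number of steps with W--S level $<a$; each such step must be north, for an east step from a level-$v$ point would reach level $v-a<0$, violating the Dyck condition. These $c$ steps form the initial block of the sweep output, a run of $c$ copies of $\N$. On the other side, $y$ contains one $\E$ for each of the $b$ east steps (all at level $\ge a$) and one $\N$ for each north step with $v>a$; as no north step has $v=a$, the number of $\N$'s in $y$ is $a-c$. Hence $\mkptn(y)$ has $a-c$ parts, so when viewed inside $\ptnR(a,b)$ it has $c$ empty bottom rows, and computing $\mkwd$ in the $a\times b$ box prepends exactly $c$ copies of $\N$. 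These matched the initial block of the sweep output, and the remaining letters were matched above, which proves the identity. (As a consistency check, the terminal east step of any Dyck path has W--S level exactly $a$, matching the convention $z_0=\E$.)

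The main obstacle is the boundary case hidden in the east-step analysis: when an east step lies on the top edge $y=a$, its associated square $S$ leaves the $a\times b$ box, so I must separately confirm that no \emph{in-box} square of level $v-a$ is between the path and the line. I would settle this by solving $by'-ax'=v-a$ modulo $b$ and using $\gcd(a,b)=1$ to show the unique candidate has southeast corner at height $a$, hence lies outside the box, leaving $z_{v-a}=\E$ intact. Pinning down these degenerate cases --- top east steps, the excluded level-$a$ north step, and the padding introduced by $\mkwd$ --- is the delicate part, and all of them rest on coprimality together with the Dyck condition; the generic correspondence itself is immediate.
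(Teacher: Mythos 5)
Your proof is correct and follows essentially the same route as the paper's: identify $\sw^{+}_{b,-a}\circ\rev$ as a scan of the steps of $\mkpath(\pi)$ in increasing order of W--S level (each level occurring at most once by coprimality), then match the step of level $i+a$ with the adjacent lattice square of level $i$, so that the subword of $z$ retained by $\newrho$ reproduces the sweep output. Your additional bookkeeping --- the $c$ north steps at levels below $a$, the resulting $\N^c$ padding supplied by $\mkwd\circ\mkptn$ in the $a\times b$ box, and the top-edge and level-$a$ boundary cases --- addresses details that the paper's proof passes over silently, and is handled correctly.
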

\begin{proof}
  Recall from Table~\ref{tab:sweeps} that the sweep map variation on
  the right side of the lemma acts on $\mkwd(\pi)$ by scanning the
  levels $0,1,2,\ldots$ in this order, sweeping up path steps with
  levels assigned according to the W-S convention. (Since
  $\gcd(a,b)=1$, each level appears at most once in $\mkwd(\pi)$.
  Also, for $r=b$ and $s=-a$, $ra + sb = 0$.)

 To compare this sweep map to the action of $\newrho\circ\newnu$,
 first note that a north step on the frontier of $\pi$ has level $i+a$
 iff the lattice square directly east of that north step has level $i$.
 Such lattice squares are encoded as $z_i=\N$ by $\newnu$ and
 are then retained by $\newrho$.
 Similarly, an east step on the frontier of $\pi$ has level $i+a$
 iff the lattice square directly north of that east step has level $i$.
 Such lattice squares are encoded as $z_i=\E$ by $\newnu$ and
 are then retained by $\newrho$. All other lattice squares not of
 these two types are discarded by $\newrho$. Thus 
 $\mkwd(\newrho\circ\newnu(\pi))$,
 which is precisely the subword of $z_0z_1z_2\cdots$ 
 consisting of letters retained by $\newrho$, will be the same 
 word produced by the sweep map.  
\end{proof}

 Intuitively, one can think of $\newnu$ as sweeping up \emph{all} 
 lattice-square levels, and then $\newrho$ keeps only those levels
 of squares that are ``adjacent'' to the frontier of $\pi$ in the
 sense described above.  Below, we will call these squares \emph{frontier
 squares} of $\pi$.

 Before proving our final lemma, we need to introduce some temporary 
 notation for describing the cells and rows of $\nu=\drewnu(\pi)=\newnu(\pi)$.
 First, let $\FCHL_{\nu}$ be the set of hook-lengths of cells
 in the first (leftmost) column of $\nu$. In our running example,
 $\FCHL_{\nu}=\{1,2,4,5,8,9,11,15,18,25\}$. Each square $c$ in the
 diagram of $\nu$ lies due north of an east step on the frontier of
 $\nu$, say $z_i=\E$; and $c$ lies due west of a north step on the
 frontier of $\nu$, say $z_m=\N$. Identify the square $c$ with
 the ordered pair of labels $[i,m]$. Observe that $[i,m]$ is the
 label of some cell $c$ in the diagram of $\nu$ iff $0\leq i<m$
 and $z_i=\E$ and $z_m=\N$; in this case, we must have $m\in\FCHL_{\nu}$.
 It is routine to check that the hook-length $h(c)$ is $m-i$.
 For all $m\in\FCHL_{\nu}$, the \emph{row of $\nu$ indexed by $m$}
 is the row with leftmost cell $[0,m]$, whose hook-length is $m$.

\begin{lemma}\label{lem:rho}
  For all $\pi \in \ptnD_{b,-a}(a,b)$, 
 $\newrho(\newnu(\pi)) = \drewrho(\drewnu(\pi))$.  
\end{lemma}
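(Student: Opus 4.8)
The plan is to invoke Lemma~\ref{lem:nu} to replace both $\drewnu(\pi)$ and $\newnu(\pi)$ by the single partition $\nu$, so that the assertion collapses to $\newrho(\nu)=\drewrho(\nu)$. Each side is built from $\nu$ by selecting certain rows and assigning a length to each, so I would prove the maps agree by checking (i) they select the same rows of $\nu$, and (ii) they assign equal lengths to each selected row. For (i), using the temporary notation, a row indexed by $m\in\FCHL_{\nu}$ contributes a row to $\drewrho(\nu)$ exactly when $m$ is the level of a square lying directly east of a north step of $\mkpath(\pi)$; by the frontier-square analysis already carried out in the proof of Lemma~\ref{lem:drew-sweep}, this is precisely the condition that the north step $z_m=\N$ of $\nu$ is retained by $\newrho$ (equivalently, $m+a$ is the W-S level of a north step of $\mkpath(\pi)$). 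Hence both maps produce one row per retained north step, indexed by the same set of $m$'s.

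The geometric engine for the length computation is a decomposition into columns. Since $\gcd(a,b)=1$, the assignment $x\mapsto -a(x+1)\bmod b$ is a bijection from the columns $x=0,1,\dots,b-1$ onto the residues modulo $b$, and within a fixed column the square levels form an arithmetic progression of common difference $b$. I would establish the key local fact that in each column the levels cross over monotonically: reading upward, the squares are first ``between'' (complement squares above $by=ax$, encoded $z_i=\N$) and then lie in the partition region (encoded $z_i=\E$), the crossover occurring at the lowest partition square of that column. That lowest partition square is exactly the unique square in the column lying directly north of an east step of $\mkpath(\pi)$ --- the retained $\E$ of that column --- whose level I denote $\ell_x$. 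The crucial point is that $\ell_x\ge 0$ for every column, since this level equals the level of the east-step endpoint, which is nonnegative because $\mkpath(\pi)$ is an $(r,s)$-Dyck path. Thus each column contributes exactly one retained $\E$, at a level $\ell_x\ge 0$, and within column $x$ one has $z_i=\E$ iff $i\ge\ell_x$.

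With this in hand I would finish by computing both row lengths as the same column count. On the $\newrho$ side, the length of the row at $z_m=\N$ is the number of retained $\E$'s occurring before position $m$, namely $\#\{x:\ell_x<m\}$. On the $\drewrho$ side, the length is the number of cells of $\nu$ in row $m$ with hook-length at most $b$; since $h([i,m])=m-i$, this equals $\#\{i:z_i=\E,\ \max(0,m-b)\le i<m\}$, which I would rewrite as $\#\{i:z_i=\E,\ 0\le i<m\}-\#\{i:z_i=\E,\ 0\le i<m-b\}$. Decomposing each count over columns and using that column $x$ contributes partition squares at levels $\ell_x,\ell_x+b,\ell_x+2b,\dots$, the difference telescopes column-by-column to $1$ when $\ell_x<m$ and to $0$ otherwise, giving $\#\{x:\ell_x<m\}$ again. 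Because $\ell_x\ge 0$, the two counts coincide for every selected $m$; and since the common value $\#\{x:\ell_x<m\}$ is monotone in $m$, the selected rows pair up correctly, yielding $\newrho(\nu)=\drewrho(\nu)$.

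I expect the main obstacle to be this length identity --- specifically, justifying that the ``hook-length $\le b$'' truncation in $\drewrho$ matches the ``retained'' condition in $\newrho$. The per-column telescoping is what converts the window-of-width-$b$ hook condition into a single indicator per column, and it is exactly here that the Dyck hypothesis ($\ell_x\ge 0$) is indispensable: without it, columns whose distinguished square fell below the line would be counted by $\drewrho$ but not by $\newrho$, and the identity would break. I would therefore present the crossover/monotonicity fact and the nonnegativity of $\ell_x$ with care, treating the remaining steps as bookkeeping.
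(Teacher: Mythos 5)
Your proposal is correct. It shares the paper's two-step skeleton --- first both maps retain the same rows of $\nu$ (your step (i) is the paper's Step 1, resting on the same frontier-square dictionary from the proof of Lemma~\ref{lem:drew-sweep}), then the retained rows receive equal lengths --- but you organize the length computation differently. The paper compares the \emph{discarded} data on the two sides: it forms the set $C_m$ of cells of row $m$ with hook-length exceeding $b$ and the set $B_m$ of non-frontier positions $j<m$ with $z_j=\E$, and exhibits the explicit mutually inverse bijections $j\mapsto[j-b,m]$ and $[i,m]\mapsto i+b$, justified by the observation that the square one unit below (level $j-b$) or one unit above (level $i+b$) an above-the-path square is again above the path. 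You instead count the \emph{retained} cells directly, partitioning all levels into the $b$ residue classes modulo $b$ (one per column, using $\gcd(a,b)=1$), noting that the $\E$-levels in the class of column $x$ form the upward-closed progression $\ell_x,\ell_x+b,\ell_x+2b,\ldots$ with $\ell_x\ge 0$ by the Dyck hypothesis, and telescoping the width-$b$ hook window to the indicator of $\ell_x<m$, so that both row lengths equal $\#\{x:\ell_x<m\}$. The two arguments rest on the identical geometric fact --- a level shift of $\pm b$ moves one square vertically within its column, and the above-path squares of a column form an upward-closed set --- so this is essentially the same proof in different bookkeeping. Your packaging makes the roles of coprimality (each residue class is exactly one column) and of the Dyck condition (nonnegativity of $\ell_x$, which you rightly flag as the indispensable hypothesis) somewhat more transparent, at the cost of a summation; the paper's explicit bijection is more local and reduces the whole matter to the single equality $|B_m|=|C_m|$.
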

\begin{proof}
  Let $\nu=\newnu(\pi)=\drewnu(\pi)$. 
  We must show $\drewrho(\nu)=\newrho(\nu)$.

  \medskip\noindent\textbf{Step 1.}  We show that $\drewrho$ and
  $\newrho$ keep the same rows of $\nu$.  On one hand, the definition
  of $\drewrho$ tells us to keep the rows of $\nu$ indexed by those
  $m\in\FCHL_{\nu}$ appearing as the level of a square immediately east of
  a north step in $\mkpath(\pi)$. These squares are the frontier
  squares of $\pi$ below $\mkpath(\pi)$. On the other hand, the
  definition of $\newrho$ tells us to retain the frontier steps
  $z_m=\N$ of $\nu$ for those $m\in\FCHL_{\nu}$ such that $m+a$ is the
  level of a north step of $\mkpath(\pi)$. As observed in the earlier
  lemma, these $m$'s correspond to $m$'s that are the levels of
  frontier squares of $\pi$ below $\mkpath(\pi)$.  So $\drewrho$ and
  $\newrho$ do retain the same rows of $\nu$.

\medskip\noindent\textbf{Step 2.} 
 For each fixed $m\in\FCHL_{\nu}$, we compare the cells in the row
 of $\nu$ indexed by $m$ that are discarded by $\drewrho$ and $\newrho$.  
On one hand, let \[ C_m = \{c:\,\text{$c$ is a cell
 in the row of $\nu$ indexed by $m$, and $h(c)>b$}\}. \]
The cells in $C_m$ are erased by $\drewrho$. So, for those row indices $m$ 
retained by $\drewrho$, $|C_m|$ is the difference between the length of this
row in $\nu$ and the length of the corresponding row in $\drewrho(\nu)$.
On the other hand, let \[ B_m = \{j:\,1\leq j < m, z_j = \E,\text{ and 
 the square with level $j$ is not a frontier square of $\pi$}\}. \]
The values $j\in B_m$ index the east steps $z_j=\E$ prior to the
north step $z_j=m$ that are discarded by $\newrho$. So, for those row indices
$m$ retained by $\newrho$, $|B_m|$ is the difference between the length
of this row in $\nu$ and the length of the corresponding row in
$\newrho(\nu)$. Since $\drewrho$ and $\newrho$ retain the same row
indices $m$ (by Step 1),
it will now suffice to show that $|B_m|=|C_m|$ for all $m\in\FCHL_{\nu}$.
  
\medskip\noindent\textbf{Step 3.} 
Fix $m\in\FCHL_{\nu}$; we define bijections $G:B_m\rightarrow C_m$
and $H:C_m\rightarrow B_m$. Given $j\in B_m$, let $G(j)=[j-b,m]$.
Given a cell $[i,m]\in C_m$, let $H([i,m])=i+b$. It is clear that
$H\circ G$ and $G\circ H$ are identity maps, so the proof will be
complete once we check that $G$ does map $B_m$ into $C_m$, and
$H$ does map $C_m$ into $B_m$. Consider a fixed $j\in B_m$. Since
$z_j=\E$, $j$ is the level of a square above $\mkpath(\pi)$, but this square is
not a frontier square of $\pi$. Hence, the square directly below this
square (whose level is $j-b$) is also above $\mkpath(\pi)$. This
implies $j-b\geq 0$ and $z_{j-b}=\E$. Moreover, since $j<m$, the hook-length
of the cell $[j-b,m]$ is $m-(j-b)=b+(m-j)>b$, proving that
$G(j)=[j-b,m]\in C_m$. Now consider a fixed cell $[i,m]\in C_m$.
By definition of $C_m$, we must have $z_i=\E$ and $m-i>b$.
So the square with level $i$ is above $\mkpath(\pi)$, and hence the
square with level $i+b$ is also above $\mkpath(\pi)$ and is not a frontier
square of $\pi$. In particular, $z_{i+b}=\E$.
Finally, $i+b<m$ and $i+b>0$, so $i+b\in B_m$. 
We conclude that $H([i,m])\in B_m$.  
\end{proof}
 
In our running example, the row indices $m$ retained by both $\drewrho$ 
and $\newrho$ are $5,9,15,25$. For $m=25$, we have
\[ B_{25}=\{10,13,16,17,20,22,23,24\}; \]
\[ C_{25}=\{[0,25],[3,25],[6,25],[7,25],[10,25],[12,25],[13,25],[14,25]\}. \]
($C_{25}$ is the set of the leftmost eight cells in the top row of $\nu$.)
The map $j\mapsto [j-10,25]$ defines a bijection from $B_{25}$ to $C_{25}$.

\subsection{An Algorithm Based on Semi-Module Generators}
\label{subsec:gorsky-mazin}

In~\cite{GM-jacI,GM-jacII}, E.~Gorsky and M.~Mazin relate the
$q,t$-Catalan numbers and their generalizations to the homology of
compactified Jacobians for singular plane curves with Puiseux pair
$(a,b)$.  In the course of their investigations, they introduce the
following map $\gmmap_{b,a}$ on partitions in $\ptnD_{b,-a}(a,b)$ (we
follow the notation of ~\cite{GM-jacII}).  Let $a,b\in\Z_{>0}$ with
$\gcd(a,b)=1$ and $\pi\in\ptnD_{b,-a}(a,b)$.  For $1\leq i\leq b$,
define the \textbf{$\boldsymbol{b}$-generators of $\boldsymbol{\pi}$},
denoted $\beta_1 < \cdots < \beta_b$, to be the levels of the squares
immediately above $\mkpath(\pi)$.  Define $\Delta = \Delta(\pi)$ to be
the set of levels of \emph{all} lattice squares lying north or west of
$\mkpath(\pi)$ (i.e., including squares not adjacent to
$\mkpath(\pi)$).  Equivalently, $\Delta=\Z_{\geq 0}\setminus\Delta^c$
where $\Delta^c$ is the finite set of levels of lattice squares
between $\mkpath(\pi)$ and $by=ax$.  We then define a new partition
$\rho = \gmmap_{b,a}(\pi)$ by setting the $i$-th column of $\rho$ to have
length
\begin{equation*}
  g_{b,a}(\beta_i) = |\{\beta_i,\beta_i+1,\ldots,\beta_i+a-1\} \setminus \Delta|
               = |\{\beta_i,\beta_i+1,\ldots,\beta_i+a-1\}\cap\Delta^c|.
\end{equation*}

For our running example where $\pi = (4,4,4,2,2,1)$, $a=7$, and
$b=10$,  the $10$-generators are $\{0,3,6,7,12,14,19,21,28,35\}$, 
\begin{equation*} 
\Delta^c = \{1,2,4,5,8,9,11,15,18,25\}, 
\end{equation*}
and $\Delta=\mathbb{Z}_{\geq 0}\setminus\Delta^c$.  It follows that 
\begin{align*}
  g_{10,7}(0) &= |\{0,\ldots,6\} \setminus \{0,3,6\}\}| = 4,\\
  g_{10,7}(3) &= |\{3,\ldots,9\} \setminus \{3,6,7\}\}| = 4,\\
  g_{10,7}(6) &= |\{6,\ldots,12\} \setminus \{6,7,10,12\}\}| = 3,\\
  g_{10,7}(7) &= |\{7,\ldots,13\} \setminus \{7,10,12,13\}\}| = 3,\\
  g_{10,7}(12) &= |\{12,\ldots,18\} \setminus \{12,13,14,16,17\}\}| = 2,\\
  g_{10,7}(14) &= |\{14,\ldots,20\} \setminus \{14,16,17,19,20\}\}| = 2,\\
  g_{10,7}(19) &= |\{19,\ldots,25\} \setminus \{19,20,21,22,23,24\}\}| = 1,\\
  g_{10,7}(21) &= |\{21,\ldots,27\} \setminus \{21,22,23,24,26,27\}\}| = 1,\\
  g_{10,7}(28) &= |\{28,\ldots,34\} \setminus \{28,\ldots,34\}\}| = 0,\\
  g_{10,7}(35) &= |\{35,\ldots,41\} \setminus \{35,\ldots,41\}\}| = 0.
\end{align*} 
The vector $(g_{10,7}(0),g_{10,7}(3),\ldots,g_{10,7}(35)) = (4,4,3,3,2,2,1,1)$ 
gives the column lengths of the partition $\rho=\gmmap_{7,3}(\pi)=(8,6,4,2)$.
See Figure~\ref{fig:GMmap}.

\begin{figure}[htbp]
  \centering 
      {\scalebox{.4}{\includegraphics{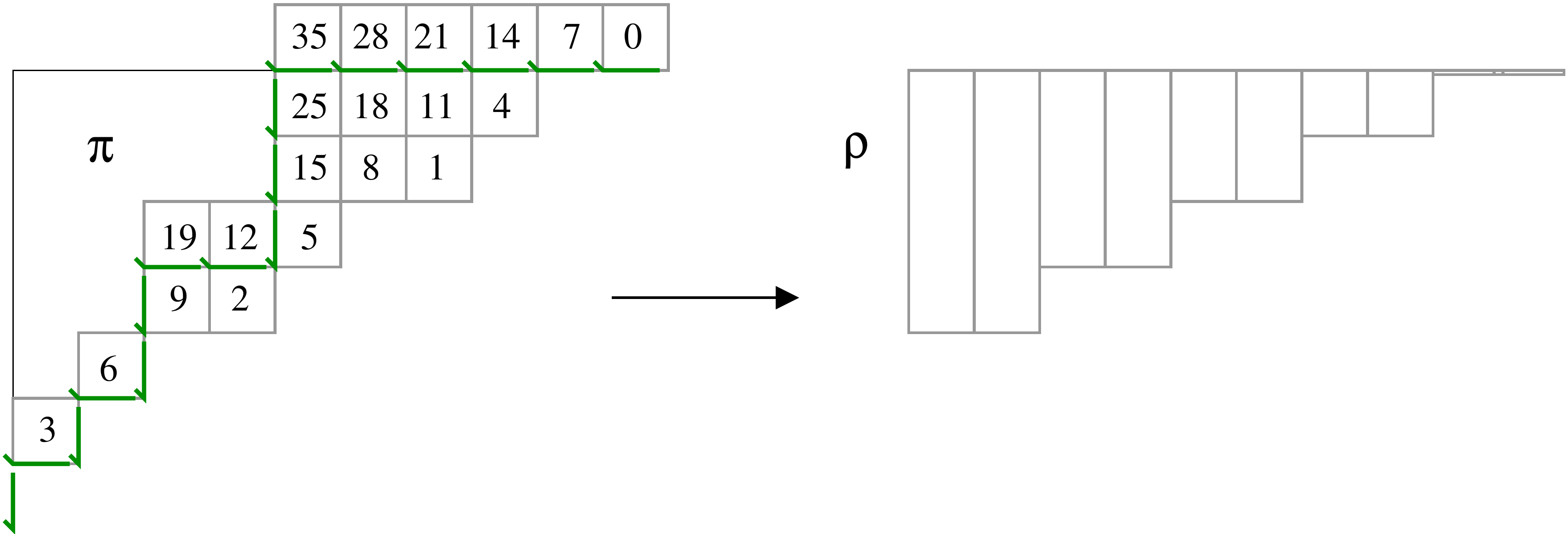}}}
  \caption{Example of the Gorsky-Mazin map $\gmmap_{7,3}$.}
  \label{fig:GMmap}
\end{figure}

The preceding example suggests that the Gorsky-Mazin map coincides
with the map discussed in~\S\ref{subsec:zetamap}. We now prove this fact.

\begin{theorem}\label{thm:gm}
  For $a,b\in\Z_{>0}$ with $\gcd(a,b)=1$ and all $\pi\in \ptnD_{b,-a}(a,b)$,
\[ \mkwd(\gmmap(\pi)) = \sw^{+}_{b,-a}\circ\rev(\mkwd(\pi)).  \]
\end{theorem}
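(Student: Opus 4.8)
The plan is to deduce the statement from Theorem~\ref{thm:drew-sweep}: since that result gives $\sw^{+}_{b,-a}\circ\rev(\mkwd(\pi))=\mkwd(\zetamap(\pi))$, it suffices to prove the partition identity $\gmmap_{b,a}(\pi)=\zetamap(\pi)$ and then apply $\mkwd$. I would work throughout with the level sets of the Gorsky--Mazin construction: $\Delta^c$, the finite set of levels of squares between $\mkpath(\pi)$ and $by=ax$, and $\Delta=\Z_{\geq 0}\setminus\Delta^c$. Because adding $a$ (resp.\ $b$) to the level of a square shifts it one unit west (resp.\ north), $\Delta$ is closed under $+a$ and $+b$, and $\Delta^c$ is correspondingly closed under $-a$ and $-b$ within $\Z_{\geq 0}$. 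Two quick observations set up the comparison: the $b$-generators are exactly the elements of $\Delta$ minimal in their residue class modulo $b$, i.e.\ $\{\beta\in\Delta:\beta-b\notin\Delta\}$; and, unwinding Lemmas~\ref{lem:rho} and~\ref{lem:drew-sweep}, the rows kept by the $\zetamap$ map are indexed by the \emph{retained levels} $\{m\in\Delta^c:m+a\in\Delta\}$ (the levels of below-path squares sitting directly east of a north step of $\mkpath(\pi)$).

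With this language, Lemma~\ref{lem:drew-sweep} says $\zetamap(\pi)=\mkptn(y)$, where $y$ is the word obtained by scanning $i=0,1,2,\dots$ and emitting an $\N$ at each retained level and an $\E$ at each $b$-generator; that is, $y$ is the merge, by increasing level, of one $\N$ per retained level and one $\E$ per generator. Reading the single Young diagram $\mkptn(y)$ by columns, its $c$-th column from the left corresponds to the $c$-th smallest generator $\beta$ and has length equal to the number of retained levels exceeding $\beta$ (the number of $\N$'s to the right of that $\E$). The Gorsky--Mazin recipe instead declares this column to have length $g_{b,a}(\beta)=|[\beta,\beta+a)\cap\Delta^c|$. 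Hence the theorem reduces to the single identity, for every generator $\beta$,
\[
  |[\beta,\beta+a)\cap\Delta^c|=|\{m\in\Delta^c:\ m>\beta,\ m+a\in\Delta\}|.
\]
Note that the right-hand side is visibly weakly decreasing in $\beta$, so this identity also confirms that the $g_{b,a}(\beta_i)$ form a genuine partition indexed exactly as the columns of $\mkptn(y)$.

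I would prove the identity by an explicit bijection $\psi$ from $[\beta,\beta+a)\cap\Delta^c$ to the retained levels exceeding $\beta$: send $j$ to $j+ta$, where $t\geq 0$ is least with $j+(t+1)a\in\Delta$ (geometrically, step west by $a$'s until the next square lands above the path). Then $\psi(j)\in\Delta^c$ and $\psi(j)+a\in\Delta$, so $\psi(j)$ is retained, and $\psi(j)\geq j>\beta$ because $\beta\in\Delta$ forces $j\neq\beta$. Injectivity is immediate, since the elements of a length-$a$ window are distinct modulo $a$ and $\psi$ preserves residues modulo $a$; surjectivity uses the downward closure of $\Delta^c$ under $-a$ to pull any retained $m>\beta$ back to the unique window element congruent to $m$ modulo $a$. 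I expect this identity to be the main obstacle: it reconciles Gorsky--Mazin's count of \emph{all} below-path squares in a length-$a$ window with the $\zetamap$ map's count of \emph{frontier} (retained) squares lying above $\beta$, and the bijection $\psi$ is where the $+a$-closure of $\Delta$ does the essential work (well-definedness of $t$, the point $j\neq\beta$, and surjectivity each need a line of justification). The dual reading confirms the picture: the row of $\mkptn(y)$ attached to a retained level $m$ has length equal to the number of generators below $m$, which is the $\zetamap$ row length for $m$; thus $\gmmap_{b,a}(\pi)$ and $\zetamap(\pi)$ are merely the column- and row-descriptions of the one diagram $\mkptn(y)$. Applying $\mkwd$ and invoking Theorem~\ref{thm:drew-sweep} then yields $\mkwd(\gmmap(\pi))=\sw^{+}_{b,-a}\circ\rev(\mkwd(\pi))$.
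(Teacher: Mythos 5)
Your argument is correct, and it reaches the theorem by a different route than the paper, though the combinatorial heart turns out to be the same. The paper proves Theorem~\ref{thm:gm} directly, with no appeal to Theorem~\ref{thm:drew-sweep}: since $\sw^{+}_{b,-a}\circ\rev$ places the east steps at levels $a+\beta_1<\cdots<a+\beta_b$, it suffices to show that the number of north steps of level less than $a+\beta_i$ equals $a-g_{b,a}(\beta_i)$, i.e.\ that $\bigl|[\beta_i,\beta_i+a)\cap\Delta\bigr|$ counts the north steps of level $<a+\beta_i$; this is proved by noting that each such window element is $k+ja$ for a unique north step of level $a+k<a+\beta_i$ and a unique $j\geq 1$. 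That is exactly the complement of your identity
\[
\bigl|[\beta,\beta+a)\cap\Delta^c\bigr|=\bigl|\{m\in\Delta^c:\ m>\beta,\ m+a\in\Delta\}\bigr|
\]
(both sides subtracted from $a$), and the paper's matching is your bijection $\psi$ run backwards: subtract $a$'s until you exit $\Delta$ rather than add $a$'s until the next term lands in $\Delta$. So the residue-class-mod-$a$ window argument, powered by the closure of $\Delta$ under $+a$, is common to both proofs. What your route buys is an explicit proof of the intermediate statement $\gmmap_{b,a}(\pi)=\zetamap(\pi)$, which the paper only motivates by example and then obtains a posteriori by combining the two theorems; the cost is that your proof inherits the full machinery of Theorem~\ref{thm:drew-sweep} and Lemmas~\ref{lem:nu}--\ref{lem:rho}, which the paper's self-contained comparison avoids. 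Your supporting claims all check out: the characterizations of generators as $\{\beta\in\Delta:\beta-b\notin\Delta\}$ and of retained levels as $\{m\in\Delta^c:m+a\in\Delta\}$ agree with the paper's frontier-square descriptions, the well-definedness of $t$ follows from finiteness of $\Delta^c$, the point $j\neq\beta$ from $\Delta\cap\Delta^c=\emptyset$, and surjectivity from closure of $\Delta^c$ under subtracting $a$ within $\Z_{\geq 0}$. The only cosmetic caution is that $\mkptn(y)$ has fewer than $a$ nonzero rows in general (north steps whose eastern neighbor square has negative level contribute nothing to $y$), but since you compare partitions rather than words before invoking Theorem~\ref{thm:drew-sweep}, this causes no gap.
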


\begin{proof}
  Let $\pi\in\ptnD_{b,-a}(a,b)$ have $b$-generators $\beta_1 < \beta_2
  < \cdots < \beta_b$.  Recall from Table~\ref{tab:sweeps} that
  $\sw^+_{b,-a}\circ\rev$ uses the west-south convention to assign
  levels to steps of a lattice path. It follows that the levels of
  east steps in $\mkpath(\pi)$ are precisely the numbers
  $a+\beta_1<a+\beta_2<\cdots<a+\beta_b$.  The $i$-th east step in the
  output of $\sw^+_{b,-a}\circ\rev$ will be preceded by all the north
  steps whose levels are less than $a+\beta_i$ and followed by all the
  north steps whose levels are greater than $a+\beta_i$. Since the
  output has exactly $a$ north steps total, it will suffice to prove
  (for each fixed $i$) that
\[ \mbox{(the number of north steps of level $<a+\beta_i$)}
  +g_{b,a}(\beta_i)=a. \] 
 For each north step of level $a+k$, the lattice square with level $k$ lies
  below $\mkpath(\pi)$, but all of the squares to the left lie west of
  $\mkpath(\pi)$ and have levels of the form $k+ja$ for some $j\in
  \mathbb{Z}_{\geq 1}$.  On the other hand, for each
  $b$-generator $\beta_i$, $g_{b,a}(\beta_i)$ is the number of levels in the set
  $\{\beta_i,\beta_i+1,\ldots,\beta_i+a-1\}$ that are the levels of
  squares below $\mkpath(\pi)$.  For each north step of level $a+k$ 
  with $k \geq \beta_i$, \[|\{\beta_i,\ldots,\beta_i+a-1\}
  \cap \{k+ja\}_{j > 0}| = 0.\] But for each north step of level $a+k$
  with $k < \beta_i$, then the cardinality will be exactly $1$.  
  Thus the number of levels removed from the $a$-element set
  $\{\beta_i,\ldots,\beta_i+a-1\}$ to obtain $g_{b,a}(\beta_i)$
  is the same as the number of north steps of level $<a+\beta_i$, as needed.  
\end{proof}

\section{Inverting the Sweep Map}
\label{sec:invert-sweep}

\subsection{Introduction.}
\label{subsec:invert-intro}

The main open problem in this paper is to prove that \emph{all sweep maps
are bijections}. Even in the two-letter case, this problem appears to
be very difficult in general. Nevertheless, many special cases of the
sweep map are known to be invertible. After discussing the basic strategy
for inversion (which involves recreating the labels on the output steps
by drawing a suitable ``bounce path''), we describe the inverse sweep
maps that have appeared in the literature in various guises. We omit
detailed proofs that the inverse maps work, since these appear in the
references.  

\subsection{Strategy for Inversion.}
\label{subsec:strategy-invert}

In Figure~\ref{fig:negsweep-ex}, we showed the computation of
$Q=\sw_{3,-2}^-(P)$ where $P,Q$ are paths in $\patR(\N^8\E^{10})$.
The output $Q$ is the path shown on the far right of the figure,
\emph{not including labels}. Suppose we were given $Q$ and needed to
compute $P=(\sw_{3,-2}^-)^{-1}(Q)$. If we could somehow recreate the
labels on the steps of $Q$ (as shown in the figure), then the sweep
map could be easily inverted, as follows. By counting the total number
of north and east steps, we deduce that $P$ must end at level $8\cdot
3 + 10\cdot (-2) = 4$.  We now reconstruct the steps of $P$
in reverse order. The last step of $P$ must be the first step of $Q$ in the
collection of steps labeled 4 (since, when sweeping $P$ to produce
$Q$, level 4 is swept from right to left). We mark that step of $Q$ as
being used. Since it is a north step, the preceding step of $P$ must
end at level 1.  We now take the first unused step of $Q$ labeled 1
(which is a north step), mark it as used, and note that the preceding
step of $P$ must end at level $-2$. We continue similarly, producing
$P$ in reverse until reaching the origin, and marking steps in $Q$ as
they are used.  Because $Q$ is in the image of the sweep map, this
process must succeed (in the sense that all steps of $Q$ are used at
the end, and we never get stuck at some level where all steps in $Q$
with that label have already been used). Evidently, the strategy
outlined here works for any choice of weights, including the general
case of alphabets with more than one letter. Variations of the sweep
map (such as $\sw^+$) can be handled analogously.  The crucial
question is \emph{how to recreate the labels on the steps of $Q$}.

This question has been answered in the literature for Dyck paths,
$m$-Dyck paths, trapezoidal lattice paths, square paths, Schr\"oder
paths, $(n,nm+1)$-Dyck paths, and $(n,nm-1)$-Dyck paths. In every
known case, the key to recreating the labels is to define a
\emph{bounce path} for a lattice path $Q$. The steps of $Q$ associated
with the ``$i$-th bounce'' in the bounce path receive label $i$. Once
labels have been assigned, one can reverse the sweep map as described
in the previous paragraph. We begin by discussing the simplest
instance of the bounce path, which is used to invert the map
$\sw_{1,-1}^-=\phi_{\HL}$ (see~\S\ref{subsubsec:trapz}) acting on Dyck 
paths.  Ironically, several different authors independently introduced
inverse sweep maps even before the map $\phi_{\HL}$ was
proposed in the context of $q,t$-Catalan numbers. We describe 
these inverses in \S\ref{subsec:vaille} and \S\ref{subsec:andrews} below.

\subsection{Inversion of $\phi_{\HL}$ via Haglund's Bounce Path}
\label{subsec:invert-HL}

Figure~\ref{fig:invert-HL} shows the computation of
$Q=\sw_{1,-1}^-(P)=\phi_{\HL}(P)$ for a Dyck path
$P\in\patD(\N^{14}\E^{14})$. To understand how to find
$P=\phi_{\HL}^{-1}(Q)$ given $Q$, it suffices (by the above remarks)
to see how to pass from the unlabeled path $Q$ on the right side of
the figure to the labeled path in the middle of the figure.

\begin{figure}[htbp]
  \centering 
      {\scalebox{.3}{\includegraphics{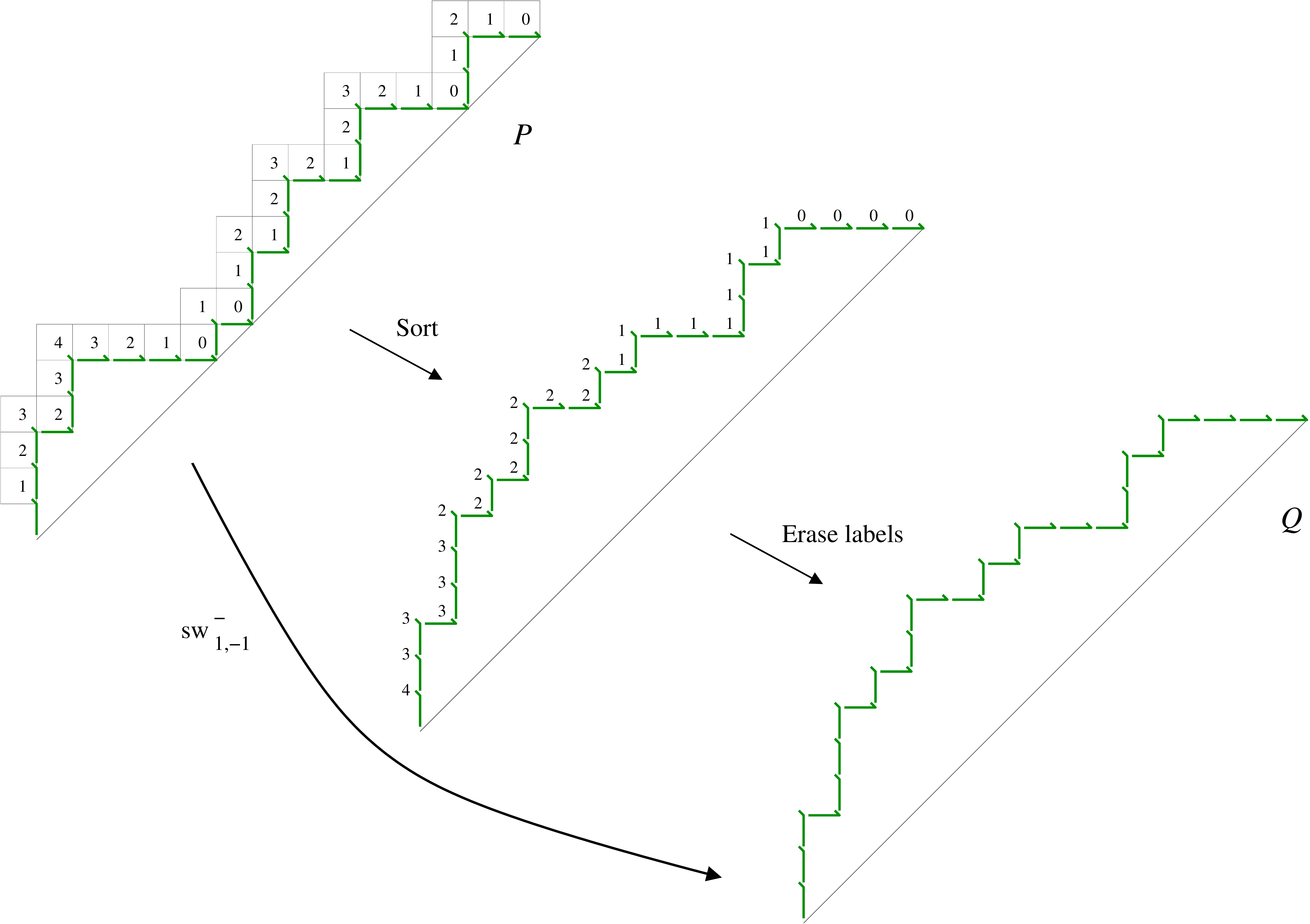}}}
      \caption{Illustration of the action of $\sw^{-}_{1,-1}$ on a
        path $P\in \patD(\N^{14}\E^{14})$.}
  \label{fig:invert-HL}
\end{figure}

To recreate the labels, we draw the \emph{bounce path} for the Dyck path $Q$,
using the following definition due to Haglund~\cite{Hag-bounce}. The bounce
path starts at $(n,n)$ and makes a sequence of horizontal moves $H_i$
and vertical moves $V_i$, for $i=0,1,2,\ldots$, until reaching $(0,0)$.
Each horizontal move is determined by moving west from the current position
as far as possible without going strictly left of the path $Q$.
Then the next vertical move goes south back to the diagonal $y=x$.
Figure~\ref{fig:sweep-bounce} shows the bounce path for our example path $Q$.
In this example, the labels we are trying to recreate are related to
the bounce path as follows: every step of $Q$ located above the bounce
move $H_i$ and to the left of bounce move $V_{i-1}$ has label $i$.
As special cases, the steps of $Q$ above $H_0$ have label zero, and the 
steps of $Q$ to the left of the last bounce move $V_s$ have label $s+1$.
We claim that this relation between the labels and the bounce path holds
in general, for any Dyck path $Q$ of the form $\phi_{\HL}(P)$. This
claim implies that $P$ can be uniquely recovered from $Q$, so that
$\phi_{\HL}$ is injective and hence bijective.

\begin{figure}[htbp]
  \centering 
      {\scalebox{.3}{\includegraphics{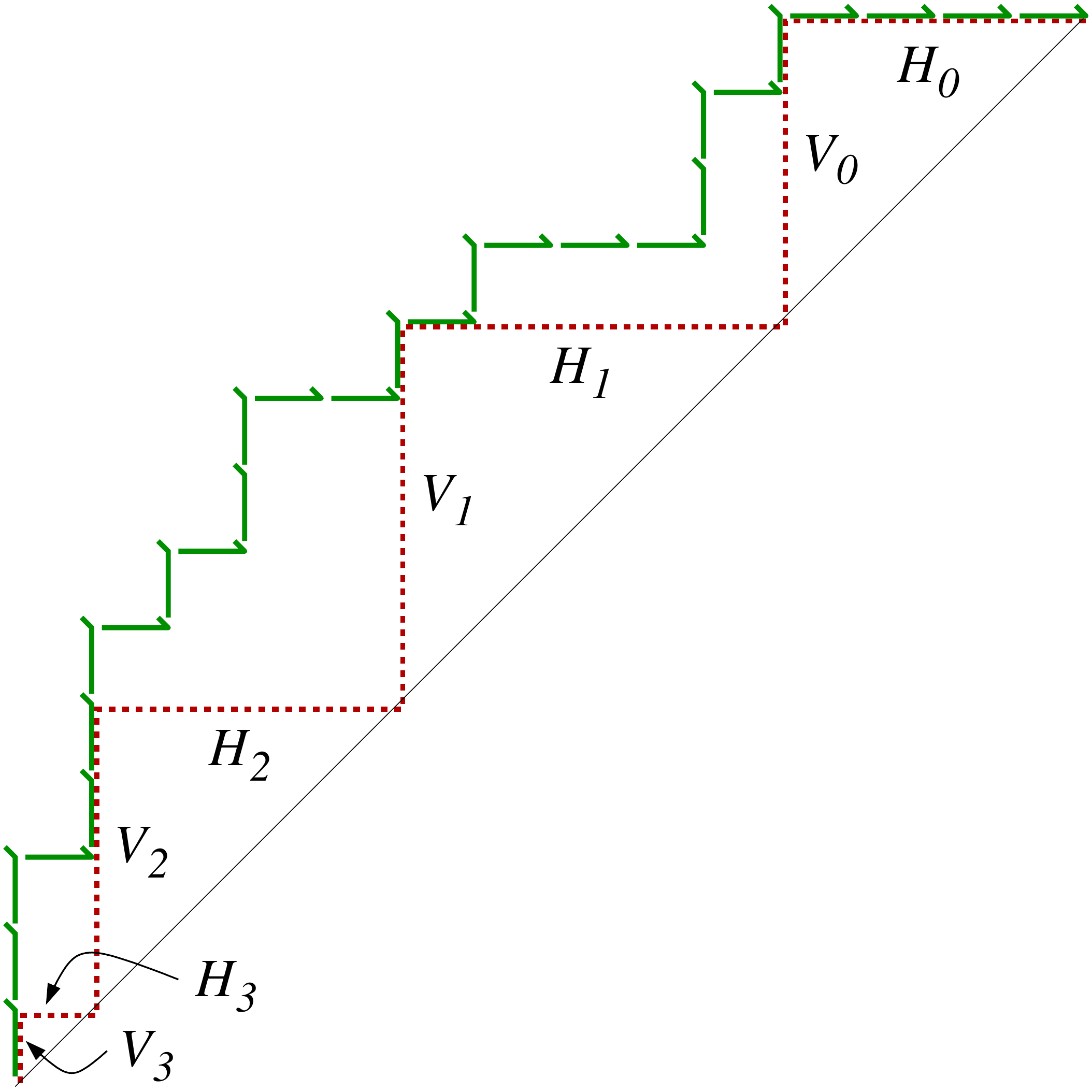}}}
      \caption{Bounce path for path $Q$ of Figure~\ref{fig:invert-HL}.}
  \label{fig:sweep-bounce}
\end{figure}

To prove the claim, let $h_i$ be the length of the horizontal move
$H_i$ of the bounce path for $Q$, and let $v_i=h_i$ be the length of
the vertical move $V_i$ of the bounce path for $Q$. Also define
$v_i=h_i=0$ for $i<0$ and $i>s$, where $s+1$ is the total number of
horizontal bounces.  Finally, let $n_i$ (resp. $e_i$) denote the number of
north (resp. east) steps of $P$ with $(1,-1)$-level equal to $i$.

We first prove this lemma: $n_i = e_{i-1}$ for all $i\in\ZZ$. 
For any level $i$, the number of times the path $P$ arrives at 
this level (via a north step of level $i$ or an east step of level $i$)
equals the number of times $P$ leaves this level
(via a north step of level $i+1$ or an east step of level $i-1$).
This holds even when $i=0$, since $P$ begins and ends at level zero.
It follows that $n_i + e_i = n_{i+1} + e_{i-1}$ for all $i\in\ZZ$.
Also $n_i=0$ for all $i\leq 0$, and $e_i=0$ for all $i<0$.
Thus $n_i=e_{i-1}=0$ for all integers $i\leq 0$. Fix an integer $i\geq 0$,
and assume that $n_i=e_{i-1}$. Then $n_{i+1}=n_i+e_i-e_{i-1}=e_i$,
so the lemma follows by induction.  

Now we show that for all $i\in\Z$, $h_i = e_i$ and $v_{i-1} = n_i$.
Since $P$ is a Dyck path, the assertion holds for all $i<0$.  We prove
the two equalities for $i \geq 0$ by induction on $i$, starting with
the base case of $i=0$.  Recall that the steps of $P$ are swept in
decreasing order by level.  We know that $v_{-1} = n_0 = 0$.  
Since $n_0 = 0$, the path $Q$ ends in $e_0$ east steps.  Hence $h_0
\geq e_0$.  Since $P$ starts at level $0$, the first step in $P$ at
\emph{any} level $i > 0$ must be a north step.  It follows that the
last step in $Q$ labeled with a 1 is a north step and, consequently,
that $h_0 = e_0$.

Assume now that $h_k = e_k$ and $v_{k-1} = n_k$ for some fixed $k\geq
0$.  It follows from the bounce mechanism that $v_k = h_k$.  We
know that $h_k = e_k$ by the induction hypothesis.  Finally, $e_k =
n_{k+1}$ by the above discussion.  Combining these equalities, we find
that $v_k = n_{k+1}$.  We know that $h_{k+1} \geq e_{k+1}$ using
$v_k = n_{k+1}$ and the fact that the east steps of $P$ at level
$k+1$ must be swept after any steps at level $k+2$.  As observed
above, the first step in $P$ at level $k+2$ (if it exists) is a north
step.  Hence $h_{k+1} = e_{k+1}$.

\subsection{Vaill{\'e}'s Bijection}
\label{subsec:vaille}

In 1997, Vaill{\'e}~\cite{vaille} defined a bijection
$\omega$ mapping Dyck paths to Dyck paths, which is the inverse 
of the map $\phi_{n,0,1}$ defined in~\S\ref{subsubsec:trapz}.
(Recall that $\phi_{n,0,1}$ differs from $\phi_{\HL}$ 
by reversing and flipping the output lattice path.) 
Vaill{\'e} gives this example of his bijection 
$\omega$ in~\cite[Fig. 3, p. 121]{vaille}: 
\begin{align*}
         P&=\NE{NNEENNNNNEENNEENEEENNEENNEEENNEE},\\
 \omega(P)&=\NE{NENNENNENNNENNENEEENEEENENNEENEE}.
\end{align*}
The bounce path of $P$ is clearly visible in the left panel of that
figure, although here the bounce path moves from $(0,0)$ north and east 
to $(n,n)$ as a result of the reversal and flipping.

\subsection{The Bijection of Andrews et al.}
\label{subsec:andrews}

Andrews, Krattenthaler, Orsina, and Papi~\cite{AKOP-lie}
described a bijection mapping Dyck partitions to Dyck paths
that is essentially the inverse of $\sw_{1,-1}^{-}$. 
They give an example starting with an input partition
$\pi=(10,10,9,6,5,4,4,3,1,1,1,1,0)$ in~\cite[Fig. 2, p. 3841]{AKOP-lie}.
The word of this partition (after adding one more zero part at the end) is
\[ y=\mkwd(\pi)=\NE{NNENNNNEENENNENENEEENENNEEEE}. \]
This partition maps to the output Dyck path shown 
in~\cite[Fig. 3, p. 3846]{AKOP-lie}, which has word
\[ w=\NE{NENNNEEENNENNEENNNEENEEENNEE}. \]
One may check that $\sw_{1,-1}^-(w)=y$, and similarly for other objects, 
so these authors have inverted the sweep map on Dyck paths. Here too, 
Haglund's bounce path construction (this time proceeding from $(n,n)$ to 
$(0,0)$) is visible in Figure 2 of~\cite{AKOP-lie}.

\subsection{Inverting $\phi_{n,k,m}$ and $\phi'_{n,k,m}$.}
\label{subsec:invert-trapz}

Loehr describes $\phi_{n,0,m}$ and its inverse in~\cite{loehr-mcat}.
The maps $\phi_{n,k,m}$, $\phi'_{n,k,m}$, and their inverses are treated
in~\cite{loehr-trapz}. The key to inversion is defining the bounce
path for a trapezoidal lattice path $Q\in T_{n,k,m}$. This bounce path
starts at $(0,0)$ and moves north and east to $(k+nm,n)$. For $i\geq 0$,
the $i$-th bounce moves north $v_i$ steps from the current location as
far as possible without going strictly north of the path $Q$. The
$i$-th bounce continues by moving east $h_i=v_i+v_{i-1}+\cdots+v_{i-(m-1)}+s$
steps, where $v_j=0$ for $j<0$, $s=1$ for $0\leq i<k$, and $s=0$ for $i\geq k$.
One can show that if $Q$ is produced from $P$ via sweeping
(as described in Step 1 of the proof of Theorem~\ref{thm:trapz-vs-sweep}), 
then the steps of $Q$ located north of the $(i-1)$-th horizontal bounce move 
and west of the $i$-th vertical bounce move receive label $i$.
Thus, we can invert the sweep map in this case.

\subsection{Inverting $\phi_{\LW}$.} 
\label{subsec:invert-LW}

Loehr and Warrington describe the inverse of $\phi_{\LW}$ in~\cite{LW-square}
using the language of area vectors. Their result amounts to inverting the
sweep map $\sw_{1,-1}^-$ on the domain $\patR(\N^n\E^n)$ of lattice paths
in an $n\times n$ square. As usual, it suffices to discuss the construction 
of the ``square bounce path.'' Given $Q=\sw_{1,-1}^-(P)$ with
$P,Q\in\patR(\N^n\E^n)$, first choose the maximum integer $k$ such
that the path $Q$ touches the line $y=x-k$. Call this line the 
\emph{break diagonal}. The \emph{break point} of $Q$ is the lowest
point $(x,y)$ of $Q$ on the line $y=x-k$. The \emph{positive bounce path}
of $Q$ starts at $(n,n)$ and moves to the break point as follows.
First go south $k$ steps from $(n,n)$ to $(n,n-k)$; call this move $V_{-1}$.  
Repeat until reaching the break point, taking $i=0,1,2,\ldots$: In move $H_i$,
go west until blocked by the north end of a north step 
of $Q$; in move $V_i$, go south to the break diagonal. Next, the \emph{negative
bounce path} of $Q$ starts at $(0,0)$ and moves to the break point as follows.
First go east $k$ steps from $(0,0)$ to $(k,0)$; call this move $H_{-1}$.
Repeat until reaching the break point, taking $i=-2,-3,\ldots$:
In move $V_i$, go north to the first lattice point on $Q$; 
in move $H_i$, go east to the break diagonal.
(Since the negative bounce path is blocked by lattice points, not edges, on $Q$, this rule is not the reflection of the rule for the positive bounce path.)
One can check that when sweeping $P$ to produce $Q$,
the steps of $Q$ located north of move $H_i$ and west of move $V_{i-1}$
receive label $i$. It is now straightforward to invert $\phi_{\LW}$.

\subsection{Inverting $\phi_{\EHKK}$.}
\label{subsec:invert-EHKK}

Egge, Haglund, Killpatrick, and Kremer describe the inverse
of their map $\phi_{\EHKK}$ in~\cite[p. 15]{EHKK}. 
Given a Schr\"oder path $Q$, their algorithm 
to compute $P=\phi_{\EHKK}^{-1}(Q)$ 
begins by dividing the steps of $Q$ into regions based on
a version of the bounce path defined for Schr\"oder paths. 
This amounts to reconstructing the labels of the steps of $Q$ 
created when applying the sweep map to $P$.
They then reconstruct the area vector of $P$
(modified to allow diagonal steps) by an insertion process 
that reverses the action of the sweep map. This special case
of sweep inversion is notable because it inverts a sweep map
on a three-letter alphabet (although one of the letters has
weight zero).

\subsection{Inverting $\gmmap_{b,a}$.}
\label{subsec:invert-gorsky-mazin}

In~\cite{GM-jacII}, Gorsky and Mazin describe how to invert
special cases of their map $\gmmap_{b,a}$ using the language of semi-module
generators and bounce paths. Their results amount to inverting
$\sw_{b,-n}$ on the domain $\patD(\N^n\E^b)$ in the case where
$b=nm\pm 1$ for some $m$. The case $b=nm+1$ essentially duplicates the 
$m$-bounce path construction of~\cite{loehr-mcat} (although there is a lot
of new material relating this construction to semi-modules).
The case $b=nm-1$ is a new inversion result obtained via a 
modification of the $m$-bounce paths. Specifically, one constructs
the bounce path's vertical moves $v_i$ (for $i\geq 0$) as
described in~\S\ref{subsec:invert-trapz} above, but now
$h_i=v_i+v_{i-1}+\cdots+v_{i-(m-1)}+t$, where $t=-1$ for $i=m-1$
and $t=0$ for all other $i$.

\section{Area Statistics and Generalized $q,t$-Catalan Numbers}
\label{sec:area-qtcat}

This section applies the sweep map to provide new combinatorial
generalizations of the $q,t$-Catalan numbers~\cite{GH-qtcat} and the
$q,t$-square numbers~\cite{LW-square}.  We make several conjectures
regarding the joint symmetry of these polynomials and their connections
to the nabla operator $\nabla$ on symmetric functions
introduced by A.~Garsia and F.~Bergeron~\cite{nabla1,nabla2,nabla3}.

\subsection{Area Statistics}
\label{subsec:area-stat}

For any word $w\in\{\N,\E\}^*$, let $\area(w)$ be the number of pairs
$i<j$ with $w_i=\E$ and $w_j=\N$. This is the area of the partition
diagram $\mkptn(w)$ consisting of the squares above and to the left of steps 
in $\mkpath(w)$.  
For $r,s\in\Z$, let $w$ have $(r,s)$-levels 
$l_0,l_1,\ldots$ (E-N convention).  Let
$\ml_{r,s}(w)=\min\{l_0,l_1,\ldots\}$, and set
$\area^*_{r,s}(w)=\area(w)+\ml_{r,s}(w)$.  Note that
$\area^*_{r,s}(w)\neq\area^*_{rm,sm}(w)$ in general, so we cannot
necessarily assume that $\gcd(r,s)=1$ when using $\area^*$.

\begin{remark}
  The function $\ml_{b,-a}$ appears in~\cite{ALW-RPF} as $\ml_{b,a}$.
\end{remark}

\begin{remark}
  The correspondence between partitions and paths lying in a fixed triangle
  has led to inconsistent terminology: ``area'' can refer to
  either the number of squares lying in the partition determined by a
  path (as above) or the number of squares between the path and a
  diagonal (such as in~\cite{ALW-RPF}).
\end{remark}

\subsection{Generalized $q,t$-Catalan Polynomials}
\label{subsec:gen-qt-cat}

For $r,s\in\Z$ and $a,b\geq 0$, define the \textbf{$q,t$-Catalan
  numbers for slope $\boldsymbol{(-s/r)}$ ending at
  $\boldsymbol{(b,a)}$} by
\[ C_{r,s,a,b}(q,t)=\sum_{w\in\wdD_{r,s}(\N^a\E^b)}
  q^{\area(w)}t^{\area(\sw_{r,s}^-(w))}. \]

\begin{conj}[Joint Symmetry]~\label{conj:qt-joint}
  For all $r,s\in\Z$ and all $a,b\geq 0$, $C_{r,s,a,b}(q,t)=C_{r,s,a,b}(t,q)$.
\end{conj}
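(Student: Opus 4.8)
The symmetry asserted here is a sweeping generalization of the classical identity $C_n(q,t)=C_n(t,q)$ for the $q,t$-Catalan numbers, for which the only known proofs are algebraic, passing through the nabla operator and the theory of Macdonald polynomials; no bijective proof is known even in the base case ($r=1$, $s=-1$, $a=b=n$). I therefore would not expect a direct combinatorial attack to succeed in general, and the plan is to reduce the conjecture, at least in the tractable special cases, to a manifestly $q,t$-symmetric algebraic expression.

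First I would test the naive bijective route, mainly to see what it demands. One seeks an involution $\iota$ on $\wdD_{r,s}(\N^a\E^b)$ satisfying $\area(\iota(w))=\area(\sw^-_{r,s}(w))$ and $\area(\sw^-_{r,s}(\iota(w)))=\area(w)$, which would interchange the two statistics and yield $C_{r,s,a,b}(q,t)=C_{r,s,a,b}(t,q)$ term by term; equivalently, the bistatistic $(\area,\area\circ\sw^-_{r,s})$ should be invariant under swapping its coordinates. The total absence of such an involution classically is precisely why this remains open, so I would treat this route as a source of intuition rather than a realistic line of proof.

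The realistic plan is to use the symmetric-function framework developed in Section~\ref{sec:area-qtcat}. The key step is to establish an identity of the form
\[ C_{r,s,a,b}(q,t)=\langle \nabla^{k}(F_{r,s,a,b}),\, s_{(1^{n})}\rangle, \]
with the right-hand side a coefficient extracted from $\nabla$ applied to a symmetric function $F_{r,s,a,b}$ built from the parameters. In the coprime Dyck case $r=b$, $s=-a$ with $\gcd(a,b)=1$, the left side is a rational $q,t$-Catalan polynomial, and the target identity is a rational-slope analogue of the shuffle conjecture: the combinatorial side, whose two statistics are $\area$ and $\dinv=\area\circ\sw^-_{b,-a}$, should match a Hilbert-series extraction from the associated module. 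Granting such an identity, symmetry in $q$ and $t$ follows on the algebraic side from self-duality of the Hall inner product together with the known behavior of $\nabla$ and of the Macdonald parameters under the interchange $q\leftrightarrow t$.

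The main obstacle is twofold. For the cases admitting an algebraic model (chiefly the coprime Dyck case), one must first prove that $\area\circ\sw^-_{r,s}$ coincides with the statistic ($\dinv$, or its rational analogue) appearing algebraically; this is where the detailed combinatorics of the sweep map from Sections~\ref{sec:intro-sweep}--\ref{sec:alg-sweep} must be deployed, and it already presupposes bijectivity of $\sw^-_{r,s}$ on the relevant domain. More seriously, for general slopes, non-coprime dimensions, and non-Dyck endpoints there is at present no symmetric-function model at all, so the algebraic route yields no leverage; here one would need a genuinely new representation-theoretic construction or, failing that, the elusive combinatorial involution of the previous paragraph. I expect this general case to be the true barrier, which is why the statement is offered as a conjecture rather than a theorem.
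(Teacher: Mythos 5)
There is no proof to compare against: the statement is presented in the paper as Conjecture~\ref{conj:qt-joint} and is left open. Your proposal correctly recognizes this, and your diagnosis of the landscape matches what the paper itself says in Section~\ref{sec:area-qtcat}: bijectivity of $\sw^-_{r,s}$ on $\wdD_{r,s}(\N^a\E^b)$ (itself conjectural) would yield only the univariate symmetry $C_{r,s,a,b}(q,1)=C_{r,s,a,b}(1,q)$; the case $r=-s=1$, $a=b=n$ reduces via the Garsia--Haglund theorem to $\langle\nabla(e_n),s_{(1^n)}\rangle$, which is manifestly $q,t$-symmetric; and finding a nabla-type formula for the general $C_{r,s,a,b}(q,t)$ is explicitly flagged as an open problem. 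Two small corrections to your sketch: the paper's relation is $\area(\sw^-_{1,-1}(w))+\dinv(w)=n(n-1)/2$, so $\area\circ\sw^-_{1,-1}$ is the \emph{complement} of $\dinv$ rather than $\dinv$ itself (hence the substitution $(q,t)\mapsto(1/q,1/t)$ and the prefactor $(qt)^{n(n-1)/2}$ in the paper's identities); and the rational-slope polynomials of the cited companion paper use $\sw^+_{b,-a}\circ\rev$, so, as the authors note, that joint-symmetry conjecture is not literally a special case of this one. Neither point affects your overall (correct) conclusion that the general statement is out of reach by currently known methods.
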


Note that the conjectured bijectivity of $\sw_{r,s}^-$ on the domain
$\wdD_{r,s}(\N^a\E^b)$ would imply the weaker univariate symmetry
property $C_{r,s,a,b}(q,1)=C_{r,s,a,b}(1,q)$.

The rational $q,t$-Catalan polynomials defined in~\cite{ALW-RPF} arise from
a sweep map that, in the case $\gcd(a,b)=1$ considered in that paper,
reduces to $\sw^{+}_{b,-a}\circ\rev$.  As such, the joint symmetry
conjecture~\cite[Conj. 19]{ALW-RPF} is not quite a special case of
Conjecture~\ref{conj:qt-joint}.

Let $w\in\patD_{1,-1}(\N^n\E^n)$ be a ``classical'' Dyck path
with area vector $(g_1,\ldots,g_n)$ (see~\S\ref{subsubsec:trapz}).
Let $\Area(w)=g_1+\cdots+g_n$, which is the number of area 
squares between the path $w$ and the line $y=x$, and let
$\dinv(w)$ be the number of $i<j$ with $g_i-g_j\in\{0,1\}$.
The Garsia-Haiman $q,t$-Catalan numbers~\cite{GH-qtcat} can
be defined by the combinatorial formula
\[ C_n(q,t)=\sum_{w\in\patD_{1,-1}(\N^n\E^n)} q^{\Area(w)}t^{\dinv(w)}. \]
To relate this polynomial to the one defined above, note that
$\area(w)+\Area(w)=n(n-1)/2$. Similarly, it follows from
Theorem~\ref{thm:trapz-vs-sweep} and~\cite[\S2.5]{loehr-mcat} that
$\area(\sw_{1,-1}^-(w))+\dinv(w)=n(n-1)/2$. Therefore, 
\[ C_n(q,t)=(qt)^{n(n-1)/2}C_{1,-1,n,n}(1/q,1/t). \] 
Combining this with a theorem of Garsia and Haglund~\cite{nablaproof}, we get
\[ (qt)^{n(n-1)/2}C_{1,-1,n,n}(1/q,1/t)=\langle\nabla(e_n),s_{(1^n)}\rangle. \]
More generally, for any positive integers $m,n$, the higher-order
$q,t$-Catalan numbers~\cite{loehr-mcat,loehr-thesis} satisfy
\[ C_n^{(m)}(q,t)=(qt)^{mn(n-1)/2}C_{m,-1,n,mn}(1/q,1/t). \] 
The main conjecture for these polynomials can be stated as follows: 
\[ (qt)^{mn(n-1)/2}C_{m,-1,n,mn}(1/q,1/t)
  =\langle\nabla^m(e_n),s_{(1^n)}\rangle. \]
An interesting open problem is to find formulas relating the general 
polynomials $C_{r,s,a,b}(q,t)$ to nabla or related operators.

\subsection{Generalized $q,t$-Square Numbers}
\label{subsec:gen-qt-square}

Next we generalize the $q,t$-square numbers studied in~\cite{LW-square}.
For $a,b\geq 0$, define the \textbf{$\boldsymbol{q,t}$-rectangle numbers
for the $\boldsymbol{a\times b}$ rectangle} by
\[ S_{a,b}(q,t)=\sum_{w\in\wdR(\N^a\E^b)}  
   q^{\area^*_{b,-a}(w)}t^{\area^*_{b,-a}(\sw_{b,-a}^-(w))}. \]

\begin{conj}[Joint Symmetry]
For all $a,b$, $S_{a,b}(q,t)=S_{a,b}(t,q)$.
\end{conj}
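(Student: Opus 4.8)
The plan is to prove the symmetry by exhibiting an explicit bijection $\Phi$ on $\wdR(\N^a\E^b)$ that interchanges $\area^*_{b,-a}(w)$ and $\area^*_{b,-a}(\sw(w))$, where throughout I abbreviate $\sw=\sw^-_{b,-a}$. The cleanest way to manufacture such a $\Phi$ is a conjugation trick. Suppose one can produce an involution $\iota\colon\wdR(\N^a\E^b)\to\wdR(\N^a\E^b)$ with the two properties $\area^*_{b,-a}\circ\iota=\area^*_{b,-a}$ and $\iota\circ\sw\circ\iota=\sw^{-1}$. Then $\Phi=\iota\circ\sw$ swaps the statistics: the first property gives $\area^*_{b,-a}(\Phi(w))=\area^*_{b,-a}(\sw(w))$, while the identity $\sw\circ\iota=\iota\circ\sw^{-1}$ (a restatement of the second property) yields $\sw\circ\Phi=\sw\circ\iota\circ\sw=\iota$ and hence $\area^*_{b,-a}(\sw(\Phi(w)))=\area^*_{b,-a}(\iota(w))=\area^*_{b,-a}(w)$. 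The entire problem is thereby reduced to finding a single involution that preserves $\area^*_{b,-a}$ and conjugates the sweep map to its inverse.

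Toward constructing $\iota$, the first thing I would prove is a structural symmetry of the domain: the statistic $\area^*_{b,-a}$ is invariant under the cyclic rotation $\sigma\colon w_1w_2\cdots w_n\mapsto w_2\cdots w_nw_1$. This follows from a short case analysis. Every word of $\wdR(\N^a\E^b)$ has final level $0$, so the minimum of its levels past the first step is at most $0$; consequently, if $w_1=\N$ then $\area$ rises by $b$ while $\ml_{b,-a}$ drops by exactly $b$, and if $w_1=\E$ then $\area$ drops by $a$ while $\ml_{b,-a}$ rises by exactly $a$. In either case $\area^*_{b,-a}$ is unchanged, so the cyclic group $C_{a+b}$ acts on $\wdR(\N^a\E^b)$ with $\area^*_{b,-a}$ constant on orbits. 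When $\gcd(a,b)=1$ the cyclic level sequence takes distinct values, so each orbit has full size $a+b$ and a unique ``Dyck'' representative in $\wdD_{b,-a}(\N^a\E^b)$. A pivotal sub-question is then whether $\sw$ is equivariant for this $C_{a+b}$-action (perhaps after an overall level shift). If it is, then $\area^*_{b,-a}\circ\sw$ is also orbit-constant, both gradings descend to orbit representatives, and the joint symmetry of $S_{a,b}$ collapses to the joint symmetry of the Dyck polynomial $C_{b,-a,a,b}$ (the instance $r=b$, $s=-a$ of Conjecture~\ref{conj:qt-joint}); if it is not, cyclic invariance still organizes the $q$-grading by orbits and isolates the difficulty in how $\sw$ redistributes $\area^*_{b,-a}$ within a single orbit.

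Two sources of unconditional leverage are available. First, the square case $a=b$ is essentially known: the $q,t$-square numbers of Loehr and Warrington~\cite{LW-square} are jointly symmetric, and since $S_{a,b}$ is designed to generalize them, matching $\area^*_{n,-n}$ and $\area^*_{n,-n}\circ\sw$ with their area and bounce statistics should recover $S_{n,n}(q,t)=S_{n,n}(t,q)$ outright, furnishing a base case and a sanity check. Second, for $\gcd(a,b)=1$ the most powerful route is representation-theoretic: one would seek to identify $S_{a,b}$, or its Dyck restriction $C_{b,-a,a,b}$, with a pairing $\langle\,\cdot\,,s_{(1^n)}\rangle$ built from the nabla operator or a rational-shuffle symmetric function, whose $(q,t)$-symmetry is automatic from the algebra of the operators. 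This is exactly the mechanism by which the classical $C_n(q,t)$ of Section~\ref{subsec:gen-qt-cat} is shown symmetric, via the Garsia--Haglund theorem~\cite{nablaproof}; carrying it out here is precisely the open problem, flagged at the end of Section~\ref{subsec:gen-qt-cat}, of finding the operators that govern the general polynomials $C_{r,s,a,b}$.

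The main obstacle is the construction of $\iota$ --- or of any combinatorial statistic-swapping map --- in the generic case. I expect this to be genuinely hard for the same reason the classical $q,t$-Catalan joint symmetry has no known bijective proof: the sweep map entangles the two copies of $\area^*_{b,-a}$ in a way that resists elementary surgery. The difficulty is compounded by a foundational point: the whole framework presumes that $\sw^-_{b,-a}$ is a bijection of $\wdR(\N^a\E^b)$ --- an unresolved instance of Conjecture~\ref{conj:gen-sweep} --- so that $\sw^{-1}$ exists and $\area^*_{b,-a}\circ\sw$ is a bona fide statistic to be paired. The realistic program is therefore: prove the cyclic-invariance lemma and the coprime orbit structure unconditionally; settle the $a=b$ case through~\cite{LW-square}; decide the equivariance sub-question, ideally reducing the full-rectangle symmetry to the Dyck case of Conjecture~\ref{conj:qt-joint}; and then attack that Dyck case either combinatorially via the affine/abacus description of the sweep map or algebraically via a rational-nabla identity.
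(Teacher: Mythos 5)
The first thing to say is that the paper offers no proof of this statement: it is one of the paper's open conjectures, and the only thing the authors establish about it is the remark immediately following it, namely that the case $a=b$ holds because it follows from the stronger identity $(qt)^{n(n-1)/2}S_{n,n}(1/q,1/t)=2\langle(-1)^{n-1}\nabla(p_n),s_{(1^n)}\rangle$, conjectured in~\cite{LW-square} and proved in~\cite{CL-sqthm}. Read as a proof, your proposal therefore has the same status as the paper's treatment: it does not close the problem. That said, several of your intermediate claims are correct and verifiable. The conjugation trick is formally sound. The cyclic-invariance lemma is true: moving a leading $\N$ to the back raises $\area$ by $b$ and lowers $\ml_{b,-a}$ by exactly $b$ (the final level being $0$ guarantees that the minimum over the remaining levels is $\leq 0$, so the shift of the minimum is exact), and dually a leading $\E$ lowers $\area$ by $a$ and raises $\ml_{b,-a}$ by exactly $a$; hence $\area^*_{b,-a}$ is constant on cyclic orbits, and for $\gcd(a,b)=1$ the cycle lemma gives full orbits with unique Dyck representatives. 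Your identification of the $a=b$ base case with~\cite{LW-square} and~\cite{CL-sqthm} is exactly the paper's own remark.

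The gap is that every step that would actually establish the symmetry is deferred. The involution $\iota$ is never constructed. The equivariance of $\sw^-_{b,-a}$ under rotation is posed as a sub-question and not answered --- and it is doubtful as stated, since rotation shifts all levels by a constant while the sweep order $-1,-2,\ldots$ followed by $\ldots,1,0$ is anchored at level $0$, so the wrap-around point moves; without it, the proposed reduction to the $r=b$, $s=-a$ case of Conjecture~\ref{conj:qt-joint} does not go through (and that conjecture is itself open). The rational-nabla identification is explicitly the open problem flagged at the end of Section~\ref{subsec:gen-qt-cat}. Finally, the whole framework presupposes that $\sw^-_{b,-a}$ is a bijection of $\wdR(\N^a\E^b)$, an unresolved instance of Conjecture~\ref{conj:gen-sweep}. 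So the defect is not a wrong step but the absence of the decisive one: what you have is a correct reduction framework plus one verified lemma --- a reasonable research program for an open conjecture, not a proof of it.
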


The joint symmetry conjecture is known to hold when $a=b$.
This follows from the stronger statement 
\[ (qt)^{n(n-1)/2}S_{n,n}(1/q,1/t)
=2\langle (-1)^{n-1}\nabla(p_n),s_{(1^n)}\rangle, \]
which was conjectured in~\cite{LW-square} and proved
in~\cite{CL-sqthm}.  We conjecture the following more general
relationship between certain $q,t$-rectangle numbers and higher powers
of $\nabla$.

\begin{conj}
For all $m\geq 0$ and $n>0$,
\[ (qt)^{mn(n-1)/2}S_{n,mn}(1/q,1/t)
  =(-1)^{n-1}(m+1)\langle \nabla^m(p_n),s_{(1^n)}\rangle. \]
\end{conj}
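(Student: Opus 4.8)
The plan is to prove the identity by matching each side to a generating function over all lattice paths in the $n\times mn$ rectangle and then reconciling the two statistics via the sweep map, in the spirit of the known case $m=1$ from~\cite{CL-sqthm}. The sum $S_{n,mn}(q,t)$ already runs over $\wdR(\N^n\E^{mn})$ with the two exponents $\area^*_{mn,-n}(w)$ and $\area^*_{mn,-n}(\sw^{-}_{mn,-n}(w))$; since $\sw^{-}_{mn,-n}=\sw^{-}_{m,-1}$ depends only on the slope, this is precisely the ``$\area$-to-$\area$-after-sweep'' encoding of the zeta map that carries $\dinv$ to $\area$ to $\bounce$. First I would record the two extreme cases to calibrate the constant. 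For $m=0$ both sides equal $1$ (using $\langle p_n,s_{(1^n)}\rangle=(-1)^{n-1}$). For $n=1$ every word $\E^i\N\E^{m-i}$ has $\area=i$ and minimum level $-i$, so $\area^*=0$ identically and $S_{1,m}=m+1$, matching $(m+1)\langle\nabla^m(p_1),s_{(1^n)}\rangle=m+1$. These computations already exhibit the factor $m+1$ as a count of positions and pin down the normalization $(m+1)(-1)^{n-1}$.

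Next I would stratify $\wdR(\N^n\E^{mn})$ by the minimum level $\ml_{mn,-n}$. The words with $\ml=0$ are exactly $\wdD_{mn,-n}(\N^n\E^{mn})$, on which $\area^*=\area$; their contribution to $(qt)^{mn(n-1)/2}S_{n,mn}(1/q,1/t)$ is precisely $(qt)^{mn(n-1)/2}C_{m,-1,n,mn}(1/q,1/t)$, which the higher $q,t$-Catalan identity stated above equates to $\langle\nabla^m(e_n),s_{(1^n)}\rangle$. The remaining (non-Dyck) words I would organize by cyclic rotation: cutting a word at its first point of minimum level produces a Dyck word, and the fibers of this operation are the combinatorial source of both the power sum $p_n$ (a cyclic/necklace object rather than the linear object $e_n$) and the multiplicity $m+1$. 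The goal of this step is to show that summing $q^{\area^*}t^{\area^*\circ\sw^{-}}$ over each rotation class behaves in the appropriate invariant fashion, so that the rectangle sum is a clean symmetrization of the Dyck sum.

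On the symmetric-function side I would translate this stratification through the transition expansion $e_n=\frac{(-1)^{n-1}}{n}p_n+(\text{a combination of }p_\lambda\text{ with }\ell(\lambda)\ge 2)$ together with the identity $\langle\,\cdot\,,s_{(1^n)}\rangle=\langle\,\cdot\,,e_n\rangle$. Under $\langle\nabla^m(\,\cdot\,),s_{(1^n)}\rangle$, the terms indexed by $\lambda$ with $\ell(\lambda)\ge 2$ should correspond to rectangle paths that return to the main diagonal in the interior (the ``disconnected'' configurations), while the single part $\lambda=(n)$ governs the primitive paths; this is the exponential-formula mechanism by which the connected count is $p_n$ while the total is $e_n$. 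Matching this expansion against the rotation-class decomposition of the previous paragraph is what forces $e_n$ to be replaced by $p_n$ and simultaneously produces the prefactor $(m+1)(-1)^{n-1}$.

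The hard part will be the non-Dyck analysis: one must show that the pair of statistics $(\area^*,\area^*\circ\sw^{-})$ is genuinely compatible with cyclic rotation, which is essentially the statement that the sweep map commutes, up to controlled shifts, with rotation of the rectangle word. This is intimately tied to the still-open bijectivity of $\sw^{-}_{m,-1}$ on $\wdR(\N^n\E^{mn})$ (Conjecture~\ref{conj:gen-sweep}), a fact already needed for the weaker one-variable symmetry. A secondary obstacle is that the Dyck contribution rests on the higher $q,t$-Catalan identity $\langle\nabla^m(e_n),s_{(1^n)}\rangle=(qt)^{mn(n-1)/2}C_{m,-1,n,mn}(1/q,1/t)$, which in this framework is itself conjectural for general $m$; a complete argument would either import it from the Rational Shuffle Theorem or, preferably, generalize the direct method of~\cite{CL-sqthm} so that the $\nabla^m(p_n)$ identity is established in a single stroke, without first isolating the elementary-symmetric case.
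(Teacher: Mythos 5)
There is nothing in the paper to compare your argument against: this statement is one of the paper's open conjectures, not a theorem. The authors prove it in no case except by citation for $m=1$ (where $S_{n,n}$ reduces to the $q,t$-square numbers, conjectured in~\cite{LW-square} and proved in~\cite{CL-sqthm}); for general $m$ they offer no proof, sketch, or reduction. So the relevant question is whether your proposal closes the gap, and it does not. Your calibration checks at $m=0$ and $n=1$ are correct, and the Dyck stratum computation (words with $\ml_{mn,-n}=0$ contribute $C_{m,-1,n,mn}$, since $\area^*=\area$ there and $\sw^-$ preserves Dyck words by the Proposition in Section 3.2) is sound. But every load-bearing step beyond that is exactly one of the open problems in this circle of ideas, as you yourself concede: the compatibility of the pair $(\area^*_{mn,-n},\,\area^*_{mn,-n}\circ\sw^-)$ with cyclic rotation is unestablished and is the real content of the conjecture; the identity $\langle\nabla^m(e_n),s_{(1^n)}\rangle=(qt)^{mn(n-1)/2}C_{m,-1,n,mn}(1/q,1/t)$ that you invoke for the Dyck stratum is itself stated in Section 6.2 only as ``the main conjecture for these polynomials''; and the bijectivity of $\sw^-_{m,-1}$ on $\wdR(\N^n\E^{mn})$ is Conjecture 3.2(a). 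A proof conditional on three open conjectures is a reduction, not a proof, and even as a reduction the first step is not precise enough to evaluate.

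Two specific warnings about the rotation step. First, the cycle-lemma mechanism you describe behaves differently here than in the coprime setting: for the $n\times mn$ rectangle, $\gcd(n,mn)=n$, so the minimum level can be attained at several lattice points of a word, the $n(m+1)$ cyclic rotations do not act freely on the fibers, and the multiplicity of Dyck representatives in a rotation class is not constant. Extracting the clean factor $(m+1)$ and the replacement of $e_n$ by $p_n$ from this requires an argument, not just the exponential-formula analogy; note that the paper's formula has prefactor $(m+1)$, not $n(m+1)$, which already signals that the naive rotation count is the wrong bookkeeping. Second, there is no reason to expect $\sw^-$ to commute with rotation ``up to controlled shifts'': the sweep map's output order depends globally on the level sequence, and rotating the input reindexes all levels by an additive constant, which changes the sweep order of $k$'s in an essentially discontinuous way (compare the remark in Section 1 showing that changing the scan order destroys injectivity). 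If you want a tractable path forward, the authors' own suggestion is the better one: generalize the direct symmetric-function computation of~\cite{CL-sqthm} to $\nabla^m(p_n)$, rather than routing through rotation classes and the sweep map.
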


\subsection{Specialization at $t=1/q$}
\label{subsec:t=1/q}

Recall the definitions of $q$-integers, $q$-factorials,
and $q$-binomial coefficients: $[n]_q=1+q+q^2+\cdots+q^{n-1}$,
$[n]!_q=[n]_q[n-1]_q\cdots [2]_q[1]_q$, and
${\tqbin{a+b}{a,b}{q}=[a+b]!_q/([a]!_q[b]!_q)}$.

In~\cite[Conj. 21]{ALW-RPF}, the authors make the following conjecture for
coprime $a$ and $b$:
\begin{equation*}
  q^{(a-1)(b-1)/2} \sum_{D\in\wdD(\N^a\E^b)} q^{\area(\sw^{+}_{b,-a}(\rev(D)))-\area(D)}=\frac{1}{[a+b]_q}\dqbin{a+b}{a,b}{q}.  
\end{equation*}
We conjecture here that $\sw^{+}_{b,-a}\circ\rev$ can be replaced by
$\sw^{-}_{b,-a}$:
\begin{conj}
For all coprime $a,b>0$,
\begin{equation*}
  q^{(a-1)(b-1)/2} C_{b,-a,a,b}(q,1/q) = \frac{1}{[a+b]_q}\dqbin{a+b}{a,b}{q}.
\end{equation*}
\end{conj}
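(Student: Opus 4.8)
The plan is to reduce this $t=1/q$ specialization to the known evaluation of the rational $q,t$-Catalan polynomial at $t=1/q$, which has product form. First I would record the elementary identity
\[
\area(w) + \coarea(w) = \frac{(a-1)(b-1)}{2} \qquad \bigl(w \in \wdD_{b,-a}(\N^a\E^b)\bigr),
\]
where $\coarea(w)$ is the number of complete lattice cells lying between $\mkpath(w)$ and the diagonal $by=ax$. This follows from coprimality: the diagonal crosses exactly $a+b-1$ cells of the $b\times a$ rectangle, leaving $(a-1)(b-1)$ cells untouched and split evenly above and below, and every cell strictly above the diagonal is counted either by $\area$ (if it lies northwest of the path) or by $\coarea$ (if it lies between the path and the diagonal). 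By the proposition of Section~\ref{subsec:irrat-sweep}, $\sw^{-}_{b,-a}(w)$ is again a $(b,-a)$-Dyck word, so the identity applies to it as well. Substituting both instances converts the exponent and absorbs the prefactor, reducing the claim to
\[
q^{(a-1)(b-1)/2}\, C_{b,-a,a,b}(q,1/q)
= q^{(a-1)(b-1)/2} \sum_{w} q^{\coarea(\sw^{-}_{b,-a}(w)) - \coarea(w)}.
\]

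Next I would recognize the right-hand sum as a specialization of the rational $q,t$-Catalan polynomial. Writing $\widetilde{C}_{a,b}(q,t) = \sum_{w} q^{\coarea(w)}\, t^{\coarea(\sw^{-}_{b,-a}(w))}$, the displayed sum is precisely $\widetilde{C}_{a,b}(1/q,q)$. The crucial structural input is that the sweep map acts as a rational analogue of Haiman's zeta map: I would prove $\coarea(\sw^{-}_{b,-a}(w)) = \dinv(w)$, so that $\widetilde{C}_{a,b}$ coincides with the usual rational $q,t$-Catalan polynomial $\sum_w q^{\coarea(w)}\,t^{\dinv(w)}$. This is the two-variable refinement underlying Conjecture~\ref{conj:qt-joint}, and it is consonant with Theorem~\ref{thm:gm}, which already identifies a sweep variant with the Gorsky--Mazin map whose statistics are governed by semimodule data. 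Granting this identification, together with the known rational evaluation $\widetilde{C}_{a,b}(q,1/q) = q^{-(a-1)(b-1)/2}\,\tfrac{1}{[a+b]_q}\dqbin{a+b}{a,b}{q}$ (the rational analogue of the classical theorem $C_n(q,1/q) = q^{-n(n-1)/2}\,\tfrac{1}{[n+1]_q}\dqbin{2n}{n}{q}$) and the palindromicity of $\dqbin{a+b}{a,b}{q}$, one substitutes $q\mapsto 1/q$ to pass from $\widetilde C_{a,b}(q,1/q)$ to $\widetilde C_{a,b}(1/q,q)$ and obtains
\[
q^{(a-1)(b-1)/2}\,\widetilde{C}_{a,b}(1/q,q) = \frac{1}{[a+b]_q}\dqbin{a+b}{a,b}{q},
\]
which is the claim. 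Note that palindromicity lets me avoid invoking full joint symmetry; only the single $t=1/q$ evaluation is needed.

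The main obstacle is establishing, unconditionally, that the sweep map realizes the rational $\dinv$ statistic, i.e. that $\coarea(\sw^{-}_{b,-a}(w)) = \dinv(w)$. This is exactly the assertion that $\sw^{-}_{b,-a}$ is a genuine zeta map, and proving it seems entangled with the bijectivity of the sweep map --- the central open problem of the paper --- since it must control how sorting by level redistributes the cells counted by $\coarea$. I expect the cleanest attack to be combinatorial and local, adapting the event-sequence bookkeeping in the proof of Theorem~\ref{thm:trapz-vs-sweep} to track the area-between-diagonal statistic one level at a time. A self-contained alternative that sidesteps $\dinv$ would prove the generating-function identity directly via the $q$-cycle lemma: since $[a+b]_q\cdot\tfrac{1}{[a+b]_q}\dqbin{a+b}{a,b}{q} = \dqbin{a+b}{a,b}{q} = \sum_{u\in\wdR(\N^a\E^b)} q^{\area(u)}$, and the group $\mathbb{Z}/(a+b)$ acts freely on $\wdR(\N^a\E^b)$ with a unique Dyck representative per orbit (by coprimality), it would suffice to show that $(a-1)(b-1)/2 + \coarea(\sw^{-}_{b,-a}(w)) - \coarea(w)$ records the minimal value over the orbit of $w$ of an appropriate cyclic, major-index-type statistic whose values exhaust a full residue system (small cases show that $\area$ itself is not such a statistic). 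This reduction is appealing, but matching the sweep-based exponent to a cyclic statistic appears to be the same hard core in another guise.
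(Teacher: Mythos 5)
The first thing to note is that the paper offers no proof of this statement: it is stated as a conjecture, presented as a variant of \cite[Conj.~21]{ALW-RPF} in which $\sw^{+}_{b,-a}\circ\rev$ is replaced by $\sw^{-}_{b,-a}$, so there is no argument of the authors' to compare yours against. Your opening reduction is correct and worth recording: for coprime $a,b$ the diagonal $by=ax$ cuts exactly $a+b-1$ cells of the rectangle, the remaining $(a-1)(b-1)$ cells split evenly above and below it, and each uncut cell above the diagonal is counted by exactly one of $\area(w)$ and $\coarea(w)$, giving $\area(w)+\coarea(w)=(a-1)(b-1)/2$ for every $w\in\wdD_{b,-a}(\N^a\E^b)$; by the Proposition of Section~\ref{subsec:irrat-sweep} the same identity applies to $\sw^{-}_{b,-a}(w)$, and the resulting algebra (including the $q\mapsto 1/q$ manipulation of $[a+b]_q^{-1}\tqbin{a+b}{a,b}{q}$) checks out.

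However, the proposal is not a proof, and you candidly say so. It rests on two inputs, neither of which is available. First, the identity $\coarea(\sw^{-}_{b,-a}(w))=\dinv(w)$ is precisely the assertion that the negative sweep map is a rational zeta map; nothing in the paper establishes this (Theorem~\ref{thm:gm} identifies a \emph{different} variant, $\sw^{+}_{b,-a}\circ\rev$, with the Gorsky--Mazin map, and even there the relation to a rational $\dinv$ statistic is not proved), and as you observe it appears to be entangled with the open bijectivity problem. Second, the ``known rational evaluation'' $\widetilde C_{a,b}(q,1/q)=q^{-(a-1)(b-1)/2}[a+b]_q^{-1}\tqbin{a+b}{a,b}{q}$ is not a theorem you can cite within the scope of this paper and its references: the corresponding statement in \cite{ALW-RPF} is itself a conjecture. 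So the argument relocates the difficulty rather than resolving it --- you have shown the conjecture follows from two other open statements. The cycle-lemma alternative you sketch is, as you note, the same hard core in disguise: the obstruction is exactly that $\area(w)-\area(\sw^{-}_{b,-a}(w))$ has not been matched to a cyclic statistic whose values over a $\mathbb{Z}/(a+b)$-orbit form a complete residue system. The statement remains open.
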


We also introduce two conjectures regarding the $t=1/q$ specialization
of paths in a rectangle.  
\begin{conj}
For all $m,n\geq 0$,
\[ q^{m\binom{n}{2}}S_{n,mn}(q,1/q)
  =\frac{(m+1)}{[m+1]_{q^n}}\dqbin{mn+n}{mn,n}{q}. \] 
\end{conj}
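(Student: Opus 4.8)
The plan is to begin by simplifying the right-hand side. Using $[m+1]_{q^n}=[(m+1)n]_q/[n]_q$ together with the standard recursion $\tqbin{N}{k}{q}=\tfrac{[N]_q}{[k]_q}\tqbin{N-1}{k-1}{q}$, one checks the purely $q$-binomial identity
\[ \frac{(m+1)}{[m+1]_{q^n}}\,\dqbin{mn+n}{mn,n}{q}=(m+1)\,\dqbin{(m+1)n-1}{n-1}{q}. \]
Since by MacMahon's theorem $\dqbin{(m+1)n-1}{n-1}{q}=\sum_{w'\in\wdR(\N^{n-1}\E^{mn})}q^{\area(w')}$, the conjecture is equivalent to
\[ q^{m\binom{n}{2}}\sum_{w\in\wdR(\N^n\E^{mn})}q^{\area^*_{mn,-n}(w)-\area^*_{mn,-n}(\sw^-_{mn,-n}(w))}=(m+1)\sum_{w'\in\wdR(\N^{n-1}\E^{mn})}q^{\area(w')}. \]
The integer factor $(m+1)$, combined with the matching cardinalities $|\wdR(\N^n\E^{mn})|=(m+1)\,|\wdR(\N^{n-1}\E^{mn})|$, strongly suggests an $(m+1)$-to-one correspondence compatible with the statistics, and I would organize the proof around producing one.

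The main tool I would develop is a duality between $\area$ and the sum of step-levels. Writing $l_0,\dots,l_{a+b}$ for the $(b,-a)$-levels of $w\in\wdR(\N^a\E^b)$ under the E--N convention, summing $l_i=by_i-ax_i$ over the path and expressing $\sum_i x_i$, $\sum_i y_i$ through the positions of the steps yields the identity
\[ \sum_{i=0}^{a+b}l_i=\frac{ab(a+b)}{2}-(a+b)\,\area(w), \]
equivalently $\area(w)=\tfrac{ab}{2}-\tfrac{1}{a+b}\sum_i l_i$. Applying this to both $w$ and $v=\sw^-_{b,-a}(w)$ converts the area difference into a difference of level-sums,
\[ \area(w)-\area(v)=\frac{1}{a+b}\left(\sum_i l_i(v)-\sum_i l_i(w)\right), \]
and, since $\area^*=\area+\ml$, I would track the two min-level terms separately. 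For the slope at hand the levels are all multiples of $n$, so $\ml_{mn,-n}(w)=n\min_i(my_i-x_i)$; this prefactor $n$ is what ultimately interacts with the factor $(m+1)$.

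Crucially, the slope here is the integer $m$, which is exactly the regime in which the sweep map is known to be bijective and \emph{explicitly invertible}: by Theorem~\ref{thm:trapz-vs-sweep} and \S\ref{subsec:invert-trapz} (and, for the full rectangle, the square bounce path of \S\ref{subsec:invert-LW}), $\sw^-_{mn,-n}$ is governed by the $m$-bounce construction. I would use this to obtain a closed formula for the output statistics. Just as $\area(\sw^-_{1,-1}(w))=\binom{n}{2}-\dinv(w)$ classically, the bounce path should express $\area(\sw^-_{mn,-n}(w))$ as a maximal value minus an $m$-$\dinv$ statistic, and should pin down $\ml_{mn,-n}(\sw^-_{mn,-n}(w))$ in terms of the break diagonal of $w$. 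Substituting these, the exponent $m\binom{n}{2}+\area^*(w)-\area^*(\sw^-w)$ should collapse to a single, manifestly nonnegative statistic on $w$; summing $q$ to that statistic over all rectangle paths, I would then extract $(m+1)\dqbin{(m+1)n-1}{n-1}{q}$ either by a MacMahon-style map that strips off one north step (producing the multiplicity $m+1$) or by recognizing the sum as the cyclic-sieving $q$-count for the rotation-by-$n$ action of $\ZZ/(m+1)$ on $\wdR(\N^n\E^{mn})$.

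The \emph{hard part} will be the last step: extracting a usable closed form for $\area^*_{mn,-n}(\sw^-_{mn,-n}(w))$ valid for all rectangle paths, not merely Dyck paths. The within-level ordering produced by the right-to-left scan affects $\area$ of the output, so the bounce-path bookkeeping must be carried out carefully outside the Dyck region, where the $\ml$ correction is genuinely active; controlling $\ml_{mn,-n}(\sw^- w)$ is the most delicate point, since it is the term that breaks the naive ``$\area\leftrightarrow\dinv$'' symmetry. A secondary obstacle is isolating the precise origin of $(m+1)$: the rotation-by-$n$ action is not free (words of the form $u^{d}$ are fixed by subrotations), so a clean cyclic-sieving statement would require restricting to an aperiodic locus and treating periodic paths separately, or else replacing the group action by an explicit $(m+1)$-to-one map compatible with the collapsed statistic. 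The proven case $m=1$ from~\cite{CL-sqthm} should serve both as a base case and as a template for the bounce-path analysis.
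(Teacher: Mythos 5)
This statement is one of the paper's open conjectures: the paper supplies no proof of it, so there is nothing to compare your argument against, and the real question is whether your proposal closes the gap on its own. It does not. Your preliminary reductions are correct and worth keeping: the identity $\frac{(m+1)}{[m+1]_{q^n}}\dqbin{mn+n}{mn,n}{q}=(m+1)\dqbin{(m+1)n-1}{n-1}{q}$ checks out, as does the linear relation $\sum_i l_i=\frac{ab(a+b)}{2}-(a+b)\area(w)$ between the level-sum and $\area$ (I verified both on small cases and the latter is a routine computation). But everything after that is a research plan rather than a proof, and you say so yourself: the "hard part" of extracting a closed form for $\area^*_{mn,-n}(\sw^-_{mn,-n}(w))$ on the full rectangle, and the isolation of the factor $(m+1)$, are exactly the content of the conjecture and are not supplied.

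More importantly, one load-bearing claim is wrong as stated. You assert that integer slope $m$ is "exactly the regime in which the sweep map is known to be bijective and explicitly invertible," citing Theorem~\ref{thm:trapz-vs-sweep}, \S\ref{subsec:invert-trapz}, and \S\ref{subsec:invert-LW}. Those results cover (i) trapezoidal/Dyck-type domains, where paths stay weakly above a boundary line, for general $m$, and (ii) the \emph{full} rectangle only in the square case $m=1$ (the $\phi_{\LW}$ bounce path). The sum defining $S_{n,mn}(q,t)$ runs over all of $\wdR(\N^n\E^{mn})$, including paths crossing the diagonal, and for $m\geq 2$ on that domain the bijectivity and inversion of $\sw^-_{mn,-n}=\sw^-_{m,-1}$ is precisely the open Conjecture~\ref{conj:gen-sweep}; no bounce path is known there. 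So your strategy rests on an ingredient that is itself unproven, in addition to the unexecuted combinatorial analysis of $\ml$ and the $(m+1)$-to-one correspondence. The proposal is a reasonable attack plan (and the reductions could be recorded as lemmas), but it is not a proof.
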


This conjecture generalizes to arbitrary rectangles as follows:

\begin{conj}
For all $a,b\geq 0$, write $b=b'k$ and $a=a'k$ for integers $a',b',k\geq 0$
with $\gcd(a',b')=1$. Then
\[ q^{k(a'-1)(b'-1)/2 + a'b'\binom{k}{2}}S_{a,b}(q,1/q)=\frac{(a'+b')}{[a'+b']_{q^k}}
  \dqbin{a+b}{a,b}{q}. \] 
\end{conj}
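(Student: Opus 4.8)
The plan is to prove the formula by exploiting a cyclic symmetry of the statistic $\area^*_{b,-a}$, first settling the coprime case and then reducing the general case to it via an ``inflation'' identity. The crucial input is a cyclic invariance lemma: $\area^*_{b,-a}$ is unchanged when the first letter of a word is moved to the end. Indeed, if $w_1=\N$ then moving it past all $b$ copies of $\E$ increases $\area$ by $b$, while the leading level is $l_1=b>0\geq\ml_{b,-a}(w)$, so $\ml_{b,-a}$ decreases by exactly $b$; if $w_1=\E$ then $\area$ decreases by $a$ and, since $\ml_{b,-a}(w)\leq l_1=-a$, the minimum level increases by exactly $a$. In both cases $\area^*_{b,-a}=\area+\ml_{b,-a}$ is preserved, so $\area^*_{b,-a}$ is constant on each orbit of the rotation action of $\ZZ/(a+b)\ZZ$ on $\wdR(\N^a\E^b)$.

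In the coprime case $k=1$ the step levels $l_1,\dots,l_{a+b}$ are pairwise distinct (no proper sub-block of increments $+b,-a$ can sum to $0$), so $\sw^-_{b,-a}$ is determined purely by sorting steps according to the cyclic order in which the sweep visits levels (from $0^-$ down to $-\infty$, then from $+\infty$ down to $0$). Rotating $w$ shifts every level by the constant $-l_1$, and a constant shift is a cyclic-order-preserving bijection of the level circle $\ZZ\cup\{\infty\}$; hence $\sw^-_{b,-a}$ of a rotation of $w$ is a rotation of $\sw^-_{b,-a}(w)$. Combined with the invariance lemma applied to the output, this makes the exponent $\area^*_{b,-a}(w)-\area^*_{b,-a}(\sw^-_{b,-a}(w))$ constant on rotation orbits. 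By the cycle lemma each orbit has size $a+b$ and contains exactly one $(b,-a)$-Dyck word, on which $\ml_{b,-a}=0$ and $\area^*_{b,-a}=\area$; since $\sw^-_{b,-a}$ maps Dyck words to Dyck words, the exponent there equals $\area(w)-\area(\sw^-_{b,-a}(w))$. Summing over orbits gives $S_{a,b}(q,1/q)=(a+b)\,C_{b,-a,a,b}(q,1/q)$, which is precisely $(a+b)$ times the coprime $t=1/q$ conjecture stated above, establishing the formula when $k=1$.

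For general $k$ with $a=a'k$, $b=b'k$, $\gcd(a',b')=1$, and $n=a'+b'$, the reduction I would carry out is to the coprime case via an inflation identity. Because $(b,-a)$-levels are exactly $k$ times $(b',-a')$-levels, $\sw^-_{b,-a}$ sorts steps first by the reduced level $\bar l_i=l_i/k$ (the coprime sweep pattern) and then, within each tied class of equal reduced level, from right to left. Granting the coprime conjecture to evaluate $S_{a',b'}(q^k,1/q^k)$, the target formula is equivalent to
\[ S_{a,b}(q,1/q)=q^{-a'b'\binom{k}{2}}\,\frac{\dqbin{nk}{a'k,b'k}{q}}{\dqbin{n}{a',b'}{q^k}}\,S_{a',b'}(q^k,1/q^k), \]
where the quotient of Gaussian binomials is the $q$-Lucas factor recording the internal orderings inside the tied classes. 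The combinatorial content I would aim to prove is a fibration $w\mapsto(\bar w,\text{internal data})$ of $\wdR(\N^{a}\E^{b})$ over $\wdR(\N^{a'}\E^{b'})$ under which the exponent decomposes as $k$ times the reduced exponent plus an internal term, with the internal terms summing (after the normalizing power $q^{-a'b'\binom{k}{2}}$) to exactly the $q$-Lucas factor.

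The main obstacle is precisely the non-coprime case: when $k>1$ the step levels collide, and the within-class right-to-left tie-break is \emph{not} rotation-equivariant. Already for $a=b=2$ one checks that the exponent fails to be constant on a rotation orbit (it takes values $-1,0,0,-1$ on the orbit of $\NE{NNEE}$), so the clean orbit argument of the coprime case breaks down even though the final formula still holds. The real work is therefore to control $\sw^-_{b,-a}$ on a tied reduced-level class and to prove the statistic decomposition underlying the inflation identity — that the internal orderings contribute exactly the $q$-Lucas factor while the reduced data is governed by the coprime sweep run at $q^k$. I expect this step, rather than the cyclic-symmetry input, to be where the genuine difficulty lies.
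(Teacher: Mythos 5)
This statement is one of the paper's open conjectures; the paper supplies no proof of it, so there is nothing to compare your argument against, and the question is simply whether your argument closes the conjecture. It does not, for two separate reasons. First, your coprime case is circular relative to the paper: after the (correct, and genuinely useful) rotation-invariance lemma for $\area^*_{b,-a}$, the equivariance of $\sw^-_{b,-a}$ under rotation when all levels are distinct, and the cycle lemma, you arrive at $S_{a,b}(q,1/q)=(a+b)\,C_{b,-a,a,b}(q,1/q)$, and then invoke the identity $q^{(a-1)(b-1)/2}C_{b,-a,a,b}(q,1/q)=\frac{1}{[a+b]_q}\tqbin{a+b}{a,b}{q}$. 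But that identity is itself an unproven conjecture in this paper (Section~\ref{subsec:t=1/q}, the $\sw^-$ analogue of~\cite[Conj.~21]{ALW-RPF}), so you have reduced one conjecture to another rather than proved either. The reduction is a nontrivial and correct observation --- it shows the two $t=1/q$ conjectures are equivalent when $\gcd(a,b)=1$ --- but it is not a proof.

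Second, for $k>1$ the heart of the matter is exactly the ``inflation identity'' that you state but do not establish: the claimed fibration of $\wdR(\N^{a}\E^{b})$ over $\wdR(\N^{a'}\E^{b'})$ under which the exponent $\area^*_{b,-a}(w)-\area^*_{b,-a}(\sw^-_{b,-a}(w))$ splits as $k$ times the reduced exponent plus an internal contribution producing the $q$-Lucas factor $\dqbin{nk}{a'k,b'k}{q}\big/\dqbin{n}{a',b'}{q^k}$ after renormalizing by $q^{-a'b'\binom{k}{2}}$. You correctly diagnose why the orbit argument fails here (your $a=b=2$ computation, where the exponent takes values $-1,0,0,-1$ on the orbit of $\NE{NNEE}$, is right), and you correctly identify this decomposition as the real difficulty --- but identifying the difficulty is not the same as resolving it. Nothing in the proposal controls how $\sw^-_{b,-a}$ acts on a class of steps with tied reduced level, and without that the claimed factorization of the generating function is unsupported. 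As it stands the proposal is a conditional reduction plus a program, not a proof.
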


\begin{remark}
  A Sage worksheet containing code to compute the images of paths
  under various versions of the sweep map as well as to check the
  conjectures of Section~\ref{sec:area-qtcat} can be found at the
  third author's web page~\cite{sage-worksheet}.
\end{remark}

\section*{Acknowledgments}

The authors gratefully acknowledge discussions with Jim Haglund, Mark
Haiman and Michelle Wachs.


\end{document}